\soulregister\cite{7}
\soulregister\ref{7}
\newcommand{\R}{\mathbb R}
\newcommand{\s}{{\mathcal S}}
\newcommand{\Z}{\mathbb Z}
\newcommand{\T}{\mathbb T}
\newcommand{\N}{\mathbb N}
\numberwithin{equation}{section}
\newtheorem{theorem}{Theorem}[section]
\newtheorem{proposition}[theorem]{Proposition}
\newtheorem{remark}[theorem]{Remark}
\newtheorem{lemma}[theorem]{Lemma}
\newtheorem*{TA}{Theorem A}
\begin{document}
\title[Well-posedeness]{Well-posedness for a higher order water wave model on modulation spaces}

\author{X. Carvajal}
\address{Instituto de Matem\'atica, UFRJ, 21941-909, Rio de Janeiro, RJ, Brazil}
\email{carvajal@im.ufrj.br}
\author{M. Panthee}
\address{Department of Mathematics, IMECC-UNICAMP\\
13083-859, Campinas, S\~ao Paulo, SP,  Brazil}
\email{mpanthee@unicamp.br}


\keywords{Nonlinear dispersive wave equations, Water wave models, KdV equation, BBM equation, Initial value problems, Modulation spaces, Fourier-Lebesgue spaces, Well-posedness}
\subjclass[2010]{35A01, 35Q53}
\begin{abstract}
 Considered in this work is the initial value problem (IVP) associated  to  a higher order water wave model
 \begin{equation*}
 \begin{cases}
\eta_t+\eta_x-\gamma_1 \eta_{xxt}+\gamma_2\eta_{xxx}+\delta_1 \eta_{xxxxt}+\delta_2\eta_{xxxxx}+\frac{3}{2}\eta \eta_x+\gamma (\eta^2)_{xxx}-\frac{7}{48}(\eta_x^2)_x-\frac{1}{8}(\eta^3)_x=0,\\
\eta(x,0) = \eta_0(x).
\end{cases}
\end{equation*}
The main interest is in addressing the well-posedness issues of the IVP when the given initial data are considered in the modulation  space $M_s^{2,p}(\R)$ or 
the $L^p$-based Sobolev spaces $H^{s,p}(\R)$, $1\leq p<\infty$. We derive some multilinear estimates in these spaces and prove that the above IVP is locally well-posed for data in $M_s^{2,p}(\R)$ whenever $s>1$ and $p\geq 1$, and in $H^{s,p}(\R)$ whenever $p\in [1,\infty)$ and $s\geq \max\left\{ \frac1{p}+\frac12, 1 \right\}$.  We also use a combination of high-low frequency technique and an {\em a priori estimate}, and prove that the local solution with data  in the modulation spaces $M_s^{2,p}(\R)$  can be extended globally to the time interval $[0, T]$ for any given $T\gg1$ if $1\leq \frac32-\frac1p <s<2$ or if $(s,p)\in [2, \infty]\times  [2, \infty]$.

\end{abstract}

\maketitle

\section{Introduction}

In this work we are interested in studying the well-posedness issues for the following  initial value problem (IVP) associated  to  a higher order water wave model
\begin{equation}\label{5kdvbbm}
\begin{cases}
\eta_t+\eta_x-\gamma_1 \eta_{xxt}+\gamma_2\eta_{xxx}+\delta_1 \eta_{xxxxt}+\delta_2\eta_{xxxxx}+\frac{3}{2}\eta \eta_x+\gamma (\eta^2)_{xxx}-\frac{7}{48}(\eta_x^2)_x-\frac{1}{8}(\eta^3)_x=0,   \\
\eta(x,0)=\eta_0(x),
\end{cases} 
\end{equation}
with given data in some function spaces which are defined via scales other than the usual $L^2$-based Sobolev spaces, viz.,  the modulation  space $M_s^{2,p}(\R)$ or 
the $L^p$-based Sobolev spaces $H^{s,p}$, $1\leq p<\infty$. 

 This higher order water wave model which possesses structure of the fifth order Korteweg-de Vries (KdV) equation  and  the Benjamin Bona Mahony (BBM) equation was proposed in \cite{BCPS1} to describe the unidirectional propagation of water waves. The authors in \cite{BCPS1} obtained this model, also known in the literature as the fifth-order KdV-BBM equation, using the second order Taylor approximation in the two-way model, the so-called $abcd-$system, introduced in \cite{BCS1, BCS2}. The coefficient in \eqref{5kdvbbm} are not arbitrary, which are  given by the following relations
\begin{equation*}
\gamma_1=\frac{1}{2}(b+d-\rho),\qquad 
\gamma_2=\frac{1}{2}(a+c+\rho), 
\end{equation*}
with $\rho = b+d-\frac16$, and 
\begin{equation*}
\begin{cases}
\delta_1=\frac{1}{4}\,[2(b_1+d_1)-(b-d+\rho)(\frac{1}{6}-a-d)-d(c-a+\rho)], \\
\delta_2 =\frac{1}{4}\,[2(a_1+c_1)-(c-a+\rho)(\frac{1}{6}-a)+\frac{1}{3}\rho], 
\gamma =\frac{1}{24}[5-9(b+d)+9\rho],
\end{cases}
\end{equation*}
where $a, b, c, d, a_1, b_1, c_1$ and $d_1$ are modelling parameter satisfying $a+b+c+d=\frac{1}{3}$, $\gamma_1+\gamma_2=\frac{1}{6}$, $\gamma=\frac{1}{24}(5-18\gamma_1)$ and $\delta_2-\delta_1=\frac{19}{360}-\frac{1}{6}\gamma_1$ with $\delta_1>0$ and $\gamma_1>0$. It is interesting to note that the higher order wather wave model \eqref{5kdvbbm}   possesses hamiltonian structure  when the coefficient $\gamma = \frac7{48}$. In this case, the following quantity 
\begin{equation}\label{energy}
 E(\eta(\cdot,t)):=  \frac12\int_{\mathbb{R}} \eta^2 + \gamma_1 (\eta_x)^2+\delta_1(\eta_{xx})^2\, dx= E(\eta_0),
\end{equation}  
remains invariant in time.

 The well-posedness issues for the IVP \eqref{5kdvbbm}  with initial data in the classical $L^2$-based Sobolev spaces $H^s(\R)$ have attracted attention of several mathematicians in recent time. While introducing this model the  authors in \cite{BCPS1} proved that the IVP \eqref{5kdvbbm} is locally well-posed for initial data in $H^s(\R)$ when  $s\geq 1$. The authors in \cite{BCPS1} also proved that the local solution can be extended globally in time when the initial data possesses Sobolev regularity $s\geq \frac32$. More precisely,  they used the energy conservation law \eqref{energy}  to obtained an {\em a priori} estimate in $H^2(\R)$  to prove the local solution can be extended globally in time for  data in $H^s(\R)$, $s\geq 2$. While, for data with  regularity $ \frac32\leq s< 2$, {\em splitting to high-low frequency parts} technique introduced in \cite{B1, B} (see also \cite{BS}) was used.  Further, this technique of  splitting initial data to high-low frequency parts was used in \cite{CP} more efficiently to prove that the IVP \eqref{5kdvbbm} is globally well-posed   for initial data with Sobolev regularity $s\geq 1$. More precisely, they proved the following result.
 
\begin{TA}  Assume $\gamma_1, \delta_1 > 0$.  
 Let $1\leq s< 2$ and $\gamma=\frac7{48}$.  Then  for any given $T>0$, the local solution to the IVP \eqref{5kdvbbm}   can be extended to the  time interval $[0, T]$.  Hence the IVP \eqref{5kdvbbm} is globally well-posed in this case. In addition if $\eta_0 \in H^s$, one also has
\begin{equation*}
\eta(t) -S(t)\eta_0 \in H^2, \quad \textrm{for all time}\; t\in [0, T]
\end{equation*}
and
\begin{equation}\label{Norm2}
\sup_{t \in [0,T]}\|\eta(t) -S(t)\eta_0\|_{H^2} \lesssim (1+T)^{2-s},
\end{equation}
where $S(t)$ is the unitary group associated to the linear problem as defined in \eqref{St} below.
\end{TA}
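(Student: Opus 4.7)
The plan is to employ Bourgain's high-low frequency splitting technique together with the conservation law \eqref{energy}, which is available precisely because $\gamma = 7/48$. Fix $T > 0$ and $\eta_0 \in H^s(\R)$ with $1\leq s < 2$. Using Littlewood--Paley projectors, decompose $\eta_0 = \phi_0 + \psi_0$ with $\phi_0 = P_{\leq N}\eta_0$ and $\psi_0 = P_{>N}\eta_0$, so that $\|\phi_0\|_{H^2} \lesssim N^{2-s}\|\eta_0\|_{H^s}$ while $\|\psi_0\|_{H^1} \lesssim N^{-(s-1)}\|\eta_0\|_{H^s}$; the threshold $N=N(T)$ is chosen at the end. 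Since $\phi_0 \in H^2$, the existing global $H^2$-theory from \cite{BCPS1}, combined with \eqref{energy}, produces a global solution $\phi \in C(\R; H^2)$ to \eqref{5kdvbbm} with $\phi(0)=\phi_0$ and $\sup_{t\geq 0}\|\phi(t)\|_{H^2} \lesssim N^{2-s}$.

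Next, I seek the solution with data $\eta_0$ in the form $\eta(t) = \phi(t) + S(t)\psi_0 + z(t)$ with $z(0)=0$. Subtracting the equation for $\phi$ and the linear equation satisfied by $S(t)\psi_0$, and inverting the Fourier multiplier $1 + \gamma_1 \xi^2 + \delta_1 \xi^4$ arising from the time-derivative side of \eqref{5kdvbbm}, the Duhamel formulation for $z$ reads
\begin{equation*}
z(t) = \int_0^t S(t-\tau)\,\mathcal{N}\bigl(\phi(\tau),\, S(\tau)\psi_0,\, z(\tau)\bigr)\,d\tau,
\end{equation*}
where $\mathcal{N}$ is a sum of quadratic and cubic expressions in its three arguments, each term divided by $1 + \gamma_1 \xi^2 + \delta_1 \xi^4$. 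Using the bilinear and trilinear estimates developed for this model (those that yield the local theory at regularity $s\geq 1$, cf.\ \cite{BCPS1, CP}), together with the four-derivative gain supplied by the denominator, I would establish on a short interval $[0,\delta]$ the multilinear $H^2$-estimate
\begin{equation*}
\sup_{t \in [0, \delta]} \|z(t)\|_{H^2} \lesssim \delta \cdot P\bigl(\|\phi\|_{L^\infty_t H^2},\, \|\psi_0\|_{H^1},\, \|z\|_{L^\infty_t H^2}\bigr),
\end{equation*}
for some polynomial $P$, with $\delta$ depending only on $\|\phi_0\|_{H^2}$. A standard contraction then produces a small $H^2$-solution $z$ on $[0,\delta]$.

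To extend to $[0,T]$, I iterate: at the end of each subinterval, resplit the new state $\eta(k\delta)$ into fresh low/high parts, absorbing the $H^2$-error $z(k\delta)$ into the new low-frequency piece (whose $H^2$-norm thus grows by at most $O(\|z(k\delta)\|_{H^2})$ per step). After $\sim T/\delta$ iterations, the accumulated growth of $\|z(t)\|_{H^2}$ is balanced against the loss $N^{2-s}$ from the low-frequency energy and the gain $N^{-(s-1)}$ from the high-frequency smallness; optimizing $N=N(T)$ yields
\begin{equation*}
\sup_{t\in[0,T]}\|z(t)\|_{H^2} \lesssim (1+T)^{2-s}.
\end{equation*}
Since $\eta(t) - S(t)\eta_0 = \bigl[\phi(t) - S(t)\phi_0\bigr] + z(t)$, with the first bracket in $H^2$ by the global $H^2$-theory for $\phi$ and the second in $H^2$ by the above, \eqref{Norm2} follows.

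The main obstacle is the multilinear $H^2$-smoothing estimate for $z$: one must verify that the derivative losses in the quasilinear term $(\eta_x^2)_x$ and in the cubic $(\eta^3)_x$ --- especially when several factors are copies of $S(\tau)\psi_0 \in H^s$ with $s$ close to $1$ --- are strictly dominated by the four-derivative smoothing from the BBM-type denominator $1 + \gamma_1 \xi^2 + \delta_1 \xi^4$. Obtaining the sharp exponent $2-s$ in \eqref{Norm2} further requires a delicate tracking of the $N$-dependence through the iteration and an optimal choice $N=N(T)$ in the high-low split.
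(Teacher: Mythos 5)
Your proposal is correct and follows essentially the same route as the argument behind this statement (which the paper quotes from \cite{CP} and mirrors in Section \ref{sec-4}): a Bourgain-type high-low splitting in which the low-frequency part is evolved nonlinearly and controlled uniformly via the conserved energy \eqref{energy}, the high-frequency part contributes its free evolution $S(t)\psi_0$ plus a smoother $H^2$ remainder (your $z$ is the paper's $h$ in $v=S(t)v_0+h$), and one iterates, reabsorbing the remainder into the new low-frequency datum and choosing $N=N(T)$ to balance the number of steps against the per-step energy increment. The only difference is cosmetic: you write the ansatz $\eta=\phi+S(t)\psi_0+z$ directly, whereas the paper first evolves the high part by the difference equation with nonlinearity $F(u+v)-F(u)$ and then peels off its linear part, and both treatments defer the key $H^2$-smoothing bound on the remainder to the same multilinear estimates (Lemma 2.6 of \cite{CP}).
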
 
  Furthermore, the authors in \cite{CP} also proved that the well-posedness result obtained in  \cite{BCPS1}  is sharp by showing  that the application that takes initial data  in $H^s(\R)$ to the solution fails to be continuous at the origin for $s<1$. The IVP \eqref{5kdvbbm}  posed on domains other than $\R$ is also considered in the literature. We refer \cite{CPP-1} for the similar well-posedness results of the IVP \eqref{5kdvbbm} when posed on the periodic domain and \cite{hongqiu} when posed on quarter plane. 
 
 Study of other qualitative properties of the solution to the IVP \eqref{5kdvbbm} has also attracted attention in the literature. Recently, the authors in \cite{CP-22} proved that the regularity in the initial data of the IVP \eqref{5kdvbbm} propagates in the solution; in
other words, no singularities can appear or disappear in the solution to this model. They also considered the IVP \eqref{5kdvbbm} in the space of the analytic functions, the so-called Gevrey class and proved  the local well-posedness results for data in such spaces.   They also studied  the evolution of radius of analyticity in such class by providing explicit formulas for upper and lower bounds. Quite recently, the authors in \cite{BTT-23} obtained a better lower bound for the evolution of radius of analyticity of the solution  to the IVP \eqref{5kdvbbm}. More precisely, the authors in \cite{BTT-23} showed that the uniform radius of spatial analyticity $\sigma(t)$ of the solution to the IVP \eqref{5kdvbbm} at time $t$ cannot decay
faster than $\frac{c}t$, $c>0$, for large $t>0$ for given initial data that is analytic with fixed
radius $\sigma_0$. Finally, for further theory on model in \eqref{5kdvbbm} we refer to \cite{BCG}.

As discussed in the previous paragraph, so far in the existing literature, the well-posedness issues of the IVP \eqref{5kdvbbm} are explored considering the initial data  in the classical $L^2$-based Sobolev spaces $H^s$ of order $s\in \R$ with norm
$$
\|f\|_{H^s}^2 = \int_{\R}\langle\xi\rangle^{2s}|\widehat f(\xi)|^2\,d\xi,
$$
where  $\langle\cdot\rangle = 1+|\cdot|$ and $\widehat f$ denotes the Fourier transform of $f$ defined by
\begin{equation*}
\widehat{f}(\xi) =
\frac{1}{\sqrt{2\pi}}\int_{\mathbb{R}}e^{-ix\xi}f(x)\,dx.
\end{equation*}

In recent time, well-posedness issues of the IVPs associated to the nonlinear dispersive equations have been studied in some other scales of the function spaces than the usual $L^2$ based Sobolev spaces $H^s(\R)$. Most commonly used such spaces are the Fourier-Lebesgue spaces $\mathcal{F}L^{s,p}(\R)$ with norm 
 $$
 \|u\|_{\mathcal{F}L^{s,p}} =\|\langle \xi\rangle^s \widehat{u}(\xi)\|_{L^p},
 $$
  modulation spaces  $M_s^{r,p}(\R)$  with norm given by \eqref{def-2} (see Section \ref{sec-2} below) and the $L^p$-based Sobolev spaces of order $s$,  $H^{s,p}$, with norm
\begin{equation}\label{HspNorm}
\|f\|_{H^{s,p}}^p = \int_{\R}|(\Lambda^s f)(\xi)|^p\,d\xi,
\end{equation}
where the $\Lambda^s$ is the multiplier operator with symbol $(1+|\xi|^2)^{s/2}$ i.e.
$$
\widehat{\Lambda^s f}(\xi)=(1+|\xi|^2)^{s/2} \widehat{f}(\xi).
$$
By Plancherel's identity we have $\|f\|_{H^{s,2}}\sim \|f\|_{H^s}$,  and when
 $r=2$ $$\mathcal{F}L^{s,p}(\R)\subset M_s^{2,p}(\R).$$
  
  Taking in consideration the discussion above, our main interest in this work is to investigate the well-posedness issues for the IVP \eqref{5kdvbbm} with given data in the modulation spaces  $M_s^{r,p}(\R)$ and $L^p$-based Sobolev spaces $H^{s,p}$ of order $s$ considering appropriate values of $r$ and $p$.

  As a motivation, in what follows, we mention some results in this direction from the existing literature. We start by  mentioning  the local well-posedness result obtained in \cite{Gr-04} for the modified Korteweg-de Vries (mKdV) equation for given data in $\mathcal{F}L^{s,p}(\R)$ for $s\geq \frac1{2p}$ with $2\leq p<4$  and its improvement proved in \cite{GV-09} for the same range of $s$ with $2\leq p <\infty$. Recently, the authors in \cite{Oh-Wang}  investigated the IVP associated to the mKdV equation in the complex case and proved the local well-posedness result for given data in the modulation spaces  $M_s^{2,p}(\R)$ whenever $s\geq \frac14$ and $2\leq p <\infty$. Quite recently, the same authors in \cite{OW-20} proved that the cubic nonlinear Schr\"odinger (NLS) equation is globally well-posed in $M^{2,p}(\R)$ for any $1\leq p<\infty$ and the normalized cubic NLS is globally well-posed in $\mathcal{F}L^p(\T)$ for any $1\leq p<\infty$ by introducing a new function space $H\!M^{\theta,p}$ whose norm is given by the $\ell^p$-sum of the modulated $H^{\theta}$-norm of a given function that agrees with the modulation space $M^{2,p}(\R)$ on the real line and Fourier-Lebesgue space $\mathcal{F}L^p(\T)$ on the circle.   Also, we mention a very recent work of the authors in \cite{CP-22a}, where the extended nonlinear Schr\"odinger (e-NLS) equation and the higher order nonlinear Schr\"odinger (h-NLS)  equation were considered and proved to be locally well-posed for given data in  the modulation space $M_s^{2,p}(\R)$ respectively for $s>-\frac14$ and $s\geq\frac14$ with $2\leq p<\infty$. Further, recent works \cite{BV-21} and \cite{BV-20} that respectively deal with  the BBM equation and Boussinesq equation in the modulation spaces are worth mentioning.


Now we present the structure of this work. In Section
 \ref{sec-2},  we introduce the function spaces, their properties  and state the main results of this work. Section \ref{sec-3} is devoted to derive multilinear estimates in modulation spaces and prove the well-posedness results for data in this space. We prove global well-posedness result for such data in Section \ref{sec-4}.  Section \ref{sec-5} is devoted to provide the local well-posedness result for data in the $L^p$-bases Sobolev  spaces. Finally, in Section \ref{sec-6} we report the existence of solitary wave solution in the explicit form under certain condition on the parameters appearing in the model.  We finish this section recording some principal notations that will be used throughout this work.\\

\noindent
{\textbf{Notations:}} We will use standard notations of the PDEs 
throughout this work.  We use $C$ or $c$ to denote constant whose value may differ from one line to the next. We write $A\lesssim B$
if there exists a constant $c >0$ such that $ A \leq cB$, we also write $A \sim B$ if $A\lesssim B$ and $B\lesssim A$... etc.

\section{Function spaces and statement of the main results}\label{sec-2} 

In this section we introduce the function spaces on which we are interested to work in and the main results of this work.

\subsection{Function Spaces}

We already described the $L^p$-based Sobolev space  $H^{s,p}(\R)$, $s\in\R$, $p\in[1, \infty)$, in the Introduction with the norm given by \eqref{HspNorm}. In what follows, we describe the modulation spaces first introduced in 1983 by H. G. Feichtinger \cite{Feich-83} as part of an attempt to define smoothness spaces over locally compact Abelian groups. It is worth noticing that  H. G. Feichtinger  also proposed a theory that parallels that of the better known Besov spaces, and the modulation spaces have been  a motivation for the more general theory of coorbit spaces \cite{Feich-1, Feich-2, Feich-88}. An expository paper by the same author  \cite{F-06}  provides a historical perspective on what led to the invention of modulation spaces. In this work, we will use modulation space defined via equivalent norm as described in \cite{Oh-Wang}. Let  $\sigma\in \mathcal{S}(\R)$ be such that
$${\mathrm{supp}}\, \sigma\subset[-1, 1], \qquad \sum_{k\in\Z}\sigma(\xi-k)=1,$$
and, for given $n\in \Z$, $\Pi_n$ be a Fourier multiplier operator with symbol given by
$$\sigma_n(\xi):=\sigma(\xi-n),$$
i.e., 
\begin{equation}\label{Pn}
\widehat{\Pi_n f}(\xi):=\sigma_n(\xi)\widehat{f}(\xi).
\end{equation}
For given $s\in \R$ and  $1\leq r, p\leq\infty$,  we define modulation spaces 
    $M_s^{r,p}(\R)$ as follows  
    \begin{equation}
    \label{def-1}
      M_s^{r,p}(\R):=\{f\in \mathcal{S}'(\R) : \|f\|_{M_s^{r,p}}<\infty\},
      \end{equation}
where
\begin{equation}
    \label{def-2}\|f\|_{M_s^{r,p}}:= \|\langle n\rangle^s\|\Pi_nf\|_{L_x^r(\R)}\|_{\ell_n^p(\Z)},
\end{equation}

For given dyadic number $N\geq 1$, let $P_N$ be the Littlewood-Paley projector on the frequencies $\{|\xi|\sim N\}$. 
The following estimates, for any $1\leq q\leq p\leq \infty$,  follow readily from Bernstein's inequality
\begin{equation}
\label{bern-1}
\begin{split}
&\|P_N f\|_{L_x^p}\lesssim N^{\frac1q-\frac1p}\|f\|_{L_x^q},\\
&\|\Pi_n f\|_{L_x^p}\lesssim\|f\|_{L_x^q}.
\end{split}
\end{equation}

If we define  $u_n :=\Pi_n u$, the norm of the modulation space $M_s^{r,p}$ defined in \eqref{def-2} for $r=2$ can be written in the form
\begin{equation*}
\|f\|_{M_s^{2,p}}:= \|\|u_n\|_{H^s(\R)}\|_{\ell_n^p(\Z)}. 
\end{equation*}

In order to make the exposition more simple, in the case when $r=2$, we introduce a notation  $M^{s,p}:=M^{2, p}_s(\R)$.
It is worth noticing  that, for $p\geq 2$ one has $\ell^2\subseteq \ell^p$ and consequently $H^s\subseteq M^{s, p}$ i.e. 
\begin{equation}\label{immersion1}
\|f\|_{M^{s,p}}\lesssim \|f\|_{H^s(\R)}
\end{equation}

The following elementary results on the sum of dyadic numbers less than or equal to a given dyadic number $M$ will also be useful in our analysis.

\begin{lemma}\label{lem32} Let $a\neq 0$ and $M\geq 2$ be a given dyadic number, then
\begin{equation}\label{dyd-1}
\sum_{1\leq N\lesssim M} N^a\quad
\begin{cases}
 \sim_a M^a \quad & \textrm{if} \quad a>0,\\
 \lesssim 1\quad &\textrm{if} \quad a<0,
 \end{cases}
\end{equation}
and 
\begin{equation}\label{dyd-2}
\quad \sum_{1\leq N\lesssim M} 1 \lesssim \ln M.
\end{equation}
\end{lemma}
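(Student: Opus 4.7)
The plan is to reduce both statements to the summation of a (finite) geometric series by writing each admissible dyadic $N$ as $N=2^j$ and each dyadic $M$ as $M=2^J$. With this dictionary, the hypothesis $1\le N\lesssim M$ translates into $0\le j\le J+O(1)$, so every sum in the statement becomes a sum of $2^{ja}$ (or of $1$) over an integer interval of length comparable to $J+1\sim \log_2 M$.

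For \eqref{dyd-1} with $a>0$, I would compute
\[
\sum_{0\le j\le J}2^{ja}=\frac{2^{(J+1)a}-1}{2^a-1},
\]
and observe that the numerator is comparable to $2^{Ja}=M^a$ with a multiplicative constant depending on $a$ via the factor $(2^a-1)^{-1}$, giving the two-sided bound $\sum N^a\sim_a M^a$. For $a<0$, the same geometric series is dominated by the convergent infinite tail $\sum_{j\ge 0}2^{ja}=(1-2^a)^{-1}$, which is a finite constant, yielding $\sum N^a\lesssim 1$ uniformly in $M$. The implicit constants picked up when passing from ``$N\lesssim M$'' to ``$j\le J+O(1)$'' only shift the geometric sum by an $a$-dependent multiplicative constant, which is harmless.

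For \eqref{dyd-2} the sum simply counts the admissible $j$'s, giving $J+O(1)\sim \log_2 M\lesssim \ln M$.

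There is no real obstacle here; the only point that requires a small amount of care is making the implicit constant in $N\lesssim M$ explicit enough to see that it contributes only an $O(1)$ shift in the exponent range, so that the geometric-series bounds survive with constants depending only on $a$ (and on the implicit constant in $\lesssim$). Everything else is a one-line computation.
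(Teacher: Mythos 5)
Your proposal is correct and follows essentially the same route as the paper: both convert the dyadic sum into a geometric series in powers of $2$, absorb the implicit constant in $N\lesssim M$ as an $O(1)$ shift of the exponent range, and treat the case $a<0$ by comparison with the convergent infinite series and \eqref{dyd-2} by counting the $\sim\log_2 M$ admissible indices. No gaps.
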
 
\begin{proof}
Let $M=2^p$ and $a>0$, then 
$$
\sum_{1\leq N\leq C M} N^a=\sum_{1\leq 2^j \leq 2^b 2^p}(2^j )^a=\sum_{0\leq j \leq p+b}2^{ja}= \dfrac{2^{a(p+b)}-1}{2^a-1}\lesssim_a M^aC^a\sim_a M^a.
$$
where $C=2^b$. If $a<0$ the infinite geometric series is convergent and this finishes the proof of \eqref{dyd-1}. 

Similarly
$$
\sum_{1\leq N\leq C M} 1=\sum_{1\leq 2^j \leq 2^{b+p}} 1=\sum_{0\leq j \leq b+p}1 =p+b+1=\dfrac{\ln M}{\ln 2}+ \dfrac{\ln C}{\ln 2}+1\lesssim \ln M.
$$
where we used the hypothesis that $M\geq 2$.
\end{proof}

\subsection{Statement of the main results}

In this subsection, we state the  main result of this work. We start with the  well-posedness results for the IVP \eqref{5kdvbbm} with given data in the modulation space $M^{s,p}(\R)$. The first main result is the following local well-posedness theorem.
\begin{theorem}\label{mainTh1}
Let $s> 1$ and $p\geq 1$, then the IVP \eqref{5kdvbbm} is locally well-posed in $M^{s, p}(\R)$.
\end{theorem}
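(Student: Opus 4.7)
The plan is to recast \eqref{5kdvbbm} in Duhamel form and run a standard Banach fixed-point argument in $C([0,T];M^{s,p}(\R))$. Since $\gamma_1,\delta_1>0$, the symbol $1+\gamma_1\xi^2+\delta_1\xi^4\gtrsim 1+\xi^4$ is smooth, positive, and comparable to $\langle\xi\rangle^4$, so its inverse is a smooth Fourier multiplier gaining four derivatives at high frequency. Applying $(I-\gamma_1\partial_x^2+\delta_1\partial_x^4)^{-1}$ to \eqref{5kdvbbm}, the equation becomes $\eta_t=-i\phi(D)\eta+\mathcal{N}(\eta)$ with real phase
\begin{equation*}
\phi(\xi)=\frac{\xi(1-\gamma_2\xi^2+\delta_2\xi^4)}{1+\gamma_1\xi^2+\delta_1\xi^4},
\qquad
\mathcal{N}(\eta)=(I-\gamma_1\partial_x^2+\delta_1\partial_x^4)^{-1}\Big(-\tfrac{3}{4}(\eta^2)_x-\gamma(\eta^2)_{xxx}+\tfrac{7}{48}(\eta_x^2)_x+\tfrac{1}{8}(\eta^3)_x\Big).
\end{equation*}
The linear propagator $S(t):=e^{-it\phi(D)}$ is a Fourier multiplier with unimodular symbol, hence $\|\Pi_n S(t)f\|_{L^2}=\|\Pi_n f\|_{L^2}$ for every $n\in\Z$, so $S(t)$ is a one-parameter group of isometries on $M^{s,p}(\R)$. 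I would then seek a fixed point of the Duhamel map
\begin{equation*}
\Phi(\eta)(t)=S(t)\eta_0+\int_0^t S(t-t')\mathcal{N}(\eta(t'))\,dt'
\end{equation*}
in a closed ball of $C([0,T];M^{s,p}(\R))$.

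The core analytic step is the algebra-type estimate: for $s>1$ and any $p\in[1,\infty]$,
\begin{equation*}
\|fg\|_{M^{s,p}}\lesssim \|f\|_{M^{s,p}}\|g\|_{M^{s,p}},\qquad \|fgh\|_{M^{s,p}}\lesssim \|f\|_{M^{s,p}}\|g\|_{M^{s,p}}\|h\|_{M^{s,p}}.
\end{equation*}
To prove the bilinear bound I would decompose $\Pi_n(fg)=\sum_{|n_1+n_2-n|\leq 3}\Pi_n(\Pi_{n_1}f\,\Pi_{n_2}g)$, use Bernstein's inequality \eqref{bern-1} on the unit frequency boxes to get $\|\Pi_{n_1}f\,\Pi_{n_2}g\|_{L^2}\lesssim \|\Pi_{n_1}f\|_{L^2}\|\Pi_{n_2}g\|_{L^2}$, apply the Peetre-type inequality $\langle n\rangle^s\lesssim \langle n_1\rangle^s+\langle n_2\rangle^s$ (valid for $s\geq 0$ and $|n_1+n_2-n|\leq 3$), and invoke Young's inequality $\ell^1\ast\ell^p\hookrightarrow \ell^p$ to reach
\begin{equation*}
\|fg\|_{M^{s,p}}\lesssim \|f\|_{M^{s,p}}\|g\|_{M^{0,1}}+\|f\|_{M^{0,1}}\|g\|_{M^{s,p}}.
\end{equation*}
The embedding $M^{s,p}\hookrightarrow M^{0,1}$ is obtained by H\"older in the $n$-variable using $\langle n\rangle^{-s}\in\ell^{p'}$, which holds precisely when $s>1/p'=1-1/p$; the hypothesis $s>1$ covers this condition uniformly in $p\in[1,\infty]$. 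The trilinear bound follows by iteration. I would also verify that the Fourier multiplier $(I-\gamma_1\partial_x^2+\delta_1\partial_x^4)^{-1}\partial_x^k$ is bounded on $M^{s,p}$ for $0\leq k\leq 4$, which is immediate from its commutation with the projectors $\Pi_n$ and the uniform boundedness of its symbol.

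Combining the isometry of $S(t)$ with the algebra estimates applied to each of $(\eta^2)_x$, $(\eta^2)_{xxx}$, $(\eta_x^2)_x$ and $(\eta^3)_x$ after extracting the appropriate bounded multiplier $(I-\gamma_1\partial_x^2+\delta_1\partial_x^4)^{-1}\partial_x^k$, yields a Duhamel estimate
\begin{equation*}
\|\Phi(\eta)\|_{X_T}\leq \|\eta_0\|_{M^{s,p}}+CT\big(\|\eta\|_{X_T}^2+\|\eta\|_{X_T}^3\big),\qquad X_T:=C([0,T];M^{s,p}(\R)),
\end{equation*}
together with an analogous Lipschitz bound on differences. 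Choosing $T=T(\|\eta_0\|_{M^{s,p}})$ small enough makes $\Phi$ a contraction on the closed ball of radius $2\|\eta_0\|_{M^{s,p}}$; uniqueness, persistence and continuous dependence on the initial datum then follow from the same multilinear estimates.

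The main technical obstacle is the quasilinear-looking nonlinearity $(\eta_x^2)_x$, which loses three net derivatives on a quadratic expression and therefore only enjoys a one-derivative margin after the four-derivative smoothing of the inverse operator. This forces the bilinear estimate to be applied at the same regularity $s$ rather than at a lower level, and it is precisely the requirement that $M^{s,p}$ be a Banach algebra uniformly in $p\in[1,\infty]$ that fixes the threshold $s>1$, through the embedding $M^{s,p}\hookrightarrow M^{0,1}$ that saturates at the endpoint $p=\infty$. The other nonlinearities are strictly easier: $(\eta^2)_{xxx}$ already meshes with the full four-derivative gain, while $(\eta^2)_x$ and $(\eta^3)_x$ enjoy an additional three-derivative surplus.
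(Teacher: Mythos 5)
Your overall scheme --- the Duhamel formulation, the observation that $S(t)=e^{-it\phi(D)}$ commutes with each $\Pi_n$ and is therefore an isometry on $M^{s,p}$, and a contraction argument in $C([0,T];M^{s,p})$ --- is exactly the paper's, and your unit-box proof of the algebra property of $M^{s,p}$ for $s>1$ is correct and suffices for the terms $(\eta^2)_x$, $(\eta^2)_{xxx}$ and $(\eta^3)_x$. The genuine gap is in the term $(\eta_x^2)_x$, which you single out but then resolve incorrectly. After extracting the multiplier $\partial_x(I-\gamma_1\partial_x^2+\delta_1\partial_x^4)^{-1}$, whose symbol decays like $\langle\xi\rangle^{-3}$, the two factors are $\eta_x\in M^{s-1,p}$, so what you need is a product estimate at regularity $s-1$ (or lower), for instance $\|\eta_x^2\|_{M^{s-1,p}}\lesssim\|\eta_x\|_{M^{s-1,p}}^2$, not an estimate at level $s$. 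Your Peetre-plus-Young argument for such a bound runs through the embedding $M^{s-1,p}\hookrightarrow M^{0,1}$, which requires $(s-1)p'>1$, i.e. $s>2-\frac1p$; for every $p>1$ this is strictly stronger than the claimed threshold $s>1$ (already $s>\frac32$ at $p=2$, tending to $s>2$ as $p\to\infty$). Hence the Duhamel bound $\|\Phi(\eta)\|_{X_T}\leq\|\eta_0\|_{M^{s,p}}+CT(\|\eta\|_{X_T}^2+\|\eta\|_{X_T}^3)$ is not justified by your estimates when $1<s\leq 2-\frac1p$, and your closing assertion that the algebra property ``uniformly in $p$'' fixes the threshold at $s>1$ is not right for this term: the relevant embedding sits at level $s-1$, not $s$.

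The paper closes this term by a different key lemma: the smoothing bilinear estimate of Lemma \ref{BT1}, $\|\omega(\partial_x)(u_1u_2)\|_{M^{\sigma,p}}\lesssim\|u_1\|_{M^{\sigma,p}}\|u_2\|_{M^{\sigma,p}}$ claimed for all $\sigma>0$, proved by duality and a dyadic Littlewood--Paley case analysis rather than by the unit-box Young argument; using $\psi(\xi)\lesssim\omega(\xi)/\langle\xi\rangle$ it is applied at level $\sigma=s-1>0$ to obtain \eqref{Sharp1}, and the cubic term is handled by the analogous trilinear estimate \eqref{trilin-1}. A below-algebra-threshold bilinear estimate of this type is precisely the ingredient your proposal lacks. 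Be aware, moreover, that no product estimate can restore a $p$-independent threshold for this term: testing with $\widehat{\eta}$ concentrated on $\pm[M,2M]$ of height $M^{-\frac1p-(s-1)}$ (the high-high to low interaction) shows that $\|\psi(\partial_x)(\eta_x^2)\|_{M^{s,p}}\lesssim\|\eta\|_{M^{s,p}}^2$ forces $s\geq\frac32-\frac1p$, so for $p>2$ some $p$-dependent restriction beyond $s>1$ is unavoidable at the level of this nonlinear estimate (compare the range appearing in Theorem \ref{Th-global}); your remaining quadratic and cubic terms are indeed harmless, as you say.
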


We will use the splitting argument introduced in \cite{B1, B} and way earlier in \cite{BS} to prove that the local solution of the IVP \eqref{5kdvbbm} for given data in the modulation space can be extended to any arbitrary large time interval. More precisely, we prove the following global well-posedness result.

\begin{theorem}\label{Th-global}
Let $T\gg 1$ be given,  and $\gamma=\frac7{48}$. Then the local solution of the  IVP \eqref{5kdvbbm} with initial data $\eta_0\in M^{s, p}(\R)$ given by Theorem \ref{mainTh1} can be extended to the time interval $[0, T]$  if $1\leq \frac32-\frac1p <s<2$ or if $(s,p)\in [2, \infty]\times  [2, \infty]$.
\end{theorem}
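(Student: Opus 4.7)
The plan is to adapt Bourgain's high-low frequency decomposition \cite{B1, B}, in the spirit of \cite{CP}, combining the global $H^2$-theory coming from the energy conservation \eqref{energy} (valid when $\gamma = \tfrac{7}{48}$) with the local $M^{s,p}$-theory of Theorem~\ref{mainTh1}. Given $T \gg 1$, fix a large dyadic parameter $N = N(T)$ (to be chosen at the end) and split $\eta_0 = \phi_0 + \psi_0$ with $\phi_0 := \sum_{|n|\leq N}\Pi_n \eta_0$ the low-frequency part. Since $p \geq 2$ in both cases of the statement, H\"older's inequality on the band $|n|\lesssim N$ (using $\ell^p_n \hookrightarrow \ell^2_n$ on a finite set of cardinality $\sim N$) yields
\begin{equation*}
\|\phi_0\|_{H^2}\lesssim N^{\alpha(s,p)}\|\eta_0\|_{M^{s,p}},\qquad \alpha(s,p)=\tfrac{5}{2}-s-\tfrac{1}{p},
\end{equation*}
while $\|\psi_0\|_{M^{s,p}}\leq \|\eta_0\|_{M^{s,p}}$.

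Since $\phi_0\in H^2$, Theorem~A supplies a global-in-time solution $\phi$ of \eqref{5kdvbbm} with datum $\phi_0$, satisfying $\sup_t\|\phi(t)\|_{H^2}\lesssim \|\phi_0\|_{H^2}$. Writing $\eta=\phi+w$, the perturbation $w$ satisfies an IVP with $w(0)=\psi_0$ and source terms bilinear and trilinear in $(\phi,w)$. Splitting further $w(t)=S(t)\psi_0+v(t)$ and applying Duhamel, one uses the multilinear estimates of Section~\ref{sec-3}, together with $\phi\in H^2$ acting as a smooth multiplier, to prove that on a short interval $[0,\delta]$ with $\delta=\delta(\|\eta_0\|_{M^{s,p}})$ \emph{independent of $N$}, the Duhamel integral $v$ gains enough regularity to lie in $H^2$, with a quantitative estimate of the schematic form
\begin{equation*}
\|v(\delta)\|_{H^2}\lesssim \delta\cdot N^{\beta(s,p)}\cdot Q\bigl(\|\phi_0\|_{H^2},\|\eta_0\|_{M^{s,p}}\bigr),
\end{equation*}
for some polynomial $Q$ and computable exponent $\beta(s,p)$. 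Crucially, because $S(\cdot)$ is a unitary Fourier multiplier on $M^{s,p}$ (its symbol is a real phase and it commutes with $\Pi_n$), $\|S(t)\psi_0\|_{M^{s,p}}=\|\psi_0\|_{M^{s,p}}$ is preserved, so $\delta$ does not shrink with iteration.

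The iteration then proceeds by absorbing, at each time step $k\delta$, the Duhamel gain $v(k\delta)$ into the smooth $H^2$-part: the new decomposition at step $k+1$ is $(\phi(k\delta)+v(k\delta))+S(k\delta)\psi_0$. After $k\sim T/\delta$ steps, the $H^2$-norm of the smooth part grows at most like $\|\phi_0\|_{H^2}+(T/\delta)N^{\beta(s,p)}$, polynomial in $N$. The principal obstacle --- and the point at which the hypotheses on $(s,p)$ enter --- is closing the bootstrap: using \eqref{immersion1} to pass from $H^2$ back to $M^{s,p}$ in the smooth part, one must verify that this accumulated polynomial-in-$N$ growth can be absorbed under a consistent choice $N=N(T)$ so that $\|\eta(T)\|_{M^{s,p}}$ remains finite. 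The balance between the low-frequency exponent $\alpha(s,p)$ and the per-step exponent $\beta(s,p)$ forces precisely the condition $s>\tfrac{3}{2}-\tfrac{1}{p}$ in the sub-$H^2$ regime $s<2$; in the complementary range $(s,p)\in[2,\infty]\times[2,\infty]$, the combination of \eqref{immersion1}, $s\geq 2$, and Lemma~\ref{lem32} makes the split essentially cost-free (at worst a logarithmic loss in $N$), and the conclusion follows without any delicate balance.
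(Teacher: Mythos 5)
Your overall strategy for the range $1<s<2$ is the same as the paper's (Bourgain-type high--low splitting, energy conservation for the smooth part, absorbing the Duhamel gain into the $H^2$-part, free evolution of the rough part preserving the $M^{s,p}$-norm), but two of your central claims do not hold as stated and the quantitative core of the argument is left out. First, the assertion that the time step $\delta$ can be taken \emph{independent of $N$} is not justified and is inconsistent with how the bootstrap closes. To run $\sim T/\delta$ iterations with a uniform step you need a uniform bound on the norms entering the local contraction for the perturbation equation; the only quantity propagated globally is the energy, i.e.\ the $H^2$-norm of the smooth part, which is already of size $N^{\alpha(s,p)}$ at step one, and via \eqref{immersion1} this is also the only available uniform control of its $M^{s,p}$-norm after the first step (tracking the $M^{s,p}$-norm directly through the local theory loses a fixed factor per step and the step lengths then sum to a finite time). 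Accordingly, in the actual proof the common step is $t_0\sim N^{-2\vartheta}$ with $\vartheta=(2-s)+\tfrac12-\tfrac1p$ (your $\alpha(s,p)$), the number of steps is $TN^{2\vartheta}$, and the closing condition $TN^{2\vartheta}N^{-1/2}\lesssim N^{2\vartheta}$ forces the choice $N\sim T^2$. Moreover, your ``computable exponent $\beta(s,p)$'' is precisely the missing key lemma: one must prove the smoothing bound $\|h(t_0)\|_{H^2}\lesssim N^{s-3}$ together with $\|u(t_0)\|_{H^2}\lesssim N^{\vartheta}$ (Lemma \ref{lemah1}, resting on \eqref{32}, i.e.\ $\|v_0\|_{H^1}\lesssim N^{1-s}$, which is where the hypothesis $s>\tfrac32-\tfrac1p$ actually enters through the $\ell^p\to\ell^2$ loss $N^{1/2-1/p}$), and then estimate the energy increment $E(u_1)-E(u(t_0))\lesssim N^{-1/2}$. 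Asserting that ``the balance forces $s>\tfrac32-\tfrac1p$'' without these estimates leaves the heart of the proof unproved.

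Second, the case $(s,p)\in[2,\infty]\times[2,\infty]$ is essentially not addressed, and the claim that the splitting is ``cost-free up to a logarithm'' is false as stated: for $p>2$ the low-frequency $H^2$-bound costs $N^{1/2-1/p}$, and for $2\le s\le \tfrac52$ and large $p$ one has $M^{s,p}\not\subset H^2$, so something genuinely has to be done. The paper treats $s\ge2$ by a different mechanism: it fixes $s_0\in(1,2)$ with $s_0>\tfrac32-\tfrac1p$, uses Case~1 together with Theorem~A (giving $\|\eta(t)-S(t)\eta_0\|_{H^2}\lesssim (1+t)^{2-s_0}$) to obtain a global a priori bound on $\|\eta(t)\|_{M^{s_0,p}}$, and then applies the tame multilinear estimates of Lemma \ref{estimnormgwp}, $\|F(\eta)\|_{M^{s,p}}\lesssim (\|\eta\|_{M^{s_0,p}}+\|\eta\|_{M^{s_0,p}}^2)\|\eta\|_{M^{s,p}}$, to iterate the local theory with a uniform step of size determined by the $M^{s_0,p}$-bound, accepting exponential-in-$T$ growth of the $M^{s,p}$-norm. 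You would need either to supply this (or an equivalent) argument for $s\ge2$, or to substantiate a splitting argument there, before the theorem is proved in the stated range.
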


The next main result deals with the local well-posedness of the  IVP \eqref{5kdvbbm} for given data in the $L^p$-based sobolev space $H^{s,p}$ and reads as follows.

\begin{theorem} \label{mainTh2}
Let $p\in [1,\infty)$ and $s\geq \max\left\{ \frac1{p}+\frac12, 1 \right\}$, then the fifth order KdV-BBM type equation \eqref{5kdvbbm} with $\delta_2=0$ is locally well-posed in $H^{s,p}(\R)$. If $\delta_2 \neq 0$ is locally well-posed in $H^{s,2}(\R)$
\end{theorem}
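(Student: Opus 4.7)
The plan is to recast \eqref{5kdvbbm} as a Duhamel integral equation and close a contraction in $C([0,T];H^{s,p}(\R))$. Applying $\phi(D)^{-1}$, with $\phi(\xi)=1+\gamma_1\xi^2+\delta_1\xi^4\ge 1$, one obtains
\[
\eta(t)=S(t)\eta_0+\int_0^t S(t-t')\,\phi(D)^{-1}\partial_x\,\widetilde N(\eta)(t')\,dt',
\]
where $S(t)=e^{it\psi(D)}$ is the linear group with symbol $\psi(\xi)=(-\xi+\gamma_2\xi^3-\delta_2\xi^5)/\phi(\xi)$ and, after pulling out one derivative,
\[
\widetilde N(\eta)=-\tfrac34\eta^2-\gamma\,\partial_x^2(\eta^2)+\tfrac{7}{48}(\eta_x)^2+\tfrac18\eta^3.
\]

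First I would establish the linear bounds. The multipliers $\phi(D)^{-1}\partial_x^k$, $k=1,2,3$, have smooth symbols of order $k-4\le -1$ and satisfy the Mikhlin--H\"ormander condition, so they are bounded on $H^{s,p}$ for $1<p<\infty$; the smoothness and strict positivity of $\phi$ furnish integrable Bessel-type convolution kernels that handle the endpoint $p=1$. For the group $S(t)$, when $\delta_2=0$ the symbol $\psi$ lies in $S^{-1}$ (bounded, decaying like $1/|\xi|$ at infinity), so $e^{it\psi(D)}$ is a Mikhlin-type multiplier with constants locally bounded in $t$, giving $\|S(t)f\|_{H^{s,p}}\le C(t)\|f\|_{H^{s,p}}$ on the whole range $p\in[1,\infty)$. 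When $\delta_2\ne 0$ the symbol $\psi$ grows linearly at infinity, the associated multiplier estimates become subtle for $p\ne 2$, and the argument is carried out only at $p=2$, where Plancherel yields the isometry $\|S(t)f\|_{H^s}=\|f\|_{H^s}$.

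Next I would derive the multilinear estimates in $H^{s,p}$. Since $s\ge\max\{1/p+1/2,1\}>1/p$, Sobolev embedding gives $H^{s,p}\hookrightarrow L^\infty$ and the Kato--Ponce inequality makes $H^{s,p}$ a Banach algebra; the contributions coming from $\eta^2$, $\partial_x^2(\eta^2)$ and $\eta^3$ are then immediately controlled by $\|\eta\|_{H^{s,p}}^2$, $\|\eta\|_{H^{s,p}}^2$ and $\|\eta\|_{H^{s,p}}^3$, respectively, using that $\phi(D)^{-1}\partial_x^k$ gains $4-k$ derivatives. The sensitive term is $(\eta_x)^2$, since $\eta_x$ need not be bounded. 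My strategy is: (i)~use the three-derivative gain of $\phi(D)^{-1}\partial_x$ to reduce to $\|(\eta_x)^2\|_{H^{s-3,p}}$; (ii)~for $s<3$ use the dual Sobolev embedding $L^q\hookrightarrow H^{s-3,p}$ (which at $p=1$ relies on the $L^1$-boundedness of the Bessel potential) to bound $\|(\eta_x)^2\|_{H^{s-3,p}}\lesssim\|(\eta_x)^2\|_{L^q}$ with $1/q=2/p-2s+2$; (iii)~apply H\"older $\|(\eta_x)^2\|_{L^q}\le\|\eta_x\|_{L^{2q}}^2$; (iv)~close with $H^{s-1,p}\hookrightarrow L^{2q}$, which holds precisely at that value of $2q$. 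The admissibility condition $q\ge 1$ in step~(ii) translates exactly to $s\ge 1/p+1/2$, which is the origin of the threshold in the theorem.

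Finally, with the linear and multilinear bounds in hand, a standard Picard iteration on the ball $\{\eta\in C([0,T];H^{s,p}):\|\eta\|_{L^\infty_T H^{s,p}}\le 2\|\eta_0\|_{H^{s,p}}\}$ produces, for $T=T(\|\eta_0\|_{H^{s,p}})>0$ small enough, a unique fixed point of the Duhamel map together with the Lipschitz continuity of the data-to-solution map. The principal obstacle I anticipate is the $(\eta_x)^2$ estimate at the boundary cases $1\le p<2$ and $(s,p)=(3/2,1)$, where one must carefully justify the Sobolev embeddings into negative-order $L^p$-based Sobolev spaces; a secondary technical issue is the $H^{s,1}$-boundedness of $S(t)$ and of $\phi(D)^{-1}\partial_x^k$, for which the standard Calder\'on--Zygmund/Mikhlin argument does not apply and one must directly estimate the convolution kernels using the smoothness and strict positivity of $\phi$.
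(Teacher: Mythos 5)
Your overall skeleton (Duhamel formula, boundedness of the group for $\delta_2=0$ coming from the decay of its symbol, with a direct kernel estimate at $p=1$ and the observation that linear growth of the symbol for $\delta_2\neq0$ confines you to $p=2$, then a contraction in $C([0,T];H^{s,p})$) is the same as the paper's; Lemma \ref{lema-ln-m} implements the $p=1$ group bound via a Bernstein/Carlson-type estimate on $S(t)-I$ and interpolation, essentially what you sketch. Where you genuinely diverge is in the multilinear estimates: the paper never uses the algebra property, but instead reduces every nonlinear term to the sharp bilinear estimate for $\omega(\partial_x)=|D|(1+D^2)^{-1}$ taken from Wang's BBM paper (Proposition \ref{propomega}, valid for $s\geq\max\{\frac1p-\frac12,0\}$ and all $1\le p<\infty$), by factoring out $L^p$-bounded multiplier quotients such as $\tau/\omega$ and $\psi\Lambda/\omega$, and treats the cubic term by fractional Leibniz plus Sobolev embeddings with a case split $p\ge3$ versus $1\le p<3$. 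Your Sobolev--H\"older--duality route is more elementary and recovers the threshold $s\ge\frac1p+\frac12$ in the interior of the range.

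However, there is a genuine gap at the endpoint $(s,p)=(\tfrac32,1)$, which the theorem includes. At $p=1$, $s=\tfrac32$ your chain forces $q=1$, so you need $\|(\eta_x)^2\|_{L^1}=\|\eta_x\|_{L^2}^2\lesssim\|\eta\|_{H^{3/2,1}}^2$, i.e. the embedding $H^{1/2,1}(\R)\hookrightarrow L^2(\R)$; this is exactly the excluded critical case of Theorem \ref{teor2.2}(a) for $p=1$, and at the equality $s=\frac1p+\frac12$ there is no room to perturb $q$. (For $1<p<2$ the critical exponent is admissible, so only $p=1$ fails; you flagged this obstacle but did not resolve it.) The missing idea is precisely the endpoint bilinear estimate $\|\omega(\partial_x)(u_xv_x)\|_{H^{s-1,1}}\lesssim\|u_x\|_{H^{s-1,1}}\|v_x\|_{H^{s-1,1}}$ with $s-1=\tfrac12$, which the paper imports wholesale from \cite{MW-16} through Proposition \ref{propPsi}; without it, your argument proves the theorem only for $s>\max\{\frac1p+\frac12,1\}$ when $p=1$. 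A secondary inaccuracy: the assertion that $\phi(D)^{-1}\partial_x^{3}$ ``gains one derivative'' is false on $L^1$-based spaces, because the symbol $\langle\xi\rangle\,\xi^{3}/\varphi(\xi)$ tends to different nonzero constants as $\xi\to\pm\infty$, a Hilbert-transform-type obstruction, so it is not the Fourier transform of a finite measure and hence not an $L^1$ multiplier. This one is harmless --- since $s>\frac1p$ strictly on the relevant range you may gain only $1-\epsilon$ derivatives (the symbol $\langle\xi\rangle^{1-\epsilon}\xi^{3}/\varphi(\xi)$ does satisfy the $\mathcal{M}_1$ criterion of Theorem \ref{teor2.1}) and still close with the algebra property --- but it should be stated that way.
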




\setcounter{equation}{0}
\section{Local Well-posedness Theory in $M^{s, p}$, $s> 1$, $p\geq1$.}\label{sec-3}



This section is devoted to investigate the local well-posedness issues of the IVP \eqref{5kdvbbm}
for given data  $\eta(x,0) =\eta_0(x)$ in the modulation space $M^{s,p}(\R)$. We start by writing the IVP (\ref{5kdvbbm}) in an equivalent 
integral equation form. 
First note that, taking the Fourier transform in  equation \eqref{5kdvbbm} with respect to the spatial variable we obtain
\begin{equation}\label{eq1.9}
\begin{cases}
i\eta_t = \phi(\partial_x)\eta + \tau (\partial_x)\eta^2 - \frac18\psi(\partial_x)\eta^3  -\frac7{48}\psi(\partial_x)\eta_x^2\, ,\\
 \eta(x,0) = \eta_0(x),
 \end{cases}
\end{equation}
where  the  Fourier multiplier operators $\phi(\partial_x)$, $\psi(\partial_x)$ and $\tau(\partial_x)$ are defined via
their symbols, {\it viz.},
\begin{equation}\label{phi-D}
\widehat{\phi(\partial_x)f}(\xi):=\phi(\xi)\widehat{f}(\xi), \qquad \widehat{\psi(\partial_x)f}(\xi):=\psi(\xi)\widehat{f}(\xi) \;\;\; {\rm and }\;\;\; \widehat{\tau(\partial_x)f}(\xi):=\tau(\xi)\widehat{f}(\xi), 
\end{equation}
 with
\begin{equation*}
  \phi(\xi)=\frac{\xi(1-\gamma_2\xi^2+\delta_2\xi^4)}{\varphi(\xi)}, \quad \psi(\xi)=\frac{\xi}{\varphi(\xi)} \quad  {\rm and} \quad \tau(\xi)=\frac{3\xi-4\gamma\xi^3}{4\varphi(\xi)}.
\end{equation*}

Since $\gamma_1,  \delta_1$ are positive, the common denominator 
\begin{equation*}
 \varphi(\xi) := 1 + \gamma_1\xi^2+\delta_1\xi^4,
\end{equation*}
in the above symbols is strictly positive.  

From here onwards, we will consider the IVP \eqref{eq1.9} in place of the \eqref{5kdvbbm}. Now, we consider first the following  linear IVP  associated to \eqref{eq1.9}
\begin{equation*}
\begin{cases}
i\eta_t = \phi(\partial_x)\eta,\\
\eta(x,0) = \eta_0(x),
\end{cases}
\end{equation*}
whose solution is given  by $\eta(t) = S(t)\eta_0$, where 
\begin{equation}\label{St}\widehat{S(t)\eta_0} = e^{-i\phi(\xi)t}\widehat{\eta_0}
\end{equation}
 is defined via its Fourier transform.
Clearly, $S(t)$ is a unitary operator on $H^s$ and $M^{s, p}$ for any $s \in \R$, so that
\begin{equation}\label{eq1.11}
\|S(t)\eta_0\|_{H^s} = \|\eta_0\|_{H^s},\qquad \textrm{and}\qquad \|S(t)\eta_0\|_{M^{s, p}} = \|\eta_0\|_{M^{s,p}} 
\end{equation}
for all $t > 0$.

Finally, we use Duhamel's formula  to write the IVP  \eqref{eq1.9} in the following equivalent integral equation form,
\begin{equation}\label{eq1.12}
\eta(x,t) = S(t)\eta_0 -i\int_0^tS(t-t')\Big(\tau(\partial_x)\eta^2 - \frac18 \psi(\partial_x)\eta^3 -\frac7{48}\psi (\partial_x)\eta_x^2\Big)(x, t') dt'.
\end{equation}

In what follows, a short-time solution of (\ref{eq1.12}) will be obtained via the contraction mapping principle in the space $C([0,T];M^{s,p})$.  This will provide a 
proof of Theorem \ref{mainTh1}.

\subsection{Multilinear Estimates}

 In this subsection we establish several multilinear estimates that will be useful to perform contraction mapping principle in order to provide a proof of the local well-posedness result announced in  Theorem \ref{mainTh1}.
  First, we record the  following $M^{s, p}$ version of the  ``sharp" bilinear estimate obtained in \cite{BT}.

\begin{lemma}\label{BT1}
 For $s > 0$, $p\geq 1$, there is a constant $C = C_s$ for which
\begin{equation}\label{bt}
\|\omega(\partial_x) (u_1 u_2)\|_{M^{s, p}} \le C\|u_1\|_{M^{s, p}}\|u_2\|_{M^{s, p}}
\end{equation}
 where $\omega(\partial_x)$  is the Fourier multiplier operator 
with symbol
\begin{equation*}
\omega(\xi) \, = \, \frac{|\xi|}{1 + \xi^2} \leq \dfrac{2}{\langle \xi \rangle}.
\end{equation*} 
\end{lemma}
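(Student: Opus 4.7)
The plan is to unfold the modulation norm as an $\ell^p_n$ sum of $L^2_x$ norms of the frequency-localized pieces $\Pi_n\omega(\partial_x)(u_1u_2)$, and then estimate each piece through a bilinear interaction supported near $n_1+n_2\approx n$. The key observation is that $\omega(\partial_x)$ contributes a smoothing factor of size $\langle n\rangle^{-1}$ on each block $\Pi_n$, since $\omega(\xi)\lesssim\langle\xi\rangle^{-1}\sim\langle n\rangle^{-1}$ on the Fourier support of $\sigma_n$. This immediately yields
\begin{equation*}
\|\Pi_n\omega(\partial_x)(u_1u_2)\|_{L^2_x}\lesssim \langle n\rangle^{-1}\|\Pi_n(u_1u_2)\|_{L^2_x}.
\end{equation*}

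Next I would decompose $u_1u_2=\sum_{n_1,n_2}\Pi_{n_1}u_1\cdot\Pi_{n_2}u_2$; because the Fourier support of $\Pi_{n_1}u_1\cdot\Pi_{n_2}u_2$ lies in $[n_1+n_2-2,n_1+n_2+2]$, only triples with $|n-n_1-n_2|\le 3$ contribute to $\Pi_n(u_1u_2)$. For each such triple, H\"older's inequality together with the Bernstein estimate \eqref{bern-1} gives
\begin{equation*}
\|\Pi_{n_1}u_1\cdot\Pi_{n_2}u_2\|_{L^2_x}\le \|\Pi_{n_1}u_1\|_{L^\infty_x}\|\Pi_{n_2}u_2\|_{L^2_x}\lesssim \|\Pi_{n_1}u_1\|_{L^2_x}\|\Pi_{n_2}u_2\|_{L^2_x}.
\end{equation*}
Setting $a_{n_1}=\langle n_1\rangle^s\|\Pi_{n_1}u_1\|_{L^2_x}$ and $b_{n_2}=\langle n_2\rangle^s\|\Pi_{n_2}u_2\|_{L^2_x}$ so that $\|a\|_{\ell^p}=\|u_1\|_{M^{s,p}}$ and $\|b\|_{\ell^p}=\|u_2\|_{M^{s,p}}$, the bounds above combine into
\begin{equation*}
\langle n\rangle^s\|\Pi_n\omega(\partial_x)(u_1u_2)\|_{L^2_x}\lesssim \sum_{|n-n_1-n_2|\le 3}\frac{\langle n\rangle^{s-1}}{\langle n_1\rangle^s\langle n_2\rangle^s}\,a_{n_1}\,b_{n_2}.
\end{equation*}

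To close the estimate I would exploit the elementary comparison $\langle n\rangle^s\lesssim \langle n_1\rangle^s+\langle n_2\rangle^s$, valid for $s\ge 0$ whenever $|n-n_1-n_2|\le 3$, which bounds the kernel above by $\langle n\rangle^{-1}\bigl(\langle n_1\rangle^{-s}+\langle n_2\rangle^{-s}\bigr)$. The right-hand side then splits into two symmetric shifted convolutions, and Young's inequality in $\ell^p(\mathbb{Z})$ in the form $\|a*\tilde b\|_{\ell^p}\le\|a\|_{\ell^p}\|\tilde b\|_{\ell^1}$ (with $\tilde b_{n_2}=b_{n_2}/\langle n_2\rangle^s$) reduces matters to estimating $\|b/\langle\cdot\rangle^s\|_{\ell^1}$ in terms of $\|b\|_{\ell^p}$, which follows from H\"older's inequality and the fact that $\sum_m\langle m\rangle^{-sp'}<\infty$ whenever $sp'>1$.

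The main obstacle will be reaching the full range $s>0$, $p\ge 1$: the H\"older step above forces the restriction $s>1-1/p$, which excludes the corners with small $s$ and large $p$. To cover the remaining range I would use the extra $\langle n\rangle^{-1}$ factor more carefully by splitting according to the relative sizes of $\langle n_1\rangle,\langle n_2\rangle$ and $\langle n\rangle$: in the off-diagonal regime $|n_1|\gg|n_2|$ (or its mirror image) one has $\langle n\rangle\sim\max(\langle n_1\rangle,\langle n_2\rangle)$, so $\langle n\rangle^{-1}$ directly absorbs one factor $\langle n_j\rangle^{-1}$ and Young's inequality closes with no weight loss; in the diagonal regime $|n_1|\sim|n_2|$ one may carry out a Cauchy-Schwarz argument as in Bona-Tzvetkov \cite{BT} on each frequency block and then return to $\ell^p$ by interpolation. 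Organizing this case split so that the constant depends only on $s$ is the step I expect to require the most care.
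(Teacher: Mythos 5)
Your off-diagonal argument and your treatment of the regime $s>1-\frac1p$ are sound, and the route is genuinely different from the paper's: you work directly with the unit-scale blocks $\Pi_n$ and discrete Young/H\"older estimates, whereas the paper argues by duality, reduces to a trilinear form, and runs a dyadic Littlewood--Paley case analysis ($N_2\ll N_1$, $N\ll N_1\sim N_2$, etc.) using Bernstein on dyadic blocks and the dyadic summation Lemma \ref{lem32}. The dyadic scale is not a cosmetic difference: it is exactly what makes small positive $s$ summable, since $\sum_{N\ \mathrm{dyadic}}N^{-\varepsilon}<\infty$ for every $\varepsilon>0$, while on unit blocks your H\"older step needs $\sum_n\langle n\rangle^{-sp'}<\infty$, i.e.\ $s>1-\frac1p$, as you noticed.

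The genuine gap is the diagonal regime $|n_1|\sim|n_2|$ when $s\le 1-\frac1p$: ``Cauchy--Schwarz as in Bona--Tzvetkov on each frequency block and then return to $\ell^p$ by interpolation'' is not an argument, and in fact no argument can close this case in the full range you are aiming at. Test the estimate on $\widehat{u_1}=\chi_{[M,2M]}$, $\widehat{u_2}=\chi_{[-2M,-M]}$: then $\|u_j\|_{M^{s,p}}\sim M^{s+\frac1p}$, while $\widehat{u_1u_2}(\xi)\sim (M-|\xi|)_+$, so already the single block $n=2$ gives $\|\Pi_2\,\omega(\partial_x)(u_1u_2)\|_{L^2}\sim M$ and hence $\|\omega(\partial_x)(u_1u_2)\|_{M^{s,p}}\gtrsim M$, forcing $M\lesssim M^{2s+\frac2p}$, i.e.\ $s\ge\frac12-\frac1p$. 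Thus for $p>2$ and $0<s<\frac12-\frac1p$ the diagonal interaction genuinely defeats the inequality (interpolation cannot help, since the $p=\infty$ endpoint needs $s\ge\frac12$), and for $1\le p\le 2$ closing it requires an extra ingredient you have not supplied: regroup the unit blocks into dyadic blocks, use $\ell^p\hookrightarrow\ell^2$ to pass from $M^{s,p}$ to dyadic $\ell^p(H^s)$ data, apply Bernstein at the dyadic scale of the output frequency (this is the source of the factor $N^{1/2}$ in the paper's Sub-Case IIb), and sum the resulting dyadic tails, which converge for every $s>0$. Without that step your proof covers only $s>1-\frac1p$ together with the off-diagonal part of the remaining range, not the statement as claimed.
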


\begin{proof}
We divide the proof of this lemma in two different case. First, we consider the case $p>1$.

Note that, $\left(M^{s, p}\right)'=M^{-s, p'}$, where $\frac1p+\frac1{p'}=1$. Therefore, by  duality,  we have
\begin{equation}\label{bt1}
\|\omega(\partial_x) (u_1 u_2)\|_{M^{p, s}} =\sup_{v \in M^{-s, p'}}\int \omega(\partial_x) (u_1 u_2) (x)\overline{v}(x) dx.
\end{equation}

Thus, using Plancherel's equality,  in view of \eqref{bt1} the estimate \eqref{bt} is equivalent to proving that
\begin{equation}\label{bt2}
\int_{\xi_1+\xi_2+\xi=0}\omega(\xi)\widehat{v}(\xi)\widehat{u_1}(\xi_1) \widehat{u_2}(\xi_2) 
\lesssim \|u_1\|_{M^{s, p}}\|u_2\|_{M^{s,p}}\|v\|_{M^{-s, p'}}.
\end{equation}

Note that, the estimate \eqref{bt2} is a consequence of the following inequality
\begin{equation}\label{bt3}
\int_{\xi_1+\xi_2+\xi=0}\dfrac{\langle \xi \rangle^s}{\langle \xi \rangle}\widehat{v}(\xi)\widehat{u_1}(\xi_1) \widehat{u_2}(\xi_2) 
\lesssim \|u_1\|_{M^{s,p}}\|u_2\|_{M^{s, p}}\|v\|_{M^{0,p'}}.
\end{equation}
Let $N_1$, $N_2$ and $N$ be dyadic numbers such that
$$
|\xi_1|\sim N_1,\quad|\xi_2|\sim N_2,\quad |\xi|\sim N,
$$
and let 
$$
|\xi_{\textrm max}|= \max \{|\xi_1|, |\xi_2|\}.
$$
Without loss of generality we can suppose that $|\xi_1|\geq |\xi_2|$. As $\xi=-\xi_1-\xi_2$, one has $|\xi|\leq 2|\xi_1|$.

 In what follows, we prove the estimate \eqref{bt3} considering two different cases, viz., $N_2 \ll N_1$ and $N_2 \sim N_1$.\\

\noindent
{\bf Case I. $\boxed{N_2 \ll N_1}$:}
 In this case we have $|\xi|=|\xi_1+\xi_2| \gtrsim |\xi_1|$ and consequently
 $$|\xi|\sim |\xi_1|\sim N_1.$$
 
 Using $u_N=P_Nu$ and Plancherel's inequality, to obtain \eqref{bt3} in this case, we need to estimate
 \begin{equation}\label{bt3.1}
 \mathcal{X}:=\sum_{N_2 \ll N_1\sim N} \int \dfrac1{N^{1-s}} u_{N_1}u_{N_2} v_{N}.
 \end{equation}
 
 Using Cauchy-Schwartz and Bernstein's inequality  \eqref{bern-1}, we obtain from \eqref{bt3.1} that 
 \begin{equation}\label{bt4.0}
 \begin{split}
 \mathcal{X}\leq &\sum_{N_2 \ll N_1\sim N}\dfrac1{N^{1-s}} \|u_{N_1}\|_{L^2}\| u_{N_2}\|_{L^\infty} \|v_{N}\|_{L^2}\\
 \leq &\sum_{N_2 \ll N_1\sim N}\dfrac1{N^{1-s}} \|u_{N_1}\|_{L^2}N_2^{1/2}\| u_{N_2}\|_{L^2} \|v_{N}\|_{L^2}\\
 \lesssim &\sum_{N_2 \ll N_1\sim N}\dfrac{N_2^{1/2}}{N^{1-s}} N_1^{s}\|u_{N_1}\|_{L^2}N_2^{s}\| u_{N_2}\|_{L^2} \|v_{N}\|_{L^2}N_1^{-s}N_2^{-s}\\
 \lesssim &\sum_{N_2 \ll N_1\sim N}\dfrac{N_2^{1/2} N^{-s}N_2^{-s}}{N^{1-s}} \|u_{N_1}\|_{H^s}\| u_{N_2}\|_{H^s} \|v_{N}\|_{L^2},\\
 \end{split}
 \end{equation}
 where we used that $N\sim N_1$. Now opening the sum in \eqref{bt4.0}, using H\"older's inequality and Lemma \ref{lem32}, we have
 \begin{equation}\label{bt4.1}
 \begin{split}
 \mathcal{X}
 \lesssim &\sum_{N \gg 1} \dfrac{\|v_{N}\|_{L^2}}{N}\sum_{ N_1\lesssim N} \|u_{N_1}\|_{H^s} \Big(\sum_{ N_2\leq N}\| u_{N_2}\|_{H^s} N_2^{1/2-s}\Big)\\
  \lesssim &\sum_{N \gg 1} \dfrac{\|v_{N}\|_{L^2}}{N}\sum_{ N_1\lesssim N} \|u_{N_1}\|_{H^s} \Big(\| u_2\|_{M^{s,p}}\Big(\sum_{ N_2\leq N} N_2^{p'(1/2-s)}\Big)^{1/p'}\Big).
 \end{split}
 \end{equation}
 
 Observe that, using the estimates \eqref{dyd-1} and \eqref{dyd-2} from Lemma \ref{lem32}, we have
 \begin{equation}\label{bt5.0}
 \Big(\sum_{ N_2\lesssim N} N_2^{p'(1/2-s)}\Big)^{1/p'} \lesssim
 \begin{cases}
 1 \quad \textrm{if}\quad s>1/2,\\
 \left( \ln N\right)^{1/p'}\quad \textrm{if}\quad s=1/2,\\
 N^{1/2-s}\quad \textrm{if}\quad  s<1/2.
 \end{cases}
 \end{equation}
 This motivates us to analyse \eqref{bt4.1}  by dividing in the  following three different cases.\\
 
 \noindent
 {\bf Sub-Case Ia. $\boxed{0<s<1/2}$:}
 In this case, using  H\"older's inequality and \eqref{bt5.0}, we have from \eqref{bt4.1} that
\begin{equation*}
 \begin{split}
 \mathcal{X}
 &\lesssim \| u_2\|_{M^{s,p}} \sum_{N \geq 1} \dfrac{\|v_{N}\|_{L^2}}{N^{1/2+s}}
  \,\,\|u_1\|_{M^{s,p}} \Big(\sum_{N_1 \lesssim N}1\Big)^{1/p'}\\
   &\lesssim \| u_2\|_{M^{s,p}} \|u_1\|_{M^{s,p}}  \sum_{N \geq 1} \dfrac{\left(\ln N\right)^{1/p'}}{N^{1/2+s}}\|v_{N}\|_{L^2}
  \\
   &\lesssim \| u_2\|_{M^{s,p}} \|u_1\|_{M^{s,p}}  \|v\|_{M^{0,p'}}  \left(\sum_{N \geq 1} \dfrac{\left(\ln N\right)^{p/p'}}{N^{(1/2+s)p}}\right)^{1/p}\\
   &\lesssim \| u_2\|_{M^{s,p}} \|u_1\|_{M^{s,p}}  \|v\|_{M^{0,p'}}.
 \end{split}
 \end{equation*}
{\bf Sub-Case Ib. $\boxed{s=1/2}$:}
In this case also using  H\"older's inequality and \eqref{bt5.0}, we have from \eqref{bt4.1} that
\begin{equation*}
 \begin{split}
 \mathcal{X}
 &\lesssim \| u_2\|_{M^{1/2,p}} \, \sum_{N \gg 1} \dfrac{\Big( \ln N\Big)^{1/p'}}{N}\|v_{N}\|_{L^2}\|u_1\|_{M^{1/2,p}} \Big(\sum_{N_1 \lesssim N}1\Big)^{1/p'}\\
 &\lesssim \| u_2\|_{M^{1/2,p}} \|u_1\|_{M^{1/2,p}}  \|v\|_{M^{0,p'}} \left(\sum_{ N\gg 1} \frac{(\ln N)^{2p/p'}}{N^{p}}\right)^{1/p}\\
 &\lesssim \| u_2\|_{M^{1/2,p}} \|u_1\|_{M^{1/2,p}}  \|v\|_{M^{0,p'}}.
 \end{split}
 \end{equation*}
{\bf Sub-Case Ic. $\boxed{s>1/2}$:}
As in the earlier cases,  using \eqref{bt5.0}, H\"older's inequality and Lemma \ref{lem32}, we obtain from \eqref{bt4.1}
\begin{equation*}
 \begin{split}
 \mathcal{X}
 \lesssim &\| u_2\|_{M^{s,p}} \, \sum_{N \gg 1} \dfrac{1}{N}\|v_{N}\|_{L^2}\|u_1\|_{M^{s,p}} \Big(\sum_{N_1 \lesssim N}1\Big)^{1/p'}\\
 \lesssim &\| u_2\|_{M^{s,p}} \|u_1\|_{M^{s,p}}  \|v\|_{M^{0,p'}} \left(\sum_{ N\gg 1} \frac{(\ln N)^{p/p'}}{N^{p}}\right)^{1/p}\\
 \lesssim &\| u_2\|_{M^{s,p}} \|u_1\|_{M^{s,p}}  \|v\|_{M^{0,p'}}.
 \end{split}
 \end{equation*}

\noindent
{\bf Case II. $\boxed{N_2 \sim N_1}$:}
We analyse this case  dividing in two different sub-cases, viz., considering  $N \sim N_2 \sim N_1$ and $N \ll N_2 \sim N_1$.
\\

\noindent
{\bf Sub-Case IIa. $\boxed{N \sim N_2 \sim N_1}$:}
This Sub-case follows as in the Case I.\\

\noindent
{\bf Sub-Case IIb. $\boxed{N \ll N_2 \sim N_1}$:}
In this Sub-case, to obtain \eqref{bt3} we need to estimate the term
 \begin{equation}\label{Y}
 \mathcal{Y}:=\sum_{N \ll N_2\sim N_1} \int \dfrac1{N^{1-s}} u_{N_1}u_{N_2} v_{N}.
 \end{equation}
 
 Using Cauchy-Schwartz and Bernstein's inequality \eqref{bern-1}, we obtain from \eqref{Y} that
 \begin{equation}\label{bt4}
 \begin{split}
 \mathcal{Y}\leq &\sum_{N \ll N_2\sim N_1}\dfrac1{N^{1-s}} \|u_{N_1}\|_{L^2}\| u_{N_2}\|_{L^2} \|v_{N}\|_{L^\infty}\\
 \leq &\sum_{N \ll N_2\sim N_1}\dfrac1{N^{1-s}} \|u_{N_1}\|_{L^2}\| u_{N_2}\|_{L^2} N^{1/2}\|v_{N}\|_{L^2}\\
 \lesssim &\sum_{N \ll N_2\sim N_1}\dfrac{N^{1/2}}{N^{1-s}} N_1^{s}\|u_{N_1}\|_{L^2}N_2^{s}\| u_{N_2}\|_{L^2} \|v_{N}\|_{L^2}N_1^{-s}N_2^{-s}\\
 \lesssim &\sum_{N \ll N_2\sim N_1}\dfrac{N^{1/2} N_1^{-s}N_2^{-s}}{N^{1-s}} \|u_{N_1}\|_{H^s}\| u_{N_2}\|_{H^s} \|v_{N}\|_{L^2}.
 \end{split}
 \end{equation}
 
 Opening the sum in \eqref{bt4}, using H\"older's inequality and Lemma \ref{lem32}, we have
 \begin{equation}\label{bt4.2}
 \begin{split}
 \mathcal{Y}
 \lesssim &\sum_{N_1 \gg 1} N_1^{-s}\|u_{N_1}\|_{H^s}\sum_{ N_2\lesssim N_1} N_2^{-s}\|u_{N_2}\|_{H^s} \Big(\sum_{ N\lesssim N_1}\| v_N\|_{L^2} \dfrac{1}{N^{1/2-s}}\Big)
 \\
 \lesssim &\sum_{N_1 \gg 1} N_1^{-s}\|u_{N_1}\|_{H^s}\sum_{ N_2\lesssim N_1} N_2^{-s}\|u_{N_2}\|_{H^s}\Big(\| v\|_{M^{0,p'}}\Big(\sum_{ N\lesssim N_1} \dfrac1{N^{p(1/2-s)}}\Big)^{1/p}\Big).
 \end{split}
 \end{equation}
 
 Observe that, by Lemma \ref{lem32} we get
 \begin{equation}\label{bt5}
 \Big(\sum_{ N\lesssim N_1} \dfrac1{N^{p(1/2-s)}}\Big)^{1/p} \lesssim
 \begin{cases}
 1 \quad \textrm{if}\quad s<1/2,\\
 \left( \ln N_1\right)^{1/p}\quad \textrm{if}\quad s=1/2,\\
 N_1^{s-1/2}\quad \textrm{if}\quad  s>1/2.
 \end{cases}
 \end{equation}
 
 Taking in consideration \eqref{bt5} we analyse \eqref{bt4.2} taking the following three different cases.
 \\
 
 \noindent
 {\bf Sub-Case IIb i) $\boxed{s>1/2}$:}
 Using \eqref{bt5}, H\"older's inequality and Lemma \ref{lem32}, we have
\begin{equation*}
 \begin{split}
 \mathcal{Y}
 \lesssim &\| v\|_{M^{0,p'}} \, \sum_{N_1 \geq 1} \dfrac1{N_1^{1/2}}\|u_{N_1}\|_{H^s} \Big(\sum_{ N_2\lesssim N_1}  \dfrac1{N_2^{sp'}}\Big)^{1/p'}\|u_{2}\|_{M^{s,p}}\\
 \lesssim & \| v\|_{M^{0,p'}} \|u_{2}\|_{M^{s,p}}\|u_{1}\|_{M^{s,p}}\, \Big(\sum_{N_1 \geq 1} \dfrac1{N_1^{p'/2}}\Big)^{1/p'}\\
 \lesssim & \| v\|_{M^{0,p'}} \|u_{2}\|_{M^{s,p}}\|u_{1}\|_{M^{s,p}}.
 \end{split}
 \end{equation*}

\noindent
{\bf Sub-Case IIb ii) $\boxed{s=1/2}$:}
In this case too using \eqref{bt5} and H\"older's inequality, we  obtain from \eqref{bt4.2} that
\begin{equation*}
 \begin{split}
 \mathcal{Y}
 \lesssim &\| v\|_{M^{0,p'}} \, \sum_{N_1 \geq 1} N_1^{-1/2}\left( \ln N_1\right)^{1/p}\|u_{N_1}\|_{H^s}\Big(\sum_{ N_2\lesssim N_1} \frac1{N_2^{p'/2}}\Big)^{1/p'}\|u_{2}\|_{M^{s,p}}\\
 \lesssim &\| u_2\|_{M^{1/2,p}} \|u_1\|_{M^{1/2,p}}  \|v\|_{M^{0,p'}} \left(\sum_{ N_1\geq 1} \frac{(\ln N_1)^{p'/p}}{N_1^{p'/2}}\right)^{1/p'}\\
 \lesssim &\| u_2\|_{M^{1/2, p}} \|u_1\|_{M^{1/2,p}}  \|v\|_{M^{0,p'}}.
 \end{split}
 \end{equation*}
 
 \noindent
{\bf Sub-Case IIb iii) $\boxed{0<s<1/2}$:} Again using \eqref{bt5}, H\"older's inequality, the estimate \eqref{bt4.2} yields
\begin{equation*}
 \begin{split}
 \mathcal{Y}
 \lesssim &\| v\|_{M^{0,p'}} \, \sum_{N_1 \geq 1} N_1^{-s}\|u_{N_1}\|_{H^s}\sum_{ N_2\lesssim N_1}  N_2^{-s}\|u_{N_2}\|_{H^s} \\
 \lesssim & \| v\|_{M^{0,p'}} \, \sum_{N_1 \geq 1} N_1^{-s}\|u_{N_1}\|_{H^s}\Big(\sum_{ N_2\lesssim N_1}  \dfrac1{N_2^{sp'}}\Big)^{1/p'}\|u_{2}\|_{M^{s,p}} \\
 \lesssim & \| v\|_{M^{0,p'}} \|u_{2}\|_{M^{s,p}} \|u_{1}\|_{M^{s,p}}\, \left(\sum_{N_1 \geq 1} \dfrac1{N_1^{sp'}}\right)^{1/p'}\\
 \lesssim & \| v\|_{M^{0,p'}} \|u_{2}\|_{M^{s,p}} \|u_{1}\|_{M^{s,p}}.
 \end{split}
 \end{equation*}
 
 Now, we move on to prove \eqref{bt} considering the case $p=1$.
 This case is easier or with the same difficulty as the previous case. In fact, considering the first inequality in \eqref{bt4.1} and applying H\"older's inequality with $p=1$ and $p'=\infty$, we have
 \begin{equation*}
 \begin{split}
 \mathcal{X} & \lesssim \sum_{N \gg 1} \dfrac{\|v_{N}\|_{L^2}}{N}\sum_{ N_1\lesssim N} \|u_{N_1}\|_{H^s} \left(\| u_{2}\|_{M^{s,1}} N^{1/2-s}\right)\\
& \lesssim  \| u_{2}\|_{M^{s,1}} \sum_{N \gg 1} \dfrac{\|v_{N}\|_{L^2}}{N^{1/2+s}}\sum_{ N_1\lesssim N} \|u_{N_1}\|_{H^s} \\
& \lesssim  \| u_{2}\|_{M^{s,1}} \| u_{1}\|_{M^{s,1}} \sum_{N \gg 1} \dfrac{\|v_{N}\|_{L^2}}{N^{1/2+s}} \\
& \lesssim  \| u_{2}\|_{M^{s,1}} \| u_{1}\|_{M^{s,1}} \|v\|_{M^{s,\infty}} \sum_{N \gg 1} \dfrac{1}{N^{1/2+s}}\\
 & \lesssim  \| u_{2}\|_{M^{s,1}} \| u_{1}\|_{M^{s,1}} \|v\|_{M^{s,\infty}}.
 \end{split}
 \end{equation*}
 
 Now, considering first inequality in \eqref{bt4.2} in the worst case when $s>\frac12$, applying H\"older's inequality with $p=1$ and $p'=\infty$ and Lemma \ref{lem32}, we obtain
 \begin{equation*}
 \begin{split}
 \mathcal{Y}
 & \lesssim \sum_{N_1 \gg 1} N_1^{-s}\|u_{N_1}\|_{H^s}\sum_{ N_2\lesssim N_1} N_2^{-s}\|u_{N_2}\|_{H^s} \Big(\| v\|_{M^{s,\infty}} \sum_{ N\lesssim N_1} \dfrac{1}{N^{1/2-s}}\Big)
 \\
 &\lesssim \| v\|_{M^{s,\infty}} \sum_{N_1 \gg 1} N_1^{-s}\|u_{N_1}\|_{H^s}\sum_{ N_2\lesssim N_1} N_2^{-s}\|u_{N_2}\|_{H^s} N_1^{s-1/2}\\
& \lesssim \| v\|_{M^{s,\infty}} \sum_{N_1 \gg 1} N_1^{-1/2}\|u_{N_1}\|_{H^s}\sum_{ N_2\lesssim N_1} N_2^{-s}\|u_{N_2}\|_{H^s}\\
& \lesssim \| v\|_{M^{s,\infty}} \| u_{2}\|_{M^{s,1}}\sum_{N_1 \gg 1} N_1^{-1/2}\|u_{N_1}\|_{H^s}\\
& \lesssim \| v\|_{M^{s,\infty}} \| u_{2}\|_{M^{s,1}}\| u_{1}\|_{M^{s,1}}.
 \end{split}
 \end{equation*}

This completes the proof  of Lemma.
\end{proof}
 
 \begin{remark}
  Observe that the end point case $s=0$ holds in all cases expect in  the {\bf Sub-Case IIb iii)}.
  \end{remark}

\begin{lemma}\label{Lema1}
For any $s > 0$ and $p\geq 1$, there is a constant $C = C_s$ such that the inequality
\begin{equation}\label{bilin-1}
\|\tau(\partial_x) \eta^2\|_{M^{s,p}} \le C \| \eta\|_{M^{s,p}}^2
\end{equation}
holds, where the operator $\tau(\partial_x)$  is  defined in (\ref{phi-D}).
\end{lemma}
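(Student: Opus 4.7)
The plan is to observe that the symbol $\tau$ enjoys the same pointwise decay as the multiplier $\omega$ of Lemma~\ref{BT1}, namely $|\tau(\xi)|\lesssim \langle\xi\rangle^{-1}$, and that this decay is precisely the property of $\omega$ exploited in the proof of Lemma~\ref{BT1}. Consequently, the bilinear estimate \eqref{bilin-1} will follow as an immediate corollary of (the proof of) Lemma~\ref{BT1}, rather than requiring any new cancellation.

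First I would establish the pointwise bound on $\tau$. Since $\gamma_1,\delta_1>0$, the denominator $\varphi(\xi)=1+\gamma_1\xi^2+\delta_1\xi^4$ satisfies $\varphi(\xi)\gtrsim 1+\xi^4\sim\langle\xi\rangle^4$. Splitting
\[
\tau(\xi)=\frac{3\xi}{4\varphi(\xi)}-\frac{\gamma\xi^3}{\varphi(\xi)}=:\tau_1(\xi)+\tau_2(\xi),
\]
I would check that $|\tau_1(\xi)|\lesssim |\xi|/\langle\xi\rangle^{4}\lesssim\langle\xi\rangle^{-3}$ and $|\tau_2(\xi)|\lesssim |\xi|^3/\langle\xi\rangle^{4}\lesssim\langle\xi\rangle^{-1}$, yielding $|\tau(\xi)|\lesssim \langle\xi\rangle^{-1}$. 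Equivalently, one has $|\tau(\xi)|\lesssim\omega(\xi)$, since for $|\xi|\lesssim 1$ both behave like $|\xi|$ and for $|\xi|\gg 1$ both decay like $|\xi|^{-1}$.

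Next I would reduce to Lemma~\ref{BT1}. Inspecting that proof, one sees that the multiplier $\omega$ enters only through the bound $\omega(\xi)\le 2/\langle\xi\rangle$ used when passing from \eqref{bt2} to the dyadic estimate \eqref{bt3}; thereafter every estimate is made on the real positive quantity $\langle\xi\rangle^{s-1}|\widehat v(\xi)||\widehat u_1(\xi_1)||\widehat u_2(\xi_2)|$. Since $|\tau(\xi)|\lesssim \langle\xi\rangle^{-1}$ by the step above, the identical chain of arguments—the duality reduction, the dyadic decomposition with $|\xi_j|\sim N_j$, the case split $N_2\ll N_1$ versus $N_2\sim N_1$ and the sub-splits $0<s<1/2$, $s=1/2$, $s>1/2$, together with Cauchy–Schwarz, Bernstein \eqref{bern-1}, and Lemma~\ref{lem32}—goes through verbatim with $\omega$ replaced by $\tau$. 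The main (and indeed only) point of attention is purely bookkeeping: since the proof of Lemma~\ref{BT1} handles the integral of the modulus, the sign factor $\operatorname{sgn}(\xi)$ implicit in $\tau/\omega$ plays no role, so no genuine obstacle arises.
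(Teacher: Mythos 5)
Your proposal is correct and follows essentially the same route as the paper: the paper's proof simply notes that $\delta_1>0$ gives $|\tau(\xi)|\lesssim \frac{|\xi|}{1+\xi^2}=\omega(\xi)$ and then invokes Lemma~\ref{BT1}, which is exactly your pointwise domination of the symbol followed by the bilinear estimate (the paper uses Lemma~\ref{BT1} as a black box via the symbol comparison rather than rerunning its proof with $\langle\xi\rangle^{-1}$, but this is an inessential difference since the proof of \eqref{bt} only uses $\omega(\xi)\lesssim\langle\xi\rangle^{-1}$). One small caution: your intermediate bound $|\tau(\xi)|\lesssim\langle\xi\rangle^{-1}$ is not by itself equivalent to $|\tau(\xi)|\lesssim\omega(\xi)$ near $\xi=0$, but your direct check that both $\tau$ and $\omega$ behave like $|\xi|$ for $|\xi|\lesssim 1$ and like $|\xi|^{-1}$ for $|\xi|\gg 1$ settles this.
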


\begin{proof}
Since  $\delta_1>0$, one can easily verify that  $|\tau(\xi)| \lesssim \frac{|\xi|}{1+\xi^2}=\omega(\xi)$.   Using this fact and the definition of the $M^{ s,p}$-norm along with Lemma \ref{BT1}, one can obtain
\begin{equation*}
\begin{split}
\|\tau(\partial_x) \eta^2\|_{M^{s,p}}  \lesssim \|\omega(\partial_x) \eta^2\|_{M^{s,p}}  \lesssim\| \eta\|_{M^{s,p}}^2,
\end{split}
\end{equation*}
as required.
\end{proof}


\begin{lemma}\label{P1}
For $s \ge 1$ and $p\geq 1$ there is a constant $C = C_s$ such that 
\begin{equation}\label{trilin-1}
\|\psi(\partial_x) \eta^3\|_{M^{s,p}} \le C \| \eta\|_{M^{s,p}} ^3.
\end{equation}
\end{lemma}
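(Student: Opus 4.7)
The plan is to reduce the cubic estimate to the bilinear estimate \eqref{bt} of Lemma~\ref{BT1}. Since $\delta_1 > 0$, the symbol satisfies the pointwise bound
$$|\psi(\xi)| = \frac{|\xi|}{1+\gamma_1\xi^2+\delta_1\xi^4} \lesssim \frac{|\xi|}{1+\xi^2} = \omega(\xi).$$
Writing $\eta^3 = \eta \cdot \eta^2$ and applying Lemma~\ref{BT1} with $u_1 = \eta$ and $u_2 = \eta^2$, I obtain
$$\|\psi(\partial_x)\eta^3\|_{M^{s,p}} \lesssim \|\omega(\partial_x)(\eta \cdot \eta^2)\|_{M^{s,p}} \lesssim \|\eta\|_{M^{s,p}} \|\eta^2\|_{M^{s,p}}.$$

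The task is then reduced to the algebra-type estimate $\|\eta^2\|_{M^{s,p}} \lesssim \|\eta\|_{M^{s,p}}^2$ for $s \geq 1$ and $p \geq 1$. To prove this I would decompose $\eta = \sum_k \Pi_k \eta$, noting that the Fourier support of $\Pi_{k_1}\eta \cdot \Pi_{k_2}\eta$ lies in a set of size $O(1)$ about $k_1+k_2$, so only pairs with $|n - k_1 - k_2| \lesssim 1$ contribute to $\Pi_n \eta^2$. Bernstein's inequality \eqref{bern-1} gives $\|\Pi_{k_1}\eta \cdot \Pi_{k_2}\eta\|_{L^2} \lesssim \|\Pi_{k_1}\eta\|_{L^\infty}\|\Pi_{k_2}\eta\|_{L^2} \lesssim \|\Pi_{k_1}\eta\|_{L^2}\|\Pi_{k_2}\eta\|_{L^2}$, and combining the triangle-type bound $\langle n\rangle^s \lesssim \langle k_1\rangle^s + \langle k_2\rangle^s$ (valid since $n \sim k_1 + k_2 + O(1)$) with Young's inequality for discrete convolutions in the $\ell^p$ norm yields
$$\|\eta^2\|_{M^{s,p}} \lesssim \|\eta\|_{M^{s,p}} \cdot \sum_k \|\Pi_k\eta\|_{L^2}.$$
The remaining $\ell^1$-sum is controlled by $\|\eta\|_{M^{s,p}}$ through H\"older's inequality in $k$, using that the weight $\langle k\rangle^{-s}$ lies in $\ell^{p'}$ whenever $s > 1/p' = 1 - 1/p$, which holds throughout the range $s \geq 1$, $1 < p < \infty$. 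For the endpoint $p = 1$ one argues directly: $\sum_k \|\Pi_k \eta\|_{L^2} \leq \sup_k \langle k\rangle^{-s} \cdot \sum_k \langle k\rangle^s\|\Pi_k \eta\|_{L^2} \leq \|\eta\|_{M^{s,1}}$ for $s \geq 0$.

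The main obstacle is the algebra estimate $\|\eta^2\|_{M^{s,p}} \lesssim \|\eta\|_{M^{s,p}}^2$. Its proof forces a careful frequency interaction analysis along the lines above, since for small $p$ (particularly $p < 2$) one cannot reduce to a known algebra property of $H^s$; the $\ell^p$ convolution structure of the modulation norm is essential. Once this product bound is in hand, the reduction to Lemma~\ref{BT1} outlined above closes the trilinear estimate.
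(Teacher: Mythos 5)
Your proof is correct, but it takes a genuinely different route from the paper's. The paper proves \eqref{trilin-1} by dualizing to the quadrilinear estimate \eqref{ab1} and running a dyadic Littlewood--Paley case analysis (trivial, non-resonant, semi-resonant, resonant), with Bernstein, H\"older and the dyadic summation Lemma \ref{lem32}; that argument works directly with the weight $\langle\xi\rangle^{s-3}$, i.e.\ it exploits the full cubic decay of $\psi$. You instead reduce to the already-proven bilinear bound \eqref{bt} via the pointwise estimate $\psi(\xi)\lesssim\omega(\xi)$ (legitimate: $\psi/\omega=(1+\xi^2)/\varphi$ is bounded and the $M^{s,p}$ norm is built from frequency-localized $L^2$ pieces), and you supply the missing ingredient, the algebra bound $\|\eta^2\|_{M^{s,p}}\lesssim\|\eta\|_{M^{s,p}}^2$, by a unit-scale (uniform, not dyadic) decomposition: the support constraint $|n-k_1-k_2|\lesssim 1$, Bernstein, $\langle n\rangle^s\lesssim\langle k_1\rangle^s+\langle k_2\rangle^s$, Young's inequality $\ell^p\ast\ell^1\subset\ell^p$, and the embedding $\sum_k\|\Pi_k\eta\|_{L^2}\lesssim\|\eta\|_{M^{s,p}}$, which holds since $\langle k\rangle^{-s}\in\ell^{p'}$ for $s>1-\frac1p$ (trivially for $p=1$); all of this is available for $s\ge1$ and $1\le p<\infty$, the same range the paper's duality argument actually covers. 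This is essentially the standard fact that $M^{s,p}$ is a multiplication algebra for $s>1-\frac1p$, and once it is in hand the appeal to Lemma \ref{BT1} is not even needed: since $\psi$ is a bounded symbol, $\|\psi(\partial_x)\eta^3\|_{M^{s,p}}\lesssim\|\eta^3\|_{M^{s,p}}\lesssim\|\eta\|_{M^{s,p}}^3$ follows from the algebra property applied twice. Your route buys brevity and modularity (no duality, no case analysis, and a reusable product estimate); the paper's route buys the smoothing of $\psi$ in quantitative form, which is what one would need to push the regularity threshold below the algebra range, and it keeps the whole section within the single duality framework shared with Lemma \ref{BT1}.
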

\begin{proof}
As in the proof of Lemma \ref{BT1}, the estimate \eqref{trilin-1} is a consequence of the following inequality
\begin{equation}\label{ab1}
\int_{\xi_1+\xi_2+\xi_3+\xi=0}\langle \xi \rangle^{s-3}\widehat{v}(\xi)\widehat{u_1}(\xi_1) \widehat{u_2}(\xi_2) \widehat{u_3}(\xi_3) 
\lesssim \|u_1\|_{M^{s,p}}\|u_2\|_{M^{s,p}}\|u_3\|_{M^{s,p}}\|v\|_{M^{0,p'}}.
\end{equation}

Let $N_1$, $N_2$, $N_3$ and $N$ be dyadic numbers such that
$$
|\xi_j|\sim N_j, j=1,2,3\quad |\xi|\sim N.
$$
Let 
$$
|\xi_{\textrm max}|= \max \{|\xi_1|, |\xi_2|, |\xi_3|\}, \quad |\xi_{\textrm min}|= \min \{|\xi_1|, |\xi_2|, |\xi_3|\},
$$
and $|\xi_{\textrm med}|$ be such that $|\xi_{\textrm max}|\geq |\xi_{\textrm med}| \geq |\xi_{\textrm min}|$. We denote by $N_{\max}\sim |\xi_{\textrm max}|$, $N_{\textrm{med}}\sim |\xi_{\textrm med}|$ and $N_{\min}\sim |\xi_{\textrm min}|$.  
 Since  $\xi=-\xi_1-\xi_2-\xi_3$ one has $|\xi|\leq 3|\xi_{\max}|$. 

Without loss of generality we can suppose that $N_{\max}=N_1\geq N_{\textrm{med}}=N_2\geq N_{\min}=N_3$. Thus, we have $N\lesssim N_1$.

In sequel we prove \eqref{ab1} considering four different cases.\\

\noindent
{\bf Case I  (Trivial Case). $\boxed{N_{\max} \leq 1}$:}
In this case,  using $u_N=P_Nu$ and Plancherel's inequality, to obtain \eqref{ab1} we need to estimate the following expression
 \begin{equation}\label{X1}
 \mathcal{X}_1:=\sum_{N, N_1 \lesssim 1} \int N^{s-3} u_{N_1}u_{N_2} u_{N_3}v_{N}.
 \end{equation}
 
 Using H\"older's inequality and Bernstein's inequality \eqref{bern-1}, we obtain from \eqref{X1} that
 \begin{equation}\label{ab2}
 \begin{split}
 \mathcal{X}_1
 \leq &\sum_{N, N_{\max} \lesssim 1}N^{s-3} \|u_{N_1}\|_{L^2}N_2^{1/2}\| u_{N_2}\|_{L^2} N_3^{1/2}\| u_{N_3}\|_{L^2}\|v_{N}\|_{L^2}\\
 \lesssim &\sum_{N, N_{\max} \lesssim 1}N^{s-3}N_1 N_1^{s}\|u_{N_1}\|_{L^2}N_2^{s}\| u_{N_2}\|_{L^2} N_3^{s}\| u_{N_3}\|_{L^2}\|v_{N}\|_{L^2}N_1^{-s}N_2^{-s}N_3^{-s}\\
 \lesssim &\sum_{N, N_{\max} \lesssim 1}\|u_{N_1}\|_{H^s}\| u_{N_2}\|_{H^s} \| u_{N_3}\|_{H^s} \|v_{N}\|_{L^2}.
 \end{split}
 \end{equation}
 
 Now, opening the sum in \eqref{ab2}, using H\"older's inequality and Lemma \ref{lem32}, we have
 \begin{equation}\label{1xbt4}
 \begin{split}
 \mathcal{X}_1
 \lesssim &\sum_{N \lesssim 1} \|v_{N}\|_{L^2}\sum_{N_1 \lesssim 1}\|u_{N_1}\|_{H^s}\sum_{ N_2\lesssim 1}  \| u_{N_2}\|_{H^s}\sum_{ N_3\leq 1} \| u_{N_3}\|_{H^s}
  \\
   \lesssim &\| u_3\|_{M^{s,p}}\| u_2\|_{M^{s,p}} \|u_1\|_{M^{s,p}}  \|v\|_{M^{0,p'}},
 \end{split}
 \end{equation}
since the sums in \eqref{1xbt4} are finite. 

From now on, we will consider that $N_{\max}\gg1$ to deal with the following cases.\\

\noindent
{\bf Case II (Non-resonant case). $\boxed{N_{\max} \gg N_{\mathrm{med}}}$:} In this case we have $N \sim N_{\max}$. So, to obtain \eqref{ab1} we need to estimate the expression
 \begin{equation}\label{X2}
 \mathcal{X}_2:=\sum_{N\sim N_1 \gg N_2\geq N_3} \int N^{s-3} u_{N_1}u_{N_2} u_{N_3}v_{N}.
 \end{equation}
 
 Using H\"older's inequality and Bernstein's inequality \eqref{bern-1}, we obtain from \eqref{X2} that
 \begin{equation}\label{ab3}
 \begin{split}
 \mathcal{X}_2
 \leq &\sum_{N\sim N_1 \gg N_2\geq N_3}N^{s-3} \|u_{N_1}\|_{L^2}N_2^{1/2}\| u_{N_2}\|_{L^2} N_3^{1/2}\| u_{N_3}\|_{L^2}\|v_{N}\|_{L^2}\\
 \lesssim &\sum_{N\sim N_1 \gg N_2\geq N_3}N^{s-3}N N_1^{s}\|u_{N_1}\|_{L^2}N_2^{s}\| u_{N_2}\|_{L^2} N_3^{s}\| u_{N_3}\|_{L^2}\|v_{N}\|_{L^2}N_1^{-s}N_2^{-s}N_3^{-s}\\
 \lesssim &\sum_{N\sim N_1 \gg N_2\geq N_3}N^{-2}N_2^{-s}N_3^{-s}\|u_{N_1}\|_{H^s}\| u_{N_2}\|_{H^s} \| u_{N_3}\|_{H^s} \|v_{N}\|_{L^2}.
 \end{split}
 \end{equation}
 
 Now, opening the sum in \eqref{ab3}, using H\"older's inequality and Lemma \ref{lem32}, we have
 \begin{equation*}
 \begin{split}
 \mathcal{X}_2
 \lesssim &\sum_{N } N^{-2}\|v_{N}\|_{L^2}\sum_{N_1 \lesssim N}\|u_{N_1}\|_{H^s}\sum_{ N_2\lesssim N}  N_2^{-s}\| u_{N_2}\|_{H^s}\sum_{ N_3\leq N}N_3^{-s} \| u_{N_3}\|_{H^s}\\
   \lesssim &\| u_3\|_{M^{s,p}}\| u_2\|_{M^{s,p}} \|u_1\|_{M^{s,p}} \sum_{N } N^{-2}(\ln N)^{1/p'}\|v_{N}\|_{L^2}\\
   \lesssim &\| u_3\|_{M^{s,p}}\| u_2\|_{M^{s,p}} \|u_1\|_{M^{s,p}} \|v\|_{M^{0,p'}}\left(\sum_{N } N^{-2p}(\ln N)^{p/p'}\right)^{1/p}\\
    \lesssim &\| u_3\|_{M^{s,p}}\| u_2\|_{M^{s,p}} \|u_1\|_{M^{s,p}} \|v\|_{M^{0,p'}}.
 \end{split}
 \end{equation*}
 
 \noindent
{\bf Case III (Semi-resonant case). $\boxed{N_{\max} \sim N_{\textrm{med}}\gg N_{\min}}$:} We analyse this case further dividing in two sub-cases.\\

\noindent
{\bf Sub-Case IIIa. $\boxed{N \ll N_{\max}}:$} In this case we need to estimate
 \begin{equation}\label{X3}
 \mathcal{X}_3:=\sum_{N_{\max} \sim N_{\textrm{med}}\gg N_{\min}} \int N^{s-3} u_{N_1}u_{N_2} u_{N_3}v_{N}.
 \end{equation}
 
 Using H\"older's inequality and Bernstein's inequality \eqref{bern-1} in \eqref{X3}, we obtain
 \begin{equation}\label{ab4}
 \begin{split}
 \mathcal{X}_3
 \leq &\sum_{N_1\sim N_2 \gg N_3,N}N^{s-3} \|u_{N_1}\|_{L^2}N_2^{1/2}\| u_{N_2}\|_{L^2} N_3^{1/2}\| u_{N_3}\|_{L^2}\|v_{N}\|_{L^2}\\
 \lesssim &\sum_{N_1\sim N_2 \gg N_3,N}N^{s-3}N_1^{-s}N_2^{1/2-s}N_3^{1/2-s}\|u_{N_1}\|_{H^s}\| u_{N_2}\|_{H^s} \| u_{N_3}\|_{H^s} \|v_{N}\|_{L^2}.
 \end{split}
 \end{equation}
 
If $s>3$, opening the sum in \eqref{ab4}, using H\"older's inequality and Lemma \ref{lem32}, we have
 \begin{equation*}
 \begin{split}
 \mathcal{X}_3
 \lesssim &\sum_{N_1 } N_1^{-s}\|u_1\|_{M^{s,p}}  \sum_{N_2 \lesssim N_1}N_2^{1/2-s}\|u_{N_2}\|_{H^s}\sum_{ N\lesssim N_1}  N^{s-3}\|v_{N}\|_{L^2}\sum_{ N_3\leq N_1}N_3^{1/2-s} \| u_{N_3}\|_{H^s}\\
   \lesssim &\| u_3\|_{M^{s,p}}\| v\|_{M^{0,p'}} \sum_{N_1 } N_1^{-s}N_1^{s-3}\|u_1\|_{M^{s,p}}  \sum_{N_2 \lesssim N_1}N_2^{1/2-s}\|u_{N_2}\|_{H^s}\\
   \lesssim &\| u_3\|_{M^{s,p}}\|v\|_{M^{0,p'}}\| u_2\|_{M^{s,p}} \|u_1\|_{M^{s,p}} \Big(\sum_{N_1 } N_1^{-3p'}\Big)^{1/p'}\\
    \lesssim &\| u_3\|_{M^{s,p}}\|v\|_{M^{0,p'}}\| u_2\|_{M^{s,p}} \|u_1\|_{M^{s,p}}.
 \end{split}
 \end{equation*}
 
If $1<s\leq 3$, we can use  Lemma \ref{lem32} in $\sum_{ N\lesssim N_1}  N^{s-3}\|v_{N}\|_{L^2}$ and the required result follows.\\

\noindent
{\bf Sub-Case IIIb.  $\boxed{N \sim N_{\max}}$:}
In this case we have $N_1\sim N_2\sim N\gg N_3$. This case is similar to the the Case II so we omit the details.\\

\noindent
{\bf Case IV  (Resonant case). $\boxed{N_{\max} \sim  N_{\min}}$:}
In this case we have
$$
N\lesssim N_1\sim N_2\sim N_3, 
$$
 and we  proceed as in {\bf Sub-Case IIIa} and this completes the proof in all possible cases.
\end{proof}


\begin{lemma}
For $s > 1$, the following estimate
\begin{equation}\label{Sharp1}
\|\psi(\partial_x) \eta_x^2\|_{M^{s,p}} \le C \| \eta\|_{M^{s,p}}^2
\end{equation}
holds.
\end{lemma}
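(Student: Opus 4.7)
The plan is to mirror the Littlewood--Paley/duality strategy that drives the proofs of Lemma \ref{BT1} and Lemma \ref{P1}. First, by duality $(M^{s,p})' = M^{-s,p'}$ together with Plancherel, the estimate \eqref{Sharp1} is equivalent to the trilinear bound
\begin{equation*}
\int_{\xi+\xi_1+\xi_2=0}|\xi_1||\xi_2|\langle\xi\rangle^{s}|\psi(\xi)||\widehat{\eta}(\xi_1)||\widehat{\eta}(\xi_2)||\widehat{v}(\xi)|\,d\xi_1\,d\xi_2\lesssim \|\eta\|_{M^{s,p}}^2\|v\|_{M^{0,p'}}.
\end{equation*}
Since $\gamma_1,\delta_1>0$, we have $|\psi(\xi)|\lesssim \langle\xi\rangle^{-3}$, so the effective multiplier is controlled by $K(\xi,\xi_1,\xi_2) \lesssim |\xi_1||\xi_2|/\langle\xi\rangle^{3-s}$.

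Next, I would decompose dyadically with $|\xi_j|\sim N_j$, $|\xi|\sim N$, assume WLOG $N_1\geq N_2$, and split into three cases exactly as in Lemma \ref{BT1}: Case I ($N_2\ll N_1$, so $N\sim N_1$); Sub-case IIa ($N\sim N_1\sim N_2$); and Sub-case IIb ($N\ll N_1\sim N_2$). In Cases I and IIa one has $N\sim N_{\max}$, so that the smoothing by $\psi$ contributes a factor $\lesssim N_1^{s-2}$; applying H\"older in physical space (say $L^2\cdot L^\infty\cdot L^2$) with Bernstein's inequality \eqref{bern-1} on the lower-frequency factor, and then converting $L^2$-norms to weighted $H^s$-norms via $\|u_{N_j}\|_{L^2} = N_j^{-s}\|u_{N_j}\|_{H^s}$, one arrives at sums of the form
\begin{equation*}
\sum_{N_2\ll N_1\sim N} \frac{N_2^{3/2-s}}{N_1^2}\|u_{N_1}\|_{H^s}\|u_{N_2}\|_{H^s}\|v_N\|_{L^2},
\end{equation*}
which are closed by iterated H\"older in the dyadic indices combined with Lemma \ref{lem32}, with the usual logarithmic subtleties at $s=1/2$ and $s=3/2$.

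The main obstacle will be Sub-case IIb, the high-high-to-low resonant interaction, where the kernel is as large as $N_1^2/N^{3-s}$ and neither derivative from $\eta_x^2$ lands on the output. Using H\"older with Bernstein on $v_N$ in $L^\infty$ yields
\begin{equation*}
\sum_{N\ll N_1\sim N_2}\frac{N_1^{2-2s}}{N^{5/2-s}}\|u_{N_1}\|_{H^s}\|u_{N_2}\|_{H^s}\|v_N\|_{L^2}.
\end{equation*}
The hypothesis $s>1$ is exactly what gives the decay $N_1^{-(2s-2)}$ needed in the high frequencies, while for $s<5/2$ the factor $N^{s-5/2}$ keeps the low-frequency sum convergent. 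One collapses the diagonal constraint $N_2\sim N_1$ (only $O(1)$ dyadic neighbors) by AM--GM and closes the estimate by H\"older in $\ell^p$ on $N_1$ and in $\ell^{p'}$ on $N$, using the embedding $M^{s,p}\hookrightarrow H^s$ for $p\leq 2$ when necessary. For $s\geq 5/2$ one rebalances via $\langle\xi\rangle^{s-3}\lesssim N_1^{s-3}$, so that $K\lesssim N_1^{s-1}$ and the $H^s$-conversion after Bernstein gives a summable factor $N_1^{-(1/2+s)}$, again thanks to $s>1$. In short, the threshold $s>1$ is precisely what is required to absorb the two extra derivatives carried by $\eta_x^2$ beyond the smoothing gain of $\psi(\partial_x)$, and the analysis otherwise proceeds in perfect analogy with Lemma \ref{BT1}.
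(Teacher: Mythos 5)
Your argument is essentially correct, but it takes a much longer route than the paper. The paper disposes of this lemma in two lines: since $\psi(\xi)\lesssim \omega(\xi)/\langle\xi\rangle$, one has $\|\psi(\partial_x)\eta_x^2\|_{M^{s,p}}\lesssim\|\omega(\partial_x)\eta_x^2\|_{M^{s-1,p}}$, and then Lemma \ref{BT1} applied to the product $\eta_x\cdot\eta_x$ at regularity $s-1>0$ gives $\lesssim\|\eta_x\|_{M^{s-1,p}}^2\lesssim\|\eta\|_{M^{s,p}}^2$; in other words, the two extra derivatives are absorbed by lowering the regularity index by one, which is exactly why the hypothesis $s>1$ appears. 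You instead rerun the entire duality--Littlewood--Paley--Bernstein machinery of Lemma \ref{BT1} at the trilinear level with the kernel $|\xi_1||\xi_2|\langle\xi\rangle^{s-3}$; your case analysis and the resulting dyadic sums are correct and do close for $s>1$, and your discussion makes transparent where the threshold $s>1$ enters (the high-high-to-low interaction with weight $N_1^{2-2s}$). What your approach buys is this explicit frequency-interaction picture and, in principle, room to sharpen individual cases; what it costs is redundancy, since all of this work is already encapsulated in Lemma \ref{BT1}. Two small points to tidy up if you keep the direct route: (i) in Sub-case IIb with $5/2\le s<3$ the rebalancing inequality $\langle\xi\rangle^{s-3}\lesssim N_1^{s-3}$ is false when $N\ll N_1$ (the exponent is negative, so the low frequency dominates); no rebalancing is needed there, because summing $N^{s-5/2}$ over $N\lesssim N_1$ via Lemma \ref{lem32} costs at most $N_1^{s-5/2}$ (or a logarithm at $s=5/2$), which combined with $N_1^{2-2s}$ still yields the summable exponent $-s-\tfrac12$; and (ii) the duality $(M^{s,p})'=M^{-s,p'}$ requires $p>1$, so the endpoint $p=1$ needs the separate $\ell^1\times\ell^\infty$ H\"older treatment, exactly as in the last part of the proof of Lemma \ref{BT1}.
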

\begin{proof}
Observe that
$$
\psi(\xi) \lesssim \frac{\omega(\xi) }{\langle \xi \rangle}.
$$
The inequality (\ref{bt})  then allows the conclusion 
\begin{equation*}
\|\psi(\partial_x) \eta_x^2\|_{M^{s,p}}  \lesssim \| \omega(\partial_x) \eta_x^2\|_{M^{s-1,p}} \lesssim \|  \eta_x\|_{M^{s-1, p}}\|  \eta_x\|_{M^{s-1, p}}
\lesssim  \|  \eta\|_{M^{s,p}}^2,
\end{equation*}
since  $s-1 > 0$.
\end{proof}

The preceding ingredients are now assembled to provide a proof of the local well-posedness result stated in Theorem \ref{mainTh1}.

\begin{proof}[Proof of Theorem \ref{mainTh1}] Let $s>1$, $p\geq 1$ and $\eta_0\in M^{s,p}$ be given.
 Define a mapping
\begin{equation}\label{eq3.42}
\Psi\eta(x,t) = S(t)\eta_0 -i\int_0^tS(t-t')\Big(\tau(D_x)\eta^2 - \frac14 \psi(\partial_x)\eta^3
 -\frac7{48} \psi(\partial_x)\eta_x^2\Big)(x, t') dt'.
\end{equation}
The immediate goal is to show that this mapping is a contraction on a closed ball $\mathcal{B}_r$ with radius $r > 0$ and center at the origin in $C([0,T];M^{s,p})$.

As remarked earlier,  $S(t)$ is a unitary group in $M^{s,p}(\R)$  (see (\ref{eq1.11})), and therefore
\begin{equation*}
\|\Psi\eta\|_{M^{s,p}} \leq \|\eta_0\|_{M^{s,p}} +CT\Big[\big{\|}\tau(\partial_x)\eta^2 - \frac18 \psi(\partial_x)\eta^3
 -\frac7{48}\psi(\partial_x)\eta_x^2\big{\|}_{C([0,T];M^{s,p})}\Big].
\end{equation*}
The inequalities (\ref{bilin-1}), (\ref{trilin-1}) and (\ref{Sharp1}) lead immediately to 
\begin{equation}\label{eq3.44}
\|\Psi\eta\|_{M^{s,p}} \leq \|\eta_0\|_{M^{s,p}} +CT\Big[\big{\|}\eta\big{\|}_{C([0,T];M^{s,p})}^2 + \big{\|}\eta\big{\|}_{C([0,T];M^{s,p})}^3 +\big{\|}\eta\big{\|}_{C([0,T];M^{s,p})}^2\Big].
\end{equation}
If, in fact,  $\eta\in \mathcal{B}_r$, then (\ref{eq3.44}) yields
\begin{equation*}
\|\Psi\eta\|_{M^{s,p}} \leq \|\eta_0\|_{M^{s,p}} +CT\big[2r +r^2 \big]r.
\end{equation*}

If we choose $r= 2\|\eta_0\|_{H^s}$ and $T= \frac1{2Cr(2 + r) }$, then  $\|\Psi\eta\|_{M^{s,p}} \leq r$, showing that $\Psi$ maps
 the closed ball $\mathcal{B}_r$ in $C([0,T];M^{s,p})$ onto itself.   With the same choice of $r$ and $T$ and the same sort of 
estimates, one discovers that $\Psi$ is a contraction on $\mathcal{B}_r$ with contraction constant equal to $\frac12$ as it happens. The rest of
the proof is standard.
\end{proof}

\begin{remark}\label{rm2.1}
The following points follow immediately from the proof of the Theorem \ref{mainTh1}:
\begin{enumerate}
\item The maximal existence time $T_s$ of the solution satisfies
\begin{equation}\label{r2.45}
T_s\geq \bar{T} = \frac1{8C_s\|\eta_0\|_{M^{s,p}}(1+\|\eta_0\|_{M^{s,p}})},
\end{equation}
where the constant $C_s$ depends only on $s$.
\item The solution cannot grow too much on the interval $[0,\bar T]$ since
\begin{equation}\label{r2.46}
\|\eta(\cdot,t)\|_{M^{s,p}} \leq r =  2\|\eta_0\|_{M^{s,p}}
\end{equation}
for  $t $ in this interval,  where $\bar{T}$ is as above in (\ref{r2.45}).
\end{enumerate}
\end{remark}

\section{Global well-posedness in $M^{s,p}$ spaces}\label{sec-4}

In this section, we will use the splitting argument introduced in \cite{B1, B} and way earlier in \cite{BS} to get the global solution of the IVP \eqref{5kdvbbm} for given data in the modulation space stated in Theorem \ref{Th-global}. For this, we proceed as follows.

Let $\eta_0 \in M^{s,p}$, $s\geq 1$, $p\geq 1$ and  $N$ be a large number to be chosen later.
We split the initial data $\eta_0=u_0+v_0$ where  $\widehat{u_0}=\widehat{\eta_0} \chi_{\{|\xi| \leq N\}}$ and  $\widehat{v_0}=\widehat{\eta_0} \chi_{\{|\xi| > N\}}$. 
Using H\"older's inequality and \eqref{immersion1}, we have, for any $\kappa \in \R$ and $p\geq 2$
\begin{equation}\label{est-4.1}
\|u_0\|_{{M}^{\kappa, p}} \lesssim \|u_0\|_{H^{\kappa}} \lesssim \|u_0\|_{{M}^{\kappa, p}} N^{\frac12-\frac1p}.
\end{equation}

It  is easy to verify that $ u_0 \in M^{\delta,p}$ for any  $\delta \geq s$ and $v_0 \in M^{s,p}$. 
In fact, we have
\begin{equation}\label{31}
\begin{cases}
\|u_0\|_{L^2}\leq \|\eta_0\|_{L^2},\\
\|u_0\|_{{M}^{\delta, p}}\lesssim \|\eta_0\|_{{M}^{s,p}}\,N^{\delta-s},\qquad \delta\geq s, \;\; p\geq 1,\\
\|u_0\|_{H^{\delta}}\lesssim \|\eta_0\|_{{M}^{s,p}}\, N^{\frac12-\frac1p}N^{\delta-s},\quad \delta\geq s,\quad p\geq 2.
\end{cases}
\end{equation}
and
\begin{equation}\label{32}
\begin{cases}
\|v_0\|_{M^{\rho,p}}\lesssim\|\eta_0\|_{M^{s,p}}\,N^{(\rho-s)}, \qquad 0\leq\rho\leq s,\\
\|v_0\|_{H^{\rho}} \lesssim\|\eta_0\|_{M^{s,p}}\,N^{(\rho-s)}, \qquad s-\rho>\frac12-\frac1p,
\end{cases}
\end{equation}
where in the last estimate H\"older's inequality was used.

For the low frequency part  $u_0$  of $\eta_0$ we associate the IVP
\begin{equation}\label{xeq1}
\begin{cases}
iu_t = \phi(\partial_x)u +F(u),\\
 u(x,0) = u_0(x),
 \end{cases}
\end{equation}
where $F(u)= \tau (\partial_x)u^2 - \frac18\psi(\partial_x)u^3  -\frac7{48}\psi(\partial_x)u_x^2\,$ and for the high frequency part  $
v_0$  of $\eta_0$ we associate the IVP
\begin{equation}\label{xeq2}
\begin{cases}
iv_t = \phi(\partial_x)v + F(u+v)-F(u).\\
 v(x,0) = v_0(x),
 \end{cases}
\end{equation}

Observe that, if $u$ and $v$ are   solutions to the IVPs \eqref{xeq1} and \eqref{xeq2} respectively, then $\eta(x,t)=u(x,t)+v(x,t)$ solves the original IVP \eqref{eq1.9} in the common time interval of existence of $u$ and $v$. Taking in consideration this observation, first we will prove that there exists a time $T_u$ such that the IVP \eqref{xeq1} is locally well-posed in $[0, T_u]$. Now, fixing the solution $u$ of the IVP \eqref{xeq1}, we prove that there exists a time $T_v$ such that the IVP \eqref{xeq2} is locally well-posed in $[0, T_v]$. Hence, if we consider $t_0\leq\min\{T_u, T_v\}$, then  $\eta=u+v$ solves the IVP \eqref{eq1.9} in the time interval $[0, t_0]$ for given data in $M^{s, p}$, $s> 1$, $p\geq 2$.  This restriction $p\geq 2$ is necessary to control the existence time $t_0$ using estimate involving energy, see \eqref{xeq5} below. For $1<s<2$, our objective is to iterate this process  maintaining the common local existence time $t_0$  in each iteration  to cover any given time interval $[0, T]$ there by getting global well-posedness result (see {\bf Case 1} in the proof of Theorem \ref{Th-global} below). However, for $s\geq 2$, we derive a uniform {\em a priori} estimate for $\|\eta\|_{M^{s,p}}$-norm  in terms of $\|M^{s_0, p}$-norm for some $1\leq\frac32-\frac1p\leq s_0 <2$ fixed, and use it to get the required global  result (see {\bf Case 2} in the proof of Theorem \ref{Th-global} below)..

Notice first that, from Theorem \ref{mainTh1} the IVP  \eqref{xeq1} is locally well-posed in $M^{s,p}$, $s > 1$ with existence time given by $T_{u} =\dfrac{c_s}{\|u_0\|_{M^{s,p}}(1+\|u_0\|_{M^{s,p}})}$ (see \eqref{r2.45} in Remark \ref{rm2.1}).  Now, fixing the local solution of the IVP \eqref{xeq1}, we consider the IVP \eqref{xeq2} with variable coefficients that depend on $u$, and prove  the following local well-posedness result.

\begin{theorem}\label{mainTh4}  Let $\gamma_1, \delta_1 >0$ and $u$ be the solution to the IVP \eqref{xeq1}.  
For any $s> 1$ and   $v_0\in M^{s,p}(\R)$, there exist a time $$T_v =\dfrac{c_s}{(\|v_0\|_{M^{s,p}}+\|u_0\|_{M^{s,p}})(1+\|v_0\|_{M^{s,p}}+\|u_0\|_{M^{s,p}})}$$
 and a unique function  $v \in C([0,T_v];M^{s,p})$ which 
 is a solution of the IVP \eqref{xeq2}.  The solution $v$ 
varies continuously in $C([0,T_v];M^{s,p})$ as $v_0$ varies in $M^{s,p}$.
\end{theorem}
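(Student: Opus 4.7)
The plan is to apply the contraction mapping principle, following the template of the proof of Theorem~\ref{mainTh1} but with a nonlinearity that now depends on the already-constructed low-frequency solution $u$. Writing \eqref{xeq2} in Duhamel form, I would seek a fixed point of
\[
\Psi v(t) := S(t)v_0 - i\int_0^t S(t-t')\bigl[F(u(t')+v(t')) - F(u(t'))\bigr]\,dt'
\]
in the closed ball $\mathcal{B}_r = \{v \in C([0,T_v]; M^{s,p}) : \|v\|_{C([0,T_v];M^{s,p})} \leq r\}$, with $r$ and $T_v$ to be tuned. The key structural point exploited throughout is that Remark~\ref{rm2.1} provides the uniform a priori bound $\|u(t)\|_{M^{s,p}} \leq 2\|u_0\|_{M^{s,p}}$ on its interval of existence $[0,T_u]$, so $u$ may be treated as a given coefficient of controlled size.

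Next, I would expand
\[
F(u+v) - F(u) = \tau(\partial_x)\bigl(2uv + v^2\bigr) - \tfrac{1}{8}\psi(\partial_x)\bigl(3u^2 v + 3u v^2 + v^3\bigr) - \tfrac{7}{48}\psi(\partial_x)\bigl(2 u_x v_x + v_x^2\bigr),
\]
noting that every summand is a bi- or tri-linear expression containing at least one factor of $v$. I would then re-read the proofs of Lemmas~\ref{BT1}, \ref{Lema1}, \ref{P1} and of estimate~\eqref{Sharp1} as the genuine multilinear statements they actually establish (their arguments already treat the inputs as distinct functions), obtaining
\[
\|\tau(\partial_x)(fg)\|_{M^{s,p}} \lesssim \|f\|_{M^{s,p}}\|g\|_{M^{s,p}}, \qquad \|\psi(\partial_x)(f_x g_x)\|_{M^{s,p}} \lesssim \|f\|_{M^{s,p}}\|g\|_{M^{s,p}},
\]
together with $\|\psi(\partial_x)(fgh)\|_{M^{s,p}} \lesssim \|f\|_{M^{s,p}}\|g\|_{M^{s,p}}\|h\|_{M^{s,p}}$ for $s>1$. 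Collecting contributions and writing $A := \|u_0\|_{M^{s,p}}$, $B := \|v_0\|_{M^{s,p}}$, I expect to arrive at
\[
\bigl\|F(u+v) - F(u)\bigr\|_{M^{s,p}} \lesssim (A+B)\bigl(1+A+B\bigr)\,\|v\|_{C([0,T_v];M^{s,p})}
\]
for any $v \in \mathcal{B}_r$ with $r \leq 2B$, after using the uniform bound $\|u(t)\|_{M^{s,p}} \lesssim A$.

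Using the $M^{s,p}$-isometry of $S(t)$ recorded in \eqref{eq1.11}, this translates into
\[
\|\Psi v\|_{C([0,T_v];M^{s,p})} \leq B + C\,T_v\,(A+B)(1+A+B)\,r,
\]
so choosing $r = 2B$ together with $T_v = c_s/[(A+B)(1+A+B)]$ for a sufficiently small constant $c_s$ makes $\Psi$ map $\mathcal{B}_r$ into itself. An essentially identical computation on the difference $\Psi v_1 - \Psi v_2$, using the telescoping identities $(u+v_1)^k - (u+v_2)^k = (v_1-v_2)\sum_{j=0}^{k-1}(u+v_1)^{k-1-j}(u+v_2)^j$ and $(u+v_1)_x^2 - (u+v_2)_x^2 = (v_1-v_2)_x\,[(u+v_1)_x + (u+v_2)_x]$ to factor out $v_1-v_2$ in each nonlinear piece, then yields a contraction with constant $\tfrac{1}{2}$ after possibly shrinking $c_s$. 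Uniqueness on $[0,T_v]$ and continuous dependence of $v$ on $v_0$ in $C([0,T_v];M^{s,p})$ follow from the standard Banach fixed-point machinery.

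The main obstacle I anticipate is essentially bookkeeping: the mixed bi- and tri-linear terms generate contributions of shape $AB$, $B^2$, $A^2 B$, $AB^2$ and $B^3$, and these must be repackaged into the single factor $(A+B)(1+A+B)\cdot B$ that produces precisely the existence time announced in the theorem, using the elementary inequalities $A^2B + AB^2 + B^3 \leq (A+B)^2 B$ and $AB + B^2 \leq (A+B)B$. One also has to check that the argument stays inside the interval of existence of $u$; since $T_v$ depends on $A+B$ rather than on $A$ alone, it suffices to replace $T_v$ by $\min\{T_u, T_v\}$ if necessary, which affects only the constant $c_s$.
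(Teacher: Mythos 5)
Your proposal is correct and follows essentially the same route as the paper: the same Duhamel formulation with $u$ treated as a fixed coefficient controlled by the a priori bound \eqref{r2.46}, the same expansion of $F(u+v)-F(u)$, the same use of the bilinear and trilinear estimates \eqref{bilin-1}, \eqref{trilin-1}, \eqref{Sharp1} (read, as you note, in their genuinely multilinear form), and the same choices $r=2\|v_0\|_{M^{s,p}}$ and $T_v\sim c_s/[(A+B)(1+A+B)]$ for the contraction on the ball in $C([0,T_v];M^{s,p})$. No substantive differences to report.
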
 
\begin{proof}
Using  Duhamel's formula, the equivalent integral equation to \eqref{xeq2} is
\begin{equation}\label{xeq6}
\begin{split}
v(x,t) &= S(t)v_0 -i\int_0^tS(t-t')\Big( F(u+v)-F(u)\Big)(x, t') dt'\\
&=: S(t)v_0 + h(x,t),
 \end{split}
\end{equation}
where 
\begin{equation}\label{xeq7}
 F(u+v)-F(u)= \tau (\partial_x)(v^2+2vu) - \frac18\psi(\partial_x)(3u^2v+3uv^2+v^3) -\frac7{48}\psi(\partial_x)(2u_xv_x+v_x^2).
\end{equation}

Let $u\in C([0,T_u];M^{s,p})$ be the solution of IVP \eqref{xeq1}, given by Theorem \ref{mainTh1}. From \eqref{r2.46}, we have that $u$ satisfies
\begin{equation}\label{eqofu}
\sup_{t \in [0, T_u]}\|u(t)\|_{M^{s,p}} \lesssim \|u_0\|_{M^{s,p}}.
\end{equation} 

Let  $a:=2\|v_0\|_{M^{s,p}}$ and define a ball in $ C([0,T];M^{s,p}) $
$$
X_T^a:= \{ v \in C([0,T];M^{s,p}) :   \quad |||v|||  \leq a \}
$$
where, $ |||v|||:=\sup_{t \in [0, T]}\|v(t)\|_{M^{s,p}}$. Now, define an application
$$
\Phi_u(v)(x,t) = S(t)v_0 -i\int_0^tS(t-t')\Big( F(u+v)-F(u)\Big)(x, t') dt'.
$$
We will show that, there exists a time $T>0$ such that the  application $\Phi_u(v)$ is a contraction on the on the ball $X_T^a$.

Let $T\leq T_u$, then using the estimate for the unitary group  $S(t)$ in $M^{s,p}(\R)$, in the light of \eqref{xeq7}, we obtain
\begin{equation}\label{eq-v22}
\begin{split}
\|\Phi_u(v)\|_{M^{s,p}}& \leq \|v_0\|_{M^{s,p}}\\ 
&\quad +T |||\tau (\partial_x)(v^2+2vu) - \frac18\psi(\partial_x)(3u^2v+3uv^2+v^3) 
-\frac7{48}\psi(\partial_x)(2u_xv_x+v_x^2)|||.
\end{split}
\end{equation}

Using inequalities  \eqref{bilin-1}, \eqref{trilin-1}, \eqref{Sharp1} and \eqref{eqofu}, the estimate \eqref{eq-v22} yields
\begin{equation}\label{xeq55}
\begin{split}
\|\Phi_u(v)\|_{M^{s,p}} &\leq \|v_0\|_{M^{s,p}} \\
&\quad +T |||\tau (\partial_x)(v^2+2vu) - \frac18\psi(\partial_x)(3u^2v+3uv^2+v^3) -\frac7{48}\psi(\partial_x)(2u_xv_x+v_x^2)||| \\
&\leq   \frac{a}2+cT|||v||| (|||v|||+ \|u_0\|_{M^{s,p}}) +cT|||v||| (\|u_0\|_{M^{s,p}}^2+\|u_0\|_{M^{s,p}} |||v|||+ |||v|||^2)
\\
&\leq   \frac{a}2+cT[a (a+ \|u_0\|_{M^{s,p}}) (1+a+ \|u_0\|_{M^{s,p}})].
\end{split}
\end{equation}

Now, choosing 
\begin{equation*}
cT[(a+ \|u_0\|_{M^{s,p}}) (1+a+ \|u_0\|_{M^{s,p}})] = \frac12
\end{equation*}
the estimate \eqref{xeq55}  readily yields  $\|\Phi_u(v)\|_{M^{s,p}} \leq a$ thereby showing that $\Phi_u(v)$ is a self map on
 the closed ball $X_T^a$ in $C([0,T];M^{s,p})$.   With a similar argument  one can show that the application  $\Phi_u(v)$ is also a contraction on $X_T^a$. The rest of the proof is standard, so we omit the details.
\end{proof}

The following result will be useful in our argument in proving global well-posedness result when $1< s<2$.

\begin{lemma}\label{lemah1}
Let $1< s<2$,  $u$ be the solution of the IVP \eqref{xeq1} and $v$ be the solution of the IVP \eqref{xeq2}, both with initial data in $M^{s,p}(\R)$. Then, for  $s>\frac32-\frac1p$, the function $h=h(u,v)$  defined in \eqref{xeq6} is in $C([0,t_0], H^2(\R))$ and satisfies
\begin{equation}\label{estim1}
\|u(t_0)\|_{H^2} \lesssim N^{\vartheta} \quad \textrm{and}\quad \|h(t_0)\|_{H^2} \lesssim N^{s-3},
\end{equation} 
where $t_0 \sim N^{-2\vartheta}$, $\vartheta=(2-s)+\frac12-\frac1p$.
\end{lemma}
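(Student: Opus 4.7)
The plan is to establish the two bounds in \eqref{estim1} in succession, working under the standing assumption $p\ge 2$ that is implicit in the range $1\le\frac32-\frac1p<s<2$ relevant for Theorem \ref{Th-global}.

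For the first estimate, the splitting \eqref{31} with $\delta=2$ (legal since $\delta\ge s$ and $p\ge 2$) gives $\|u_0\|_{H^2}\lesssim N^{\vartheta}\|\eta_0\|_{M^{s,p}}$. An $H^2$-version of the contraction argument of Theorem \ref{mainTh1}, obtained by replacing every $\|\cdot\|_{M^{s,p}}$ by $\|\cdot\|_{H^2}$ (Lemmas \ref{Lema1} and \ref{P1} and the estimate \eqref{Sharp1} apply verbatim because $H^2=M^{2,2}$), then produces an $H^2$-solution of \eqref{xeq1} on $[0,t_0]$ with persistence time $t_0\sim\|u_0\|_{H^2}^{-1}(1+\|u_0\|_{H^2})^{-1}\sim N^{-2\vartheta}$, whence $\|u(t_0)\|_{H^2}\lesssim N^{\vartheta}$. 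In parallel, the hypothesis $s>\frac32-\frac1p$ is precisely what \eqref{32} needs at $\rho=1$ to give $\|v_0\|_{H^1}\lesssim N^{1-s}$; an $H^1$-analogue of Theorem \ref{mainTh4} (whose driving norms $\|u_0\|_{H^1}\lesssim N^{1/2-1/p}$ and $\|v_0\|_{H^1}\lesssim N^{1-s}$ are much smaller than $\|u_0\|_{H^2}$ in $N$) then delivers $v\in C([0,t_0];H^1)$ with $\|v(t)\|_{H^1}\lesssim N^{1-s}$.

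For the second estimate, the Duhamel representation \eqref{xeq6} combined with the $H^2$-isometry of $S(t)$ yields
\begin{equation*}
\|h(t_0)\|_{H^2}\;\le\; t_0\sup_{t\in[0,t_0]}\|F(u+v)-F(u)\|_{H^2}.
\end{equation*}
Expanding $F(u+v)-F(u)$ as in \eqref{xeq7} and exploiting the symbol decay $\tau(\xi)\lesssim\langle\xi\rangle^{-1}$, $\psi(\xi)\lesssim\langle\xi\rangle^{-3}$, each of the seven monomials is reduced to an $H^1$ (or lower) norm of a polynomial in $u,v,u_x,v_x$, which is then handled by the one-dimensional Sobolev algebra. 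The pivotal estimate is the bilinear one,
\begin{equation*}
\|\tau(\partial_x)(uv)\|_{H^2}\lesssim\|uv\|_{H^1}\lesssim\|u\|_{H^1}\|v\|_{H^1}\lesssim N^{\frac12-\frac1p}\cdot N^{1-s}=N^{\frac32-s-\frac1p},
\end{equation*}
while the remaining terms (the purely quadratic $v^2$, the three cubic terms, and the derivative pieces $u_xv_x$ and $v_x^2$ that use the two extra derivatives of smoothing from $\psi$) contribute smaller or equal powers of $N$. Multiplying by $t_0\sim N^{-2\vartheta}$ gives
\begin{equation*}
\|h(t_0)\|_{H^2}\lesssim N^{s-\frac72+\frac1p}\;\le\; N^{s-3},
\end{equation*}
the final inequality being equivalent to $p\ge 2$. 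Continuity $h\in C([0,t_0];H^2)$ is immediate from the Bochner integral representation.

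The principal obstacle is the bookkeeping of the seven monomials of $F(u+v)-F(u)$ across two distinct regularity scales ($H^2$ for $u$, $H^1$ for $v$), while ensuring that the smoothing of $\tau$ and $\psi$ absorbs the derivatives falling on $u_x,v_x$. The tightest point is $\tau(\partial_x)(uv)$: the single derivative of smoothing provided by $\tau$ couples exactly with the $H^1$ norms of $u$ and $v$ to produce the critical exponent $\frac32-s-\frac1p$, which is what pins the whole argument down to $p=2$ as its endpoint.
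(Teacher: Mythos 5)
Your overall strategy is the same as the paper's (the paper simply defers to Lemma 2.6 of \cite{CP}, adapted via \eqref{est-4.1} and \eqref{32} to the inputs $\|u\|_{H^1}\lesssim N^{\frac12-\frac1p}$, $\|v\|_{H^1}\lesssim N^{1-s}$): split off the Duhamel term, use unitarity of $S(t)$ in $H^2$, the symbol decay $|\tau(\xi)|\lesssim\langle\xi\rangle^{-1}$, $|\psi(\xi)|\lesssim\langle\xi\rangle^{-3}$, and multiply by $t_0\sim N^{-2\vartheta}$. Your first estimate $\|u(t_0)\|_{H^2}\lesssim N^{\vartheta}$ and the persistence of the $H^1$ bounds on $[0,t_0]$ (the $H^1$/$H^2$ lifespans dominate $t_0$ because $2\vartheta\geq 1-\frac2p$) are fine, and the hypothesis $s>\frac32-\frac1p$ is indeed what makes $\rho=1$ admissible in \eqref{32}.

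The gap is in the bookkeeping of the seven monomials in \eqref{xeq7}. With the norms you actually allow yourself ($\|u\|_{H^1}\lesssim N^{\frac12-\frac1p}$, $\|v\|_{H^1}\lesssim N^{1-s}$), the cubic piece $\psi(\partial_x)(u^2v)$ satisfies only $\|\psi(\partial_x)(u^2v)\|_{H^2}\lesssim\|u^2v\|_{H^{-1}}\lesssim\|u\|_{H^1}^2\|v\|_{H^1}\sim N^{2-s-\frac2p}$, which is \emph{larger} than your ``pivotal'' $N^{\frac32-s-\frac1p}$ exactly when $p>2$; after multiplication by $t_0\sim N^{2s-5+\frac2p}$ it contributes $N^{s-3}$, i.e.\ it saturates \eqref{estim1} instead of being dominated by $\tau(\partial_x)(uv)$. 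Hence your intermediate claim $\|h(t_0)\|_{H^2}\lesssim N^{s-\frac72+\frac1p}$ is unjustified (false for $p>2$), and the term that actually determines the exponent $s-3$ is never estimated; the lemma still follows, but only after bounding $\psi(\partial_x)(u^2v)$ separately (or improving it via $\|u\|_{L^\infty}\lesssim\|u\|_{M^{s,p}}\lesssim 1$, which is available from Theorem \ref{mainTh1} since $t_0\ll T_u$). A second, smaller issue: ``handled by the one-dimensional Sobolev algebra'' does not cover $\psi(\partial_x)(u_xv_x)$, since $H^{-1}$ is not an algebra and $u_x,v_x$ are only in $L^2$ at the regularities you carry; you need $\|u_xv_x\|_{H^{-1}}\lesssim\|u_xv_x\|_{L^1}\leq\|u_x\|_{L^2}\|v_x\|_{L^2}$ (the embedding $L^1(\R)\hookrightarrow H^{-1}(\R)$), because the alternative $\|u_x\|_{L^\infty}\|v_x\|_{L^2}\lesssim\|u\|_{H^2}\|v\|_{H^1}\sim N^{\vartheta+1-s}$ yields, after multiplying by $t_0$, the power $N^{\frac1p-\frac32}$, which exceeds $N^{s-3}$ whenever $s<\frac32+\frac1p$ — a range permitted by the hypotheses. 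Both points are repairable, but as written the dominant term is misidentified and the derivative term's reduction is not the one that closes the estimate.
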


\begin{proof}
The proof of this result follows from the proof of Lemma 2.6 in \cite{CP} by noticing from \eqref{est-4.1} and \eqref{32} that, in this case, one has
$$\|u\|_{H^1}\lesssim N^{\frac12-\frac1p}, \qquad {\mathrm{and}}\qquad \|v\|_{H^1}\lesssim N^{1-s},$$
since $\|u_0\|_{H^\theta}\lesssim \|u_0\|_{M^{\theta, p}}N^{\frac12-\frac1p} \leq \|\eta_0\|_{M^{s, p}}N^{\frac12-\frac1p}$, for any $\theta\leq s$ and $p\geq 2$. So, we omit the details.
\end{proof}

The following result will play a crucial role while dealing with higher Sobolev regularity data.
 \begin{lemma}\label{estimnormgwp}
  Let $p\geq 1$, then there is a constant $C = C_s$ such that the following estimates hold
\begin{equation}\label{xavbt1}
\|\tau(\partial_x) (\eta^2)\|_{M^{s_2, p}} \le C\|\eta\|_{M^{s_1, p}}\|\eta\|_{M^{s_2, p}}, \quad {\mathrm {for}}\;\; s_2 \geq s_1>0,
\end{equation}

\begin{equation}\label{xavbt2}
\|\psi(\partial_x) (\eta^3)\|_{M^{s_2, p}} \le C\|\eta\|_{M^{s_1, p}}^2 \|\eta\|_{M^{s_2, p}}, \quad {\mathrm {for}}\;\; s_2 \geq s_1\geq 1
\end{equation}
and 
\begin{equation}\label{xavbt3}
\|\psi(\partial_x) (\eta_x)^2 \|_{M^{s_2, p}} \le C\|\eta\|_{M^{s_1, p}}\|\eta\|_{M^{s_2, p}}, \quad {\mathrm {for}}\;\; s_2 \geq s_1>1.
\end{equation}
 \end{lemma}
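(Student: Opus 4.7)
The strategy is to adapt the duality and dyadic-decomposition arguments of Lemmas \ref{BT1} and \ref{P1} to the asymmetric setting, assigning in each frequency interaction the higher-regularity weight $\langle\cdot\rangle^{s_2}$ to the factor carrying the largest frequency and the lower weight $\langle\cdot\rangle^{s_1}$ to a factor of smaller frequency. Since $s_2\geq s_1$, this is the only natural pairing, and it forces the summability conditions on the resulting dyadic sums to depend on $s_1$ rather than on $s_2$, matching exactly the hypotheses.

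For \eqref{xavbt1}, I would first use the pointwise bound $|\tau(\xi)|\lesssim\omega(\xi)$ to reduce matters to estimating $\|\omega(\partial_x)(\eta\cdot\eta)\|_{M^{s_2,p}}$. By duality in $M^{s_2,p}$, as in the proof of Lemma \ref{BT1}, this is equivalent to showing
\begin{equation*}
\int_{\xi_1+\xi_2+\xi=0}\langle\xi\rangle^{s_2-1}\widehat{v}(\xi)\widehat{u}(\xi_1)\widehat{u}(\xi_2)\lesssim \|u\|_{M^{s_1,p}}\|u\|_{M^{s_2,p}}\|v\|_{M^{0,p'}}.
\end{equation*}
I would then split into exactly the dyadic cases used in Lemma \ref{BT1} (Case I: $N_2\ll N_1$, forcing $N\sim N_1$; Case II: $N_1\sim N_2$, with sub-cases $N\sim N_1$ and $N\ll N_1$), but now extracting from the $N_1$-piece a weight $N_1^{s_2}$ and from the $N_2$-piece a weight $N_2^{s_1}$. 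The Cauchy--Schwarz/Bernstein/H\"older chain then produces the same dyadic sums as in Lemma \ref{BT1} with $s$ replaced by $s_1$, and the trichotomy $s_1>1/2$, $s_1=1/2$, $0<s_1<1/2$ is handled exactly as Sub-cases Ia--Ic and IIb~i)--iii) there.

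For \eqref{xavbt2}, I would dualize to
\begin{equation*}
\int_{\xi_1+\xi_2+\xi_3+\xi=0}\langle\xi\rangle^{s_2-3}\widehat{v}(\xi)\widehat{u}(\xi_1)\widehat{u}(\xi_2)\widehat{u}(\xi_3)\lesssim \|u\|_{M^{s_1,p}}^2\|u\|_{M^{s_2,p}}\|v\|_{M^{0,p'}},
\end{equation*}
and run the four-case analysis of Lemma \ref{P1} (trivial, non-resonant, semi-resonant, resonant), assigning the weight $\langle N_{\max}\rangle^{s_2}$ to the highest-frequency factor and $\langle N_{\rm med}\rangle^{s_1}$, $\langle N_{\min}\rangle^{s_1}$ to the other two. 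Since $s_1\geq 1$, the convergence of the resulting sums is identical to the one established in Lemma \ref{P1}.

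For \eqref{xavbt3}, I would exploit $|\psi(\xi)|\lesssim \omega(\xi)/\langle\xi\rangle$ to obtain
\begin{equation*}
\|\psi(\partial_x)(\eta_x)^2\|_{M^{s_2,p}}\lesssim \|\omega(\partial_x)(\eta_x)^2\|_{M^{s_2-1,p}},
\end{equation*}
then apply \eqref{xavbt1} with indices $(s_1-1,s_2-1)$, which are admissible since $s_1-1>0$ and $s_2-1\geq s_1-1$, and conclude using the elementary bound $\|\eta_x\|_{M^{\sigma,p}}\lesssim \|\eta\|_{M^{\sigma+1,p}}$, which is immediate from $\Pi_n\eta_x=\partial_x\Pi_n\eta$ and the frequency localization of $\Pi_n$. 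The main obstacle will be the careful bookkeeping of dyadic sums in the small-$s_1$ regime of \eqref{xavbt1}; however, all such computations are covered verbatim by the sub-case analysis already carried out in Lemma \ref{BT1}, so no genuinely new difficulty arises.
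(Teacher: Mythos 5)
Your proposal is correct, but it takes a more laborious route than the paper for \eqref{xavbt1} and \eqref{xavbt2}. You propose to re-run the entire dyadic case analysis of Lemmas \ref{BT1} and \ref{P1} with asymmetric weights ($N_{\max}^{s_2}$ on the high-frequency factor, $N^{s_1}$ on the others). The paper instead avoids any new case analysis: setting $s=s_2-s_1$ and using only that the output frequency satisfies $|\xi|\lesssim|\xi_{\max}|$, it bounds $\langle\xi\rangle^{s}\lesssim\langle\xi_{1}\rangle^{s}$ (with $\xi_1$ the largest frequency), absorbs the excess weight into the factor by setting $\widehat{u_1}(\xi_1)=\langle\xi_1\rangle^{s}\widehat{\eta}(\xi_1)$ --- whose $M^{s_1,p}$-norm is exactly $\|\eta\|_{M^{s_2,p}}$ --- and then quotes the already-established symmetric estimates \eqref{bt3} and \eqref{ab1} at regularity level $s_1$; this is why the hypotheses $s_1>0$ and $s_1\geq 1$ appear. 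Your dyadic re-derivation would also work (the sums over the two lower-frequency factors are governed by $s_1$, exactly as you say; note only that in the sub-case $N\ll N_1\sim N_2$ the sum over the output frequency $N$ is governed by $s_2$ rather than $s_1$, but since $s_2\geq s_1$ all three regimes are covered by the same computations as in \eqref{bt5}), so this is a matter of economy rather than correctness: the paper's weight-transfer reduction buys a few lines, while your argument is self-contained at the dyadic level. Your treatment of \eqref{xavbt3} --- $\psi\lesssim\omega/\langle\cdot\rangle$, then \eqref{xavbt1} with indices $(s_1-1,s_2-1)$ and $\|\eta_x\|_{M^{\sigma,p}}\lesssim\|\eta\|_{M^{\sigma+1,p}}$ --- is exactly the paper's.
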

 \begin{proof}
 Let $s:=s_2-s_1$. First, notice that is suffices prove \eqref{xavbt1}  considering $\omega(\partial_x)$ instead of $\tau(\partial_x)$.
By \eqref{bt3}, we need to prove
 \begin{equation}\label{xavbt4}
\int_{\xi_1+\xi_2+\xi=0}\dfrac{\langle \xi \rangle^{s_2}}{\langle \xi \rangle}\widehat{v}(\xi)\widehat{\eta}(\xi_1) \widehat{\eta}(\xi_2)  
\lesssim \|\eta\|_{M^{s_1,p}}\|\eta\|_{M^{s_2, p}}\|v\|_{M^{0,p'}}.
\end{equation}
Without loss of generality we can suppose that $|\xi_2|\leq |\xi_1|$, so that $|\xi|\leq 2 |\xi_1|$. Using \eqref{bt3} with $\widehat{u_1}(\xi_1)=\langle \xi_1 \rangle^{s}\widehat{\eta}(\xi_1) $ and $\widehat{u_2}(\xi_2)=\widehat{\eta}(\xi_2)$, we have
 \begin{equation*}
\begin{split}
\int_{\xi_1+\xi_2+\xi=0}\dfrac{\langle \xi \rangle^{s+s_1}}{\langle \xi \rangle}\widehat{v}(\xi)\widehat{\eta}(\xi_1) \widehat{\eta}(\xi_2) 
&\lesssim \int_{\xi_1+\xi_2+\xi=0}\dfrac{\langle \xi \rangle^{s_1}}{\langle \xi \rangle}\widehat{v}(\xi) \left[\langle \xi_1 \rangle^{s}\widehat{\eta}(\xi_1) \right]\widehat{\eta}(\xi_2)\\
&\lesssim \|\eta\|_{M^{s_1,p}}\|\eta\|_{M^{s_2, p}}\|v\|_{M^{0,p'}},
\end{split}
\end{equation*}
as required in \eqref{xavbt4}.

In order to prove \eqref{xavbt3}, we use that $\psi(\xi)\lesssim \frac{\omega(\xi)}{\langle \xi \rangle}$ and \eqref{xavbt1}, where $s_2-1 \geq s_1-1>0$. With these considerations
\begin{equation*}
\|\psi(\partial_x) (\eta_x)^2 \|_{M^{s_2, p}} \lesssim \|\omega(\partial_x) (\eta_x)^2 \|_{M^{{s_2-1}, p}}  \lesssim \|\eta_x\|_{M^{{s_1-1}, p}}\|\eta_x\|_{M^{{s_2-1}, p}} \lesssim \|\eta\|_{M^{{s_1}, p}}\|\eta\|_{M^{{s_2}, p}},
\end{equation*}
as announced in \eqref{xavbt3}.

Now, we move to prove \eqref{xavbt2}. Without loss of generality, one can suppose that 
$$|\xi_1|=\max\{|\xi_1|,|\xi_2|, |\xi_3|\},$$
so that $|\xi|\leq 3 |\xi_1|$. In this way
\begin{equation}\label{xavbt7}
\begin{split} 
\int_{\xi_1+\xi_2+\xi_3+\xi=0}\dfrac{\langle \xi \rangle^{s+s_1}}{\langle \xi \rangle^3}\widehat{v}(\xi)\widehat{\eta}(\xi_1) \widehat{\eta}(\xi_2) \widehat{\eta}(\xi_3) 
&\lesssim \int_{\xi_1+\xi_2+\xi_3+\xi=0}\dfrac{\langle \xi \rangle^{s_1}}{\langle \xi \rangle^3}\widehat{v}(\xi) \left[ \langle \xi_1 \rangle^{s}\widehat{\eta}(\xi_1) \right] \widehat{\eta}(\xi_2) \widehat{\eta}(\xi_3)\\
&\lesssim \|\eta\|_{M^{s_1,p}}^2\|\eta\|_{M^{s_2, p}}\|v\|_{M^{0,p'}}.
\end{split}
\end{equation}
where in the last inequality we  used \eqref{ab1}. From \eqref{xavbt7} one easily obtains the required estimate \eqref{xavbt2}.
\end{proof}

Now we are in position to supply the proof of the global well-posedness result.

\begin{proof}[Proof of Theorem \ref{Th-global}]  Let $\eta_0\in M^{s,p}$, with $p\geq 2$ and $s>1$. As discussed earlier, our idea is to extend the local solution given by Theorem \ref{mainTh1} to the time interval $[0, T]$ for any $T\gg 1$.  As discussed above, we split the initial data $\eta_0=u_0+v_0$ so that $u_0$ and $v_0$ satisfy the growth conditions \eqref{31} and \eqref{32} respectively.

Let $u$ be the solution  to the IVP \eqref{xeq1} given by Theorem \ref{mainTh1} with data  $u_0$ and $v$ be the solution to the  IVP \eqref{xeq2} given by Theorem \ref{mainTh4} with data $v_0$. Note  that, the sum $\eta=u+v$ solves the original IVP \eqref{eq1.9} in the common time interval of existence of $u$ and $v$.

 We divide the proof in two different cases, viz.,  $1< s< 2$ and $s\geq 2$.\\

\noindent
{\bf Case 1. $\boxed{1<s<2}$:} Observe that from \eqref{energy} and  \eqref{31},  we have
\begin{equation}\label{xeq4}
E(u(t))=E(u_0) \sim\|u_0\|_{H^2}^2 \lesssim N^{2\vartheta},
\end{equation}
where $\vartheta=(2-s)+\frac12-\frac1p>0$.
The  local existence time in $M^{2,p}$ given by Theorem \ref{mainTh1} can be estimated as
\begin{equation}\label{xeq5}
\begin{split}
T_u&=\dfrac{c_s}{\|u_0\|_{M^{2,p}}(1+\|u_0\|_{M^{2,p}})}\\
& \gtrsim \dfrac{c_s}{\|u_0\|_{H^2}(1+\|u_0\|_{H^2})}\\
 &\geq \dfrac{c_s}{ N^{\vartheta}(1+ N^{\vartheta})} \\
&\geq \dfrac{c_s}{ N^{2\vartheta}}=:t_0.
 \end{split}
\end{equation}

Notice that, it is here the restriction $p\geq 2$ is necessary. Now, for $p\geq 2$, we have  $$(\|v_0\|_{M^{s,p}}+\|u_0\|_{M^{s,p}})(1+\|v_0\|_{M^{s,p}}+\|u_0\|_{M^{s,p}})\lesssim \|\eta_0\|_{H^s}(1+\|\eta_0\|_{H^s})=:C_s.$$ 
Therefore,
\begin{equation}\label{uxeq5}
\begin{split}
T_v &=\dfrac{c_s}{(\|v_0\|_{M^{s,p}}+\|u_0\|_{M^{s,p}})(1+\|v_0\|_{M^{s,p}}+\|u_0\|_{M^{s,p}})} \\
&\gtrsim \dfrac{c_s}{\|\eta_0\|_{H^s}(1+\|\eta_0\|_{H^s})} \\
&\geq \dfrac{c_s}{C_s} \geq t_0.
\end{split}
\end{equation}

In the light of the estimates \eqref{xeq5} and \eqref{uxeq5}, one can guarantee that the both  solutions $u$ and $v$ are  defined in the same time interval $[0,t_0]$.

From \eqref{xeq4} and \eqref{xeq5}, one has
\begin{equation}\label{0xeq4}
 t_0 \lesssim \dfrac{1}{E(u_0)}.
\end{equation}

Observe that, from \eqref{xeq6} the local solution $v \in M^{s,p}$ is given by 
\begin{equation}\label{xeq6-1}
v(x,t) = S(t)v_0 + h(x,t).
\end{equation}
Since  $[0, t_0]$ with $t_0 \sim  N^{-2\vartheta}$ is the common existence interval of $u$ and $v$, in view of \eqref{xeq6-1}, we can write  the solution $\eta$ in the form
\begin{equation}\label{00xeq9}
\eta(t)=u(t)+ v(t) = u(t)+ S(t)v_0 +h(t), \quad t\in [0,t_0].
\end{equation}

From \eqref{00xeq9}, at the time $t=t_0$, one has
\begin{equation}\label{xeq9}
\eta(t_0)=u(t_0)+ S(t_0)v_0 +h(t_0)=:u_1+v_1,
\end{equation}
where 
\begin{equation}\label{decomp1}
u_1=u(t_0) +h(t_0) \quad \textrm{and} \quad v_1 =S(t_0)v_0.
\end{equation} 

Now, at the time $t=t_0$  we consider the new initial data $u_1$, $v_1$ and evolve them according to the IVPs \eqref{xeq1} and \eqref{xeq2} respectively, and continue iterating this process. In each step of iteration we consider the decomposition of the initial data as in \eqref{xeq9}-\eqref{decomp1}. Continuing in this way, we have $v_1, \dots, v_k = S(k t_0)v_0$ with $\|v_k\|_{M^{s,p}} = \|v_0\|_{M^{s,p}}$. If we can guarantee that $u_1, \dots, u_k$ also have the same growth properties as that of $u_0$ then it can be  ensured that the common existence time interval $[0,t_0]$ remains the same in each iteration and one can glue them to cover the whole time interval $[0,T]$. 

In what follows, we show that the growth each $u_k$ has the same growth property as that of $u_0$  using induction argument. To give an idea we will prove only the case $k = 1$. The general case follows with  a similar argument. In the light of \eqref{0xeq4}, the main idea in the proof is the use of the energy conservation law \eqref{energy}. Notice that 
\begin{equation}\label{xeq10}
\begin{split}
E(u_1)&=E(u(t_0) +h(t_0))\\
&=E(u(t_0))+ \big[E(u(t_0) +h(t_0))-E(u(t_0))\big]\\
 &=:E(u(t_0))+\mathcal{X}.
\end{split}
\end{equation}

We estimate the second term in \eqref{xeq10} as follows
\begin{equation}\label{xeq11}
\begin{split}
\mathcal{X}&=2 \int_{\R}u(t_0)h(t_0) dx+\int_{\R}h(t_0)^2dx+2\gamma_1 \int_{\R}u_x(t_0)h_x(t_0) dx\\
 & \quad+\gamma_1\int_{\R}h_x(t_0)^2dx+2 \delta_1\int_{\R}u_{xx}(t_0)h_{xx}(t_0) dx+\delta_1\int_{\R}h_{xx}(t_0)^2dx\\
&\leq  2\|u(t_0)\|_{L^2}\|h(t_0)\|_{L^2}+\|h(t_0)\|_{L^2}^2+ \gamma_1 (2 \|u_x(t_0)\|_{L^2}\|h_x(t_0)\|_{L^2}+\|h_x(t_0)\|_{L^2}^2)\\
&\quad+\delta_1(2\|u_{xx}(t_0)\|_{L^2}\|h_{xx}(t_0)\|_{L^2}+\|h_{xx}(t_0)\|_{L^2}^2).
\end{split}
\end{equation}

Now, using estimate \eqref{estim1} from Lemma \ref{lemah1}, one obtains from \eqref{xeq11} that
\begin{equation}\label{xeq12}
\begin{split}
\mathcal{X} &\lesssim   N^{\frac12-\frac1p}N^{s-3}+N^{2(s-3)}+ \gamma_1 (N^{\frac12-\frac1p}N^{s-3}+N^{2(s-3)})+\delta_1(N^{2-s}N^{\frac12-\frac1p}N^{s-3}+ N^{2(s-3)})\\
& \lesssim  N^{\frac12-\frac1p}N^{s-3}\\
& \lesssim N^{-\frac12}.
\end{split}
\end{equation}
Inserting \eqref{xeq12} in   \eqref{xeq10}, we get
\begin{equation}\label{xeq13}
\begin{split}
E(u_1) & \leq E(u(t_0))+cN^{-\frac12}.
\end{split}
\end{equation}

Considering fixed time interval of length $t_0$ in each step of iteration, to cover the whole time interval $[0, T]$ for any given $T\gg 1$, one needs to iterate the process $\frac{T}{t_0}$ times.  Recalling \eqref{xeq5}, one has
\begin{equation}\label{T-t}
\dfrac{T }{t_0}\sim TN^{2\vartheta}.
\end{equation}
Thus, from \eqref{T-t} and \eqref{xeq13} it is clear that $E(u_1)$ posses the desired growth condition to cover $[0, T]$, provided
\begin{equation}\label{N-c}
TN^{2\vartheta} N^{-\frac12}  \lesssim N^{2\vartheta}.
\end{equation}
Note that the estimate in \eqref{N-c} remains valid for $1< s < 2$ if one  chooses
$N = N(T) = T^{2}$ large enough.   This completes  the proof of Theorem \ref{Th-global} for $1\leq \frac32-\frac1p<s<2$ and $p\geq 2$.\\

 \noindent
 {\bf Case 2. $\boxed{s\geq 2}$:} To prove the global well-posedness result in this case, we will use the previous case to derive a uniform {\em a priori} estimate. Let $p \geq 2$,
  $s_0\in (1,2)$ be fixed such that $s_0> \frac32-\frac1p$,  $\eta_0\in M^{s, p}$ with  $s \geq 2$ and $T\gg 1$ arbitrary. Using immersion, we have $\eta_0\in M^{s_0, p}$, and by {\bf Case 1}, $\eta \in C( [0, T],  M^{s_0, p})$.

From \eqref{Norm2} in Theorem A, we have $\|\eta(t)-S(t)\eta_0\|_{H^2}\lesssim (1+t)^{2-s}$. Therefore, using immersion, for all $t\in \R$, we have
\begin{equation}\label{estunifeta}
\begin{split}
\|\eta(t)\|_{M^{s_0, p}} &\leq \|\eta(t)-S(t)\eta_0\|_{M^{s_0, p}}+\|S(t)\eta_0\|_{M^{s_0, p}}\\
&\lesssim \|\eta(t)-S(t)\eta_0\|_{H^{s_0}}+\|\eta_0\|_{M^{s_0, p}}\\
&\lesssim (1+t)^{2-s_0}+\|\eta_0\|_{M^{s_0, p}}.
\end{split}
\end{equation}
Considering the local  existence time in \eqref{r2.45}, we can choose the time of existence $\delta$ of the solution $\eta$ as
$$
\delta=\dfrac{c}{\|\eta_0\|_{M^{s_0, p}}+\|\eta_0\|_{M^{s_0, p}}^2+T^{2(2-s_0)}}.
$$

For a fixed $0<t_0<T$, the Duhamel's formula with initial data $\eta(t_0)$ gives
\begin{equation}\label{D-M}
\eta(t_0+\tau)=S(\tau)\eta(t_0)-i\int_{0}^{\tau}S(t-t')F(\eta)(t_0+t') dt',\quad 0\leq\tau\leq \delta,
\end{equation}
where $F(\eta)= \tau (\partial_x)\eta^2 - \frac18\psi(\partial_x)\eta^3  -\frac7{48}\psi(\partial_x)\eta_x^2\,$.

Applying the $\|\cdot \|_{M^{s, p}}$ norm, using Lemma \ref{estimnormgwp} and the estimate \eqref{estunifeta},  for $t''=t_0+t'$ with $t'\in (0, \delta')$, we obtain from \eqref{D-M} that
\begin{equation}\label{Duhamelestim}
\begin{split}
\|\eta(t_0+\tau)\|_{M^{s, p}}&\leq \|\eta(t_0)\|_{M^{s, p}}+c\delta'\left( \|\eta(t'')\|_{M^{s_0, p}}+\|\eta(t'')\|_{M^{s_0, p}}^2 \right)\|\eta(t'')\|_{M^{s, p}}\\
&\leq \|\eta(t_0)\|_{M^{s, p}}+c\delta'\left(\|\eta_0\|_{M^{s_0, p}}+\|\eta_0\|_{M^{s_0, p}}^2+T^{2(2-s_0)}\right)\|\eta(t'')\|_{M^{s, p}}.
\end{split}
\end{equation}
In the same way as in the proof of the local well-posedness in $t_0=0$, we obtain the local existence time 
\begin{equation}\label{timedeltagwp}
\delta'=\dfrac{c}{\|\eta_0\|_{M^{s_0, p}}+\|\eta_0\|_{M^{s_0, p}}^2+T^{2(2-s_0)}}=\delta. 
\end{equation} 
Thus, considering the initial data at time $t=t_0$, we have the local well-posedness result in $M^{s, p}$ in the interval  $[t_0, t_0+\delta]$ with existence time length the same $\delta$ as was initially considering initial data at $t=0$. Therefore, we can iterate this process  finite number of times with a uniform existence time $\delta$ at each step. In fact, we will  iterate this to cover the  whole interval $[0,T]$ for any finite $T\gg 1$ given.  Note that one needs to iterate $\frac{T}\delta$ times to cover the whole interval $[0, T]$. Hence, we need to ensure that the constant accumulating in each step of iteration does not blow-up.

Considering $t_0=0$, choosing a convenient constant $c$ in \eqref{timedeltagwp}, and using a continuity argument in \eqref{Duhamelestim},  for any $t\in [0, \delta]$ one has
$$
\|\eta(t)\|_{M^{s, p}} \leq 2 \|\eta_0\|_{M^{s, p}}. 
$$

Similarly, if $t_0=\delta$, we have  for any $t\in [\delta, 2\delta]$
$$
\|\eta(t)\|_{M^{s, p}} \leq 2 \|\eta(\delta)\|_{M^{s, p}}\leq 2^2\|\eta_0\|_{M^{s, p}}.
$$
Proceeding in this way, after $\frac{T}{\delta}$ steps, for any $t\in [0, T]$, one has
\begin{equation*}
\begin{split}
\|\eta(t)\|_{M^{s, p}} &\leq 2^{\frac{T}{\delta}}\|\eta_0\|_{M^{s, p}}\\
&\leq C^{T \left(\|\eta_0\|_{M^{s_0, p}}+\|\eta_0\|_{M^{s_0, p}}^2+T^{2(2-s_0)}\right)}\|\eta_0\|_{M^{s, p}}.,
\end{split}
\end{equation*}
thereby proving a global well-posedness result as required.
\end{proof}


\setcounter{equation}{0}
\section{Local well-posedness in $H^{s,p}$ spaces}\label{sec-5}
In this section we will derive some multiplier estimates and prove the local well-posedness result for the IVP \eqref{eq1.9} with initial data in the $H^{s,p}$ spaces stated in Theorem \ref{mainTh2}. We start by introducing Fourier multiplier operators and their properties. 

Let $m$ be a measurable function defined on $\R^n$. Let $m(D)$ be an operator defined via the Fourier transform
$$
\widehat{m(D)f}(\xi):=m(\xi)\widehat{f}(\xi), \qquad \forall \; f\in \mathcal{S}(\R^n).
$$
The symbol $m(\xi)$ of the operator $m(D)$ is called a Fourier multiplier on $L^p(\R^n)$, $1\leq p\leq \infty$, if $m(D)$ can be extended as a bounded operator on $L^p(\R^n)$, i.e.,
$$m(D):L^p(\R^n)\to L^p(\R^n)$$
is bounded. We use
$\mathcal{M}_p(\R^n)$ to denote set of all Fourier multipliers on $L^p(\R^n)$. 

We record the following result from \cite{MW-16} that helps to verify if a bounded function is a  Fourier multiplier on $L^p(\R^n)$.

\begin{theorem}\label{teor2.1}
Let $k \in \N$, $k>\frac{n}{2}+1$, $m \in C^k(\R^n \setminus \{0\})$, and there exists $\delta\geq 0$ such that for any $|\alpha|\geq k$,
$$
|D^{\alpha} m(\xi)| \leq \dfrac{C_{\alpha}}{ \langle \xi \rangle^{\delta +\alpha}}, \qquad \forall\; \xi \in \R^n. 
$$
If $\delta=0$, then $m \in \mathcal{M}_p(\R^n),\; 1<p<\infty$.\\
If $\delta>0$, then $m \in \mathcal{M}_1(\R^n)$.
\end{theorem}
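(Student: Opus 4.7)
This is a Mikhlin--Hörmander type multiplier theorem, so my plan is to translate the symbol hypotheses into kernel estimates for $K:=\mathcal{F}^{-1}m$ and treat the two cases $\delta=0$ and $\delta>0$ separately: in the first I would invoke Calderón--Zygmund theory, and in the second I would show $K\in L^1(\R^n)$ and apply Young's convolution inequality. Throughout I read the hypothesis in the standard Mikhlin form $|D^\alpha m(\xi)|\le C_\alpha\langle\xi\rangle^{-\delta-|\alpha|}$ for multi-indices $\alpha$ of order up to (a sufficiently large power of) $k$, with $k>\tfrac{n}{2}+1$.

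\noindent\textbf{Case $\delta=0$.} I would use a Littlewood--Paley decomposition. Fix a smooth radial $\phi$ supported in $\{1/2\le|\xi|\le 2\}$ with $\sum_{j\in\Z}\phi(2^{-j}\xi)=1$ on $\xi\neq 0$, and set $m_j(\xi):=m(\xi)\phi(2^{-j}\xi)$ and $K_j:=\mathcal{F}^{-1}m_j$. Using the identity $x^\alpha K_j(x)=(-i)^{|\alpha|}\mathcal{F}^{-1}(D^\alpha m_j)(x)$ together with Leibniz and scaling, the symbol bounds translate into the pointwise estimates
\[
|K_j(x)|\le C_k\,2^{jn}(1+2^j|x|)^{-k},\qquad|\nabla K_j(x)|\le C_k\,2^{j(n+1)}(1+2^j|x|)^{-k}.
\]
Summing in $j$ and using $k>\tfrac{n}{2}+1$ produces Hörmander's integral cancellation condition
\[
\sup_{y\neq 0}\int_{|x|>2|y|}|K(x-y)-K(x)|\,dx<\infty,
\]
while the $\alpha=0$ hypothesis gives $m\in L^\infty$, hence $m(D)\colon L^2\to L^2$ is bounded by Plancherel. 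Calderón--Zygmund theory then yields weak type $(1,1)$, and Marcinkiewicz interpolation with $L^2$ combined with duality upgrades this to $L^p$-boundedness for every $1<p<\infty$.

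\noindent\textbf{Case $\delta>0$.} It suffices to show $K\in L^1(\R^n)$, since then Young's inequality gives $\|m(D)f\|_{L^1}=\|K*f\|_{L^1}\le\|K\|_{L^1}\|f\|_{L^1}$, i.e.\ $m\in\mathcal{M}_1$. Splitting $\R^n=\{|x|\le 1\}\cup\{|x|>1\}$, on the outer region I pick a multi-index of order $k$ and use $x^\alpha K(x)=(-i)^{k}\mathcal{F}^{-1}(D^\alpha m)(x)$ together with $|D^\alpha m(\xi)|\lesssim\langle\xi\rangle^{-\delta-k}$, which is integrable in $\xi$ (using the extra $\delta$-decay at infinity), to obtain $|x|^k|K(x)|\le C$ and hence $|K(x)|\lesssim|x|^{-k}$, integrable on $\{|x|>1\}$ because $k>n$. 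On the inner region, the combination of the $|\alpha|=0$ and $|\alpha|=k$ bounds places $m$ itself in $L^1(\R^n)$, whence $K\in L^\infty$ and so integrable on the unit ball. Together these yield $K\in L^1(\R^n)$.

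\noindent\textbf{Main obstacle.} The delicate step is the $\delta=0$ case: transferring pointwise symbol bounds into Hörmander's cancellation condition requires careful bookkeeping of the dyadic scaling in $j$, and the threshold $k>\tfrac{n}{2}+1$ is exactly what makes the associated $j$-series summable when estimating differences $K(x-y)-K(x)$ via the mean value theorem applied to $\nabla K_j$. The $\delta>0$ case is more routine once one observes that the extra decay places $K$ itself in $L^1$, making the singular integral machinery unnecessary; here the only subtlety is to iterate the integration-by-parts identity enough times to cope with all $n\ge 1$ and all $\delta>0$, which is the reason the hypothesis supplies control of derivatives of every sufficiently high order.
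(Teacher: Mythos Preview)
The paper does not prove this statement at all: it is quoted verbatim from \cite{MW-16} with the remark ``We record the following result from \cite{MW-16}\ldots'', and no argument is supplied. Consequently there is no ``paper's own proof'' to compare your proposal against.

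Your outline is the standard Mikhlin--H\"ormander route and is broadly correct. A couple of places deserve tightening. In the $\delta>0$ case, your claim that ``the $|\alpha|=0$ and $|\alpha|=k$ bounds place $m$ itself in $L^1(\R^n)$'' is only true when $\delta>n$; for small $\delta>0$ one must instead argue via an $H^s$-type bound: the hypotheses give $D^\alpha m\in L^2$ whenever $\delta+|\alpha|>n/2$, and since $k>\tfrac{n}{2}+1$ this holds for some $|\alpha|\le k$ and also for $|\alpha|=0$ once interpolated, yielding $m\in H^s$ with $s>n/2$ and hence $\mathcal{F}^{-1}m\in L^1$ by Bernstein. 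Similarly, your outer-region estimate $|x|^k|K(x)|\le C$ needs $D^\alpha m\in L^1$ for $|\alpha|=k$, i.e.\ $\delta+k>n$, which again is not automatic for arbitrary $n$ and small $\delta$; the $H^s$ argument sidesteps this. For the one-dimensional setting actually used in the paper ($n=1$, $k\ge 2$) all of these thresholds are comfortably met and your argument goes through as written.
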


The following Sobolev embedding theorem will also be useful in our analysis.
\begin{theorem}\label{teor2.2} {\bf (a)} Let $0<sp<n$. Then
\begin{equation}\label{eq7}
\|f\|_{L^q} \lesssim\|f\|_{H^{s,p}}
\end{equation}
holds for $q\in [p,\frac{np}{n-sp}]$ if $1<p< \infty$, and  $q\in [p,\frac{np}{n-sp})$ if $p=1$.

{\bf (b)} Let $sp=n$, $1\leq p<\infty$. Then \eqref{eq7} holds for all $q\in [p, \infty)$.
\end{theorem}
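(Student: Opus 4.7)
The plan is to reduce the embedding to a mapping property of the inverse Bessel potential $\Lambda^{-s}$ and then appeal to Hardy--Littlewood--Sobolev and Young's convolution inequalities. Set $g:=\Lambda^s f$, so by definition $\|g\|_{L^p}=\|f\|_{H^{s,p}}$ and $f=\Lambda^{-s}g=G_s\ast g$, where $G_s$ is the Bessel kernel with Fourier transform $\langle\xi\rangle^{-s}$. The two facts about $G_s$ that I will use are classical: for $0<s<n$, $G_s$ is positive and satisfies $G_s(x)\sim c_{n,s}|x|^{s-n}$ as $|x|\to 0$, while $G_s$ decays exponentially as $|x|\to\infty$. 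In particular, $G_s\in L^r(\mathbb R^n)$ if and only if $r<\tfrac{n}{n-s}$, and $G_s$ is pointwise dominated by a constant multiple of the Riesz kernel $|x|^{s-n}$ after absorbing the exponential tail into $L^1$.

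For part (a), I first handle the endpoint $q=p$: the multiplier $\langle\xi\rangle^{-s}$ satisfies the hypothesis of Theorem \ref{teor2.1} (with $\delta=s$), so $\Lambda^{-s}$ is bounded on $L^p$ for every $1\leq p<\infty$, giving $\|f\|_{L^p}\lesssim\|f\|_{H^{s,p}}$. Next, for the other endpoint $q^*:=\tfrac{np}{n-sp}$ with $1<p<\infty$, I split $G_s=K_0+K_\infty$ where $K_0$ is the local piece dominated by $|x|^{s-n}\mathbf 1_{|x|\leq 1}$ and $K_\infty$ is exponentially decaying; the Hardy--Littlewood--Sobolev inequality gives $\|K_0\ast g\|_{L^{q^*}}\lesssim\|g\|_{L^p}$, while Young's inequality with $K_\infty\in L^1$ (in fact $K_\infty\in L^r$ for every $r$) handles the tail and yields even stronger bounds. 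Intermediate values $q\in(p,q^*)$ follow from log-convexity of the $L^q$ norm (interpolation between the two endpoints). For $p=1$, the Hardy--Littlewood--Sobolev endpoint fails and only weak-type $(1,\tfrac{n}{n-s})$ is available, which is precisely why we must exclude $q^*$ and instead use Young's inequality $\|G_s\ast g\|_{L^q}\leq\|G_s\|_{L^q}\|g\|_{L^1}$, which is finite exactly when $q<\tfrac{n}{n-s}$ by the kernel integrability criterion above.

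For part (b), we have $s=n/p$, so the kernel $G_s$ lies in $L^r$ for every $r<\tfrac{n}{n-s}=p'$. Given any $q\in[p,\infty)$, choose $r\in[1,p')$ via $1+\tfrac1q=\tfrac1r+\tfrac1p$, which is possible precisely because $q<\infty$ forces $\tfrac1r>\tfrac{1}{p'}$ and $q\ge p$ forces $r\ge 1$. Young's convolution inequality then gives
\begin{equation*}
\|f\|_{L^q}=\|G_s\ast g\|_{L^q}\leq\|G_s\|_{L^r}\|g\|_{L^p}\lesssim\|f\|_{H^{s,p}},
\end{equation*}
as required.

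The main obstacle is really just the kernel estimate for $G_s$ near the origin, which uses the integral representation of $G_s$ via a modified Bessel function of the third kind; this is standard and I would quote it from Stein's \emph{Singular Integrals}. Everything else reduces to bookkeeping between Hardy--Littlewood--Sobolev (for the sharp endpoint when $1<p$) and Young's inequality (for $p=1$ and for the critical case $sp=n$).
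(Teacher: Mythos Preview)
Your argument is correct and follows the standard route via Bessel potentials: write $f=G_s\ast g$ with $g=\Lambda^s f$, use the local behaviour $G_s(x)\sim c|x|^{s-n}$ together with exponential decay at infinity, and then invoke Hardy--Littlewood--Sobolev for the sharp exponent when $1<p<\infty$ and Young's inequality for $p=1$ and for the critical case $sp=n$. One small remark: when you appeal to Theorem~\ref{teor2.1} with $\delta=s>0$ to get the endpoint $q=p$, that theorem as stated only returns $\mathcal M_1$; to cover all $1\le p<\infty$ you should either note that the same hypothesis with $\delta=0$ also holds (giving $\mathcal M_p$ for $1<p<\infty$), or simply use $G_s\in L^1$ and Young directly, which is cleaner.

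As for comparison with the paper: the paper does not prove this statement at all---it merely cites \cite{MW-16} with the line ``For a detailed proof we refer to \cite{MW-16}.'' Your sketch therefore supplies strictly more than the paper does, and the approach you take is precisely the one underlying the cited reference (and Stein's treatment), so there is no divergence in method to discuss.
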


\begin{proof}
For a detailed proof we refer to \cite{MW-16}.
\end{proof}

We will also use the fractional Leibniz rule stated below. 

\begin{proposition} 
If $s\geq 0$, $1\leq p<\infty$, the for all $f, g \in \s(\R^n)$
\begin{equation}\label{eq8}
\|\Lambda^s(fg)\|_{L^p} \lesssim\|f\|_{L^{p_1}}\|g\|_{H^{s,q_1}}+\|f\|_{H^{s,p_2}}\|g\|_{L^{q_2}}
\end{equation}
with $p_1, p_2, q_1,q_2 \in (1, \infty)$ satisfying $\frac1{p}=\frac1{p_1}+ \frac1{q_1}=\frac1{p_2}+ \frac1{q_2}$.
\end{proposition}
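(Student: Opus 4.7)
The plan is to establish this Kato--Ponce type fractional Leibniz rule via a Littlewood--Paley/Bony paraproduct decomposition, treating separately the low-high, high-low, and diagonal frequency interactions. Fix a smooth radial bump $\varphi$ with $\widehat{\varphi}$ supported in an annulus and a standard Littlewood--Paley partition $\Delta_j = \varphi_j(D)$, $S_j = \sum_{k<j}\Delta_k$. For $f,g \in \mathcal{S}(\R^n)$ I would write Bony's decomposition
\begin{equation*}
fg \;=\; \Pi_1(f,g)+\Pi_2(f,g)+R(f,g),
\end{equation*}
where $\Pi_1(f,g)=\sum_{j}S_{j-3}f\cdot\Delta_j g$ (low frequency of $f$ multiplies high frequency of $g$), $\Pi_2(f,g)=\sum_{j}\Delta_j f\cdot S_{j-3}g$ (symmetric), and $R(f,g)=\sum_{|j-k|\le 2}\Delta_j f\cdot\Delta_k g$ (diagonal, or high-high). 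Since the symbol $(1+|\xi|^2)^{s/2}$ is a Mihlin multiplier, the characterization $\|h\|_{H^{s,p}}\sim \|(\sum_j 2^{2js}|\Delta_j h|^2)^{1/2}\|_{L^p}$ is valid for $1<p<\infty$, which I invoke through Theorem \ref{teor2.1}.

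For $\Pi_1(f,g)$, each summand has Fourier support in an annulus of size $\sim 2^j$, so applying $\Lambda^s$ amounts to weighting by $2^{js}$. The square function estimate together with $|S_{j-3}f(x)|\lesssim (Mf)(x)$ (the Hardy--Littlewood maximal function) and the Fefferman--Stein vector-valued maximal inequality in $L^{q_1}(\ell^2)$ yields
\begin{equation*}
\|\Lambda^s\Pi_1(f,g)\|_{L^p}\;\lesssim\; \Bigl\|\bigl(\textstyle\sum_j 2^{2js}|S_{j-3}f|^2|\Delta_j g|^2\bigr)^{1/2}\Bigr\|_{L^p}\;\lesssim\;\|Mf\|_{L^{p_1}}\|g\|_{H^{s,q_1}}\;\lesssim\;\|f\|_{L^{p_1}}\|g\|_{H^{s,q_1}},
\end{equation*}
after Hölder's inequality with $\tfrac1p=\tfrac1{p_1}+\tfrac1{q_1}$ and boundedness of $M$ on $L^{p_1}$, $p_1>1$. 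By symmetry, $\|\Lambda^s\Pi_2(f,g)\|_{L^p}\lesssim\|f\|_{H^{s,p_2}}\|g\|_{L^{q_2}}$.

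The remainder $R(f,g)$ is the delicate term because the product $\Delta_j f\cdot\Delta_k g$ (with $|j-k|\le 2$) has Fourier support in a ball of radius $\lesssim 2^j$ rather than in an annulus of size $2^j$. I would re-localize: $\Delta_\ell R(f,g)$ vanishes unless some $j\gtrsim\ell$, so I write $\Delta_\ell R(f,g)=\sum_{j\ge \ell-C}\Delta_\ell\bigl(\Delta_j f\,\widetilde\Delta_j g\bigr)$ with $\widetilde\Delta_j = \Delta_{j-1}+\Delta_j+\Delta_{j+1}$. Using the hypothesis $s\ge 0$, Minkowski, Bernstein's inequality, and splitting $2^{\ell s}=2^{(\ell-j)s}2^{js}$ (which is summable in $\ell\le j$ since $s\ge 0$; the $s=0$ case is trivial), I reduce matters to
\begin{equation*}
\Bigl\|\bigl(\textstyle\sum_j 2^{2js}|\Delta_j f\,\widetilde\Delta_j g|^2\bigr)^{1/2}\Bigr\|_{L^p}\;\lesssim\;\|f\|_{L^{p_1}}\|g\|_{H^{s,q_1}}+\|f\|_{H^{s,p_2}}\|g\|_{L^{q_2}},
\end{equation*}
again via $|\Delta_j h|\lesssim Mh$ and the Fefferman--Stein inequality, placing the $2^{js}$ weight on whichever of $f,g$ is convenient.

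The main obstacle is the endpoint $p=1$, since the square function characterization of $H^{s,p}$ and the Fefferman--Stein maximal inequality both fail at $p=1$. Because the hypothesis still forces $p_1,q_1>1$, I would handle this case by a duality/extrapolation argument: replace the $L^1$ square function step by an $H^1$-atomic estimate for $\Lambda^s \Pi_i$, or alternatively apply Marcinkiewicz interpolation between two admissible $(p_1,q_1)$ pairs with $p\in(1,\infty)$ to reach $p=1$ through a refined weak-type $(1,1)$ bound on the paraproducts (using that $\Pi_i$ is a Calderón--Zygmund bilinear operator and $M$ is weak $(1,1)$). The diagonal term at $p=1$ is the most technical point and is typically handled using the Coifman--Meyer theorem for the remainder viewed as a bilinear CZ kernel, together with the fact that $s\ge 0$ makes the weighted sum over $\ell\le j$ absolutely convergent.
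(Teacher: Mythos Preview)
The paper does not give an independent proof of this proposition: it simply cites Kenig--Ponce--Vega for $1<p<\infty$ and Grafakos--Oh for the endpoint $p=1$. So there is no ``paper's proof'' to compare against beyond those references.

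Your sketch for $1<p<\infty$ is the standard Bony paraproduct argument and is essentially what underlies the Kenig--Ponce--Vega proof; the treatment of $\Pi_1,\Pi_2$ and the diagonal piece $R$ via the square-function characterization, Fefferman--Stein, and the $\ell^1$-summability of $2^{(\ell-j)s}$ for $s>0$ is correct in outline. One small quibble: the Littlewood--Paley square-function characterization of $H^{s,p}$ is not a consequence of the Mihlin-type multiplier statement recorded as Theorem~\ref{teor2.1}; it is a separate (standard) fact you should invoke directly.

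The genuine gap is at $p=1$. Neither of your two suggested routes closes the argument. Interpolating between interior exponents $p\in(1,\infty)$ cannot produce the endpoint $p=1$; you would need an honest endpoint estimate to interpolate \emph{from}, not \emph{to}. And the classical Coifman--Meyer bilinear Calder\'on--Zygmund theory yields $L^{p_1}\times L^{q_1}\to L^p$ only for $p>1$ (or into $H^1$/weak-$L^1$ at the endpoint), whereas the statement demands the strong $L^1$ target. This is precisely why the $p=1$ case was settled only in Grafakos--Oh (2014), well after the $1<p<\infty$ case; their argument requires a careful analysis going beyond what you sketch. For the purposes of this paper it is enough to cite that reference, as the authors do, but if you intend to supply a self-contained proof you should either reproduce the Grafakos--Oh argument or restrict the statement to $1<p<\infty$.
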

\begin{proof}
The proof can be found in \cite{KPV-93} for $1<p<\infty$ and in \cite{Gra-14} for $p=1$.
\end{proof}


We state the following result from \cite{MW-16} which is obtained while analysing the well-posedness issue for the BBM equation in the $H^{s,p}$ spaces.
\begin{proposition}\label{propomega}
If $p\in [1,\infty)$ and $s\geq \max\{\frac1{p}-\frac12, 0\}$, then 
\begin{equation*}
\|\omega(\partial_x)(u_1 u_2)\|_{H^{s,p}(\R)}\lesssim \|u_1 \|_{H^{s,p}(\R)}\|u_2\|_{H^{s,p}(\R)}.
\end{equation*}
\end{proposition}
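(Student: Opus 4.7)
The plan is to leverage the $\langle\xi\rangle^{-1}$ decay of $\omega(\xi)$, which makes $\omega(\partial_x)$ a smoothing operator of order $-1$. Heuristically,
\[
\|\omega(\partial_x)(u_1u_2)\|_{H^{s,p}}\lesssim \|u_1u_2\|_{H^{\max(s-1,\,0),\,p}},
\]
and the right-hand side is to be controlled by $\|u_1\|_{H^{s,p}}\|u_2\|_{H^{s,p}}$ through the fractional Leibniz rule \eqref{eq8} together with the Sobolev embeddings from Theorem~\ref{teor2.2}; the threshold $s\geq\max\{1/p-1/2,0\}$ should arise precisely from matching H\"older, Sobolev, and (for $p\geq 2$) Hausdorff--Young exponents in one space dimension.

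First I would establish the smoothing property: for every $\sigma\in\R$ and $p\in[1,\infty)$, $\omega(\partial_x)$ maps $H^{\sigma,p}(\R)$ boundedly into $H^{\sigma+1,p}(\R)$. For $1<p<\infty$, Theorem~\ref{teor2.1} (with $\delta=0$) applies to the symbol $m(\xi)=\omega(\xi)(1+\xi^2)^{1/2}=|\xi|(1+\xi^2)^{-1/2}$, which is smooth on $\R\setminus\{0\}$ with $|D^{k}m(\xi)|\leq C_k\langle\xi\rangle^{-k}$, a direct computation. For $p=1$, Theorem~\ref{teor2.1} applies with $\delta=1$ directly to $\omega(\xi)$ itself, whose derivatives satisfy $|D^k\omega(\xi)|\leq C_k\langle\xi\rangle^{-1-k}$ away from the origin, so $\omega\in \mathcal{M}_1$ and (by a similar check of $\xi^k\omega(\xi)$) the desired $H^{\sigma,1}\to H^{\sigma+1,1}$ bound holds as well.

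With the smoothing in hand, I would split into regimes. For $s\geq 1$, the smoothing reduces matters to bounding $\|u_1u_2\|_{H^{s-1,p}}$, which I would handle via \eqref{eq8} combined with the Sobolev embedding $H^{s,p}\hookrightarrow L^\infty$ (valid on $\R$ when $sp>1$; the borderline $sp=1$ absorbed by a small perturbation of indices). For $\max\{1/p-1/2,\,0\}\leq s<1$, the reduction bounds the estimate by $\|u_1u_2\|_{L^p}$. When $1\leq p<2$, H\"older gives $\|u_1u_2\|_{L^p}\leq \|u_1\|_{L^{2p}}\|u_2\|_{L^{2p}}$, and Theorem~\ref{teor2.2} yields $H^{s,p}\hookrightarrow L^{2p}$ at its critical index, matching precisely the threshold $s\geq 1/p-1/2$. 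When $p\geq 2$, H\"older--Sobolev alone fails at $s=0$, so I would instead use a Fourier-side argument via Hausdorff--Young and Young's convolution inequality,
\[
\|\omega(\partial_x)(u_1u_2)\|_{L^p}\leq \|\omega\cdot \widehat{u_1u_2}\|_{L^{p'}}\lesssim \|\omega\|_{L^p}\|u_1\|_{L^p}\|u_2\|_{L^p},
\]
exploiting $\omega\in L^q(\R)$ for every $q>1$; the range $0<s<1$ in this regime is then handled by inserting the fractional Leibniz rule before passing to the Fourier side.

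The principal obstacle will be sharpness at the endpoints: the $p=1$ boundary, where the Hilbert transform implicit in the factorization $\omega(\partial_x)=H\partial_x(1-\partial_x^2)^{-1}$ is unbounded on $L^1$ and one must work at the multiplier level through the $\delta>0$ branch of Theorem~\ref{teor2.1}; and the critical Sobolev exponent $s=1/p-1/2$, where the embedding of Theorem~\ref{teor2.2}(a) is invoked at the edge of its range and the strict inequality in the case $p=1$ must be compensated by a Littlewood--Paley decomposition of $u_1u_2$ into low/high-frequency contributions. Beyond these bookkeeping issues, the proof is a routine combination of multiplier boundedness, fractional Leibniz, and Sobolev embedding.
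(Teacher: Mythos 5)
Before anything else, note that the paper does not prove Proposition \ref{propomega} at all: it is quoted verbatim from \cite{MW-16}, so there is no in-paper argument to compare against, and your proposal has to stand on its own. It does not, in the low-regularity regime, and the loss occurs exactly at the step where you replace $\|u_1u_2\|_{H^{s-1,p}}$ by $\|u_1u_2\|_{H^{\max(s-1,0),p}}=\|u_1u_2\|_{L^p}$ for $s<1$. For $1\le p<2$ you then apply H\"older into $L^{2p}\times L^{2p}$ and the critical Sobolev embedding; but $H^{s,p}(\R)\hookrightarrow L^{2p}(\R)$ requires $\frac1{2p}\ge\frac1p-s$, i.e. $s\ge\frac1{2p}$, and $\frac1{2p}>\frac1p-\frac12$ for every $p>1$ (they coincide only at $p=1$). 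So the claim that the critical index ``matches precisely'' the threshold $s\ge\frac1p-\frac12$ is an arithmetic slip, and the range $\frac1p-\frac12\le s<\frac1{2p}$ (e.g. $p=\frac32$, $s\in[\frac16,\frac13)$) is simply not covered. For $p>2$ the Fourier-side step is invalid: the chain needs $\|\widehat{u_i}\|_{L^{p'}}\lesssim\|u_i\|_{L^p}$, which is Hausdorff--Young in the forbidden direction; for $p>2$ the Fourier transform of an $L^p$ function need not even be a function, so that argument only works at $p=2$. Both failures have the same cure, which is to keep the $(1-s)$ derivatives of smoothing you discarded: estimate $\|\omega(\partial_x)(u_1u_2)\|_{H^{s,p}}\lesssim\|\Lambda^{s-1}(u_1u_2)\|_{L^p}\lesssim\|G_{1-s}\|_{L^m}\|u_1u_2\|_{L^r}$ by Young with the Bessel kernel, $1+\frac1p=\frac1m+\frac1r$, and put each factor in its critical Lebesgue space $L^q$, $\frac1q=\frac1p-s$, so that $\frac1r=\frac2p-2s$; the single constraint $r\ge1$ is exactly $s\ge\frac1p-\frac12$, and this route also handles $s=0$, $p\ge2$ with no Fourier-side detour.

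Two further points need repair even where the scheme is otherwise sound. First, the order-one smoothing on $L^1$ is not a consequence of Theorem \ref{teor2.1}: the relevant symbol is $\langle\xi\rangle\,\omega(\xi)\sim|\xi|(1+\xi^2)^{-1/2}$, which tends to $1$ at infinity, so only the $\delta=0$ branch ($1<p<\infty$) applies; ``a similar check of $\xi^k\omega(\xi)$'' does not produce a $\delta>0$. For $p=1$ you must argue separately, e.g. write $|\xi|(1+\xi^2)^{-1/2}=1-g(\xi)$ with $g$ decaying like $\xi^{-2}$, smooth away from a corner at the origin, and verify directly that $g$ has integrable inverse Fourier transform, so the multiplier is $\delta_0$ minus an $L^1$ kernel. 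Second, at the $p=1$ endpoint $s=\frac12$ the embedding $H^{1/2,1}\subset L^2$ is precisely the excluded endpoint of Theorem \ref{teor2.2}(a), so the parenthetical ``compensated by a Littlewood--Paley decomposition'' is currently a placeholder, not a proof; this endpoint is genuinely delicate and is where the argument of \cite{MW-16} does real work. In summary: your $s\ge1$ regime and the $p=2$ case are fine, but as written the proposal does not establish the proposition on its stated range $s\ge\max\{\frac1p-\frac12,0\}$ for $1<p<2$ or for $p>2$ with small $s$.
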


Now, we move on to derive multilinear estimates to deal with the well-posedness of the IVP \eqref{eq1.9} with given data in  $H^{s,p}$ spaces.

\begin{proposition}\label{proptau}
If $p\in [1,\infty)$ and $s\geq \max\{\frac1{p}-\frac12, 0\}$, then 
\begin{equation}\label{eq4.4m}
\|\tau(\partial_x)(u_1 u_2)\|_{H^{s,p}(\R)}\lesssim \|u_1 \|_{H^{s,p}(\R)}\|u_2\|_{H^{s,p}(\R)}.
\end{equation}
\end{proposition}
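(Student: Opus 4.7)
The plan is to reduce the estimate to Proposition \ref{propomega} by factoring
\[
\tau(\xi) \,=\, m(\xi)\,\omega(\xi), \qquad m(\xi) \,:=\, \frac{\tau(\xi)}{\omega(\xi)} \,=\, \frac{(3-4\gamma\xi^2)(1+\xi^2)}{4\,\varphi(\xi)}.
\]
Since $\gamma_1,\delta_1>0$, $\varphi$ stays strictly positive, so $m$ is a $C^\infty$ bounded rational function whose numerator and denominator both have degree four. Once $m\in\mathcal{M}_p(\R)$ is established, the operator $m(\partial_x)$ commutes with $\Lambda^s$ and is bounded on $L^p$, yielding
\[
\|\tau(\partial_x)(u_1u_2)\|_{H^{s,p}} = \|m(\partial_x)\omega(\partial_x)(u_1u_2)\|_{H^{s,p}} \lesssim \|\omega(\partial_x)(u_1u_2)\|_{H^{s,p}},
\]
and Proposition \ref{propomega} closes the estimate under the same hypothesis $s\geq \max\{\tfrac{1}{p}-\tfrac{1}{2},0\}$.

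So the only real work is verifying $m\in\mathcal{M}_p(\R)$ for each $p\in[1,\infty)$. For $1<p<\infty$ this is routine: a direct count of derivatives shows $|D^k m(\xi)|\lesssim\langle\xi\rangle^{-k}$ for every $k\geq 0$, so Theorem \ref{teor2.1} with $\delta=0$ applies. The endpoint $p=1$ is the subtle part, and I expect it to be the main obstacle: Theorem \ref{teor2.1} requires \emph{strict} decay ($\delta>0$), whereas $m(\xi)\to -\gamma/\delta_1$ as $|\xi|\to\infty$. To circumvent this, I will subtract off the limit at infinity and write $m(\xi)=c+m_1(\xi)$ with $c:=-\gamma/\delta_1$; the cancellation of the $\xi^4$ contributions in the numerator (by the very choice of $c$) forces $m_1$ to be a rational function with a degree-two numerator over a degree-four denominator, whence $|D^k m_1(\xi)|\lesssim\langle\xi\rangle^{-2-k}$ for all $k\geq 0$. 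Theorem \ref{teor2.1} with $\delta=2$ then gives $m_1\in\mathcal{M}_1(\R)$, and since the constant $c$ is trivially in $\mathcal{M}_1(\R)$, we conclude $m\in\mathcal{M}_1(\R)$.

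In summary, the proof hinges on the algebraic identity $\tau=m\,\omega$ together with the structural fact that the quotient $m$ is a bounded symbol with Mihlin-type derivative scaling; the only place where extra care is needed is the $p=1$ endpoint, handled by subtracting a single constant to restore strict decay at infinity.
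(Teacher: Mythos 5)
Your argument is essentially the paper's own proof: factor $\tau(\partial_x)=\frac{\tau(D_x)}{\omega(D_x)}\,\omega(D_x)$, show the quotient symbol is an $L^p$ multiplier commuting with $\Lambda^s$, and then apply Proposition \ref{propomega} to $h=\Lambda^s(u_1u_2)$; the only difference is that the paper simply asserts $\tau(\xi)/\omega(\xi)\in\mathcal{M}_q$ for all $q\in[1,\infty]$, while you attempt the verification, which is a welcome addition. One algebraic point needs care, and it is exactly at the endpoint you single out: since the paper defines $\omega(\xi)=|\xi|/(1+\xi^2)$, the true quotient is $\tau(\xi)/\omega(\xi)=\sgn(\xi)\,\frac{(3-4\gamma\xi^2)(1+\xi^2)}{4\varphi(\xi)}$, not the even rational function you wrote. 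For $1<p<\infty$ this is harmless (the $\sgn$ factor is the Hilbert transform symbol, bounded on $L^p$, and the even factor is Mihlin), but for $p=1$ the jump at $\xi=0$ cannot be repaired by subtracting a constant: elements of $\mathcal{M}_1$ are Fourier transforms of finite measures, hence continuous, so $\sgn(\xi)$ times a function that does not vanish at the origin is never in $\mathcal{M}_1$, and your decomposition $m=c+m_1$ does not exist for the corrected symbol. The clean fix is to factor instead through the odd symbol $\xi/(1+\xi^2)$ (the multiplier actually arising in the BBM setting of \cite{MW-16}, for which the bilinear estimate of Proposition \ref{propomega} is equally available); then the quotient is precisely your smooth rational $m$, and your derivative counts together with the subtraction of $-\gamma/\delta_1$ at infinity give $m\in\mathcal{M}_p$ for all $p\in[1,\infty]$ via Theorem \ref{teor2.1}. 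The paper's one-line proof glosses over the same $\sgn$ issue, so this is a refinement of, not a departure from, its argument.
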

\begin{proof}
First, note that the symbol $\dfrac{\tau(\xi)}{w(\xi)}$ of the  operator $\dfrac{\tau(D_x)}{\omega(D_x)}$  is in $\mathcal{M}_q$, $q\in [1, \infty]$. With this observation, one has
\begin{equation*}
\|\tau(\partial_x) h\|_{L^p}=\Big\|\dfrac{\tau(D_x)}{\omega(D_x)} \omega(D_x)h\Big\|_{L^p}\lesssim \|\omega(D_x)h\|_{L^p}.
\end{equation*}

Now, considering $h=\Lambda^s(u_1 u_2)$ and  applying the Proposition \ref{propomega} we conclude the proof.
\end{proof}

\begin{proposition}\label{propPsi}
If $p\in [1,\infty)$ and $s\geq \max\left\{ \frac1{p}+\frac12, 1 \right\}$, then 
\begin{equation}\label{eq4.6m}
\|\psi(\partial_x)(u_x v_x)\|_{H^{s,p}(\R)}\lesssim \|u \|_{H^{s,p}(\R)}\|v\|_{H^{s,p}(\R)}.
\end{equation}
\end{proposition}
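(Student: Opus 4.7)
The plan is to mirror the strategy behind Proposition \ref{propomega} and its modulation-space analogue \eqref{Sharp1}, by exploiting the pointwise symbol bound
\[
|\psi(\xi)| \;\lesssim\; \frac{\omega(\xi)}{\langle\xi\rangle},
\]
which holds because both sides behave like $|\xi|$ near $\xi=0$, while at infinity $|\psi(\xi)| \sim |\xi|^{-3}$ is dominated by $\omega(\xi)/\langle\xi\rangle \sim |\xi|^{-2}$. I would first promote this pointwise bound to the operator-level inequality
\[
\|\psi(\partial_x) h\|_{H^{s,p}(\R)} \;\lesssim\; \|\omega(\partial_x) h\|_{H^{s-1,p}(\R)},
\]
valid for any admissible $h$, and then apply it with $h = u_x v_x$, thereby reducing \eqref{eq4.6m} to a bilinear $\omega$-estimate at Sobolev level $s-1$.

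The reduction rests on checking that the symbol $m(\xi) := \langle\xi\rangle \psi(\xi)/\omega(\xi) = \mathrm{sgn}(\xi)\,(1+\xi^2)^{3/2}/\varphi(\xi)$ is a Fourier multiplier on $L^p(\R)$ for the range $p \in [1,\infty)$. The even factor $(1+\xi^2)^{3/2}/\varphi(\xi)$ is smooth, bounded, and decays like $|\xi|^{-1}$ at infinity, so Theorem \ref{teor2.1} handles it in all relevant $\mathcal{M}_p$-classes; the odd factor $\mathrm{sgn}(\xi)$ is (up to constants) the Hilbert transform, which lies in $\mathcal{M}_p$ for $1 < p < \infty$. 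I expect the genuine obstacle to be the endpoint $p=1$, where $\mathrm{sgn}$ fails to be an $L^1$-multiplier. To handle this case I would bypass the direct comparison with $\omega$ and instead use the splitting $u_x v_x = \partial_x(u v_x) - u v_{xx}$: the operator $\psi(\partial_x)\partial_x$, whose $\mathrm{sgn}$-free smooth symbol $i\xi^2/\varphi(\xi)$ decays like $|\xi|^{-2}$, belongs to $\mathcal{M}_p$ for every $p\in[1,\infty]$ by Theorem \ref{teor2.1} with $\delta = 2$. This reduces the first piece to an estimate for $\|u v_x\|_{H^{s,p}}$ controlled via the fractional Leibniz rule \eqref{eq8} combined with the Sobolev embedding Theorem \ref{teor2.2}, while an analogous Fourier-side manoeuvre handles the $\psi(\partial_x)(u v_{xx})$ contribution.

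Once the reduction is in hand, the hypothesis $s \geq \max\{\tfrac1p+\tfrac12,\,1\}$ translates precisely into $s-1 \geq \max\{\tfrac1p-\tfrac12,\,0\}$, which is exactly the range of validity of Proposition \ref{propomega}. Applying that proposition at Sobolev level $s-1$ to the pair $(u_x, v_x)$ yields
\[
\|\omega(\partial_x)(u_x v_x)\|_{H^{s-1,p}} \;\lesssim\; \|u_x\|_{H^{s-1,p}} \|v_x\|_{H^{s-1,p}}.
\]
Combining this with the reduction step of the previous paragraph, together with the routine bound $\|u_x\|_{H^{s-1,p}} \lesssim \|u\|_{H^{s,p}}$ (and its analogue for $v$, following from the Riesz-type multiplier $i\xi/\langle\xi\rangle$, with a mild refinement at the endpoint $p=1$) then delivers \eqref{eq4.6m}. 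As emphasized above, the truly delicate point throughout is finding the correct substitute for the naive multiplier argument at $p=1$; everything else is a clean composition of the bounds already established in the preceding material.
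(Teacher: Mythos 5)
For $1<p<\infty$ your argument is exactly the paper's: the paper introduces the operators $T_1=\psi(\partial_x)\Lambda/\omega(\partial_x)$ and $T_2=\Lambda^{s-1}\partial_x/\Lambda^{s}$, whose symbols are precisely your $\mathrm{sgn}(\xi)(1+\xi^2)^{3/2}/\varphi(\xi)$ and $i\xi(1+\xi^2)^{-1/2}$, and then runs the same chain $\|\psi(\partial_x)(u_xv_x)\|_{H^{s,p}}\lesssim\|\omega(\partial_x)(u_xv_x)\|_{H^{s-1,p}}\lesssim\|u_x\|_{H^{s-1,p}}\|v_x\|_{H^{s-1,p}}\lesssim\|u\|_{H^{s,p}}\|v\|_{H^{s,p}}$, using Proposition \ref{propomega} at level $s-1$, which is admissible exactly because $s\geq\max\{\frac1p+\frac12,1\}$ gives $s-1\geq\max\{\frac1p-\frac12,0\}$. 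Your observation that the $\mathrm{sgn}(\xi)$ factor obstructs the multiplier claim at $p=1$ is pertinent (the symbol has a jump at $\xi=0$, so it cannot be the Fourier transform of a finite measure); the paper simply asserts $m_1,m_2\in\mathcal{M}_q$ for $q\in[1,\infty]$ and does not treat $p=1$ separately, so on this point you are being more careful than the source.

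However, your substitute argument at $p=1$ has a genuine gap. First, reducing the piece $\psi(\partial_x)\partial_x(uv_x)$ to a bound on $\|uv_x\|_{H^{s,p}}$ discards the smoothing and cannot work as stated: that norm carries $s$ derivatives on $v_x$, i.e.\ $s+1$ on $v$, which is not controlled by $\|v\|_{H^{s,p}}$; and if you instead keep the gain (the symbol $\langle\xi\rangle^{\alpha}\xi^2/\varphi(\xi)$ lies in $\mathcal{M}_1$ by Theorem \ref{teor2.1} only for $\alpha<2$) and then apply \eqref{eq8} with Theorem \ref{teor2.2}, the unavoidable $L^1$-based embeddings cost an extra $\tfrac12+$ derivatives, so the endpoint $s=\tfrac32$ for $p=1$ is not reached. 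Second, and more seriously, the term $\psi(\partial_x)(uv_{xx})$ is not handled by any ``analogous Fourier-side manoeuvre'': for $p=1$ and $s$ as low as $\tfrac32$, $v_{xx}$ only lies in $H^{s-2,1}$ with negative order, the fractional Leibniz rule \eqref{eq8} requires nonnegative order, and the outer smoothing $\psi(\partial_x)$ cannot be moved inside the product to repair the unbalanced distribution of derivatives — doing so would require precisely the kind of bilinear (paraproduct) estimate that Proposition \ref{propomega} encapsulates and that your detour was meant to avoid. Indeed, integrating by parts destroys the favorable balanced configuration $u_xv_x$ (one derivative on each factor), which is what allows Proposition \ref{propomega} at level $s-1$ to absorb the nonlinearity; the correct fix at $p=1$ has to work with $u_xv_x$ directly (the same issue also affects your ``routine'' bound $\|u_x\|_{H^{s-1,p}}\lesssim\|u\|_{H^{s,p}}$ at $p=1$, since $i\xi\langle\xi\rangle^{-1}$ is likewise not an $L^1$-multiplier, so the ``mild refinement'' you allude to is not supplied). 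In short: your proof is complete and identical to the paper's for $1<p<\infty$, but the $p=1$ case, which you rightly isolate as the delicate point, is not actually proved.
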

\begin{proof}
To prove this lemma  we define Multiplier operators $T_1$
\begin{equation}\label{Prox1}
T_1:=\dfrac{\psi(\partial_x) \Lambda}{\omega(\partial_x)},
\end{equation}
with symbol $m_1(\xi)=\dfrac{\psi(\xi) (1+|\xi|^2)^{1/2}}{\omega(\xi)}$,
and $T_2$
\begin{equation}\label{Prox2}
T_2:=\dfrac{\Lambda^{s-1}\partial_x}{\Lambda^{s}},
\end{equation}
with symbol $m_2(\xi)=\dfrac{ (1+|\xi|^2)^{(s-1)/2} i\xi}{ (1+|\xi|^2)^{s/2}}$.

Note that $m_1$ and $m_2$ are Fourier multipliers on $L^q(\R)$, $1\leq q\leq \infty $, i.e., $m_1, m_2\in \mathcal{M}_q, \quad q\in [1, \infty]$.

Using \eqref{Prox1}, Proposition \ref{propomega} and \eqref{Prox2}, we have
\begin{equation*}
\begin{split}
\|\psi(\partial_x)(u_x v_x)\|_{H^{s,p}(\R)}&=\|\psi(\partial_x) \Lambda^s(u_x v_x)\|_{L^{p}}\\
&=\|\left(T_1 \dfrac{\omega(\partial_x)}{\Lambda}\right)\Lambda^s(u_x v_x)\|_{L^{p}}\\
&\lesssim\|\omega(\partial_x)\Lambda^{s-1}(u_x v_x)\|_{L^{p}}\\
&\lesssim\|\omega(\partial_x)(u_x v_x)\|_{H^{s-1,p}}\\
&\lesssim\|u_x\|_{H^{s-1,p}} \|v_x\|_{H^{s-1,p}}\\
&\lesssim\|u\|_{H^{s,p}} \|v\|_{H^{s,p}},
\end{split}
\end{equation*}
where we used the Proposition \ref{propomega} with $s-1\geq \max\left\{0, \frac1{p}-\frac12 \right\}$.
\end{proof}

\begin{proposition}\label{propPsi-1}
If $p\in [1,\infty)$ and $s\geq 1$, 
then 
\begin{equation}\label{eq4.10m}
\|\psi(\partial_x)(uvw)\|_{H^{s,p}(\R)}\lesssim \|u \|_{H^{s,p}(\R)}\|v\|_{H^{s,p}(\R)}\|w\|_{H^{s,p}(\R)}.
\end{equation}
\end{proposition}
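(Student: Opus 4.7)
The decisive structural fact is that $\psi(\partial_x)$ is a Fourier multiplier of order $-3$: since $\varphi(\xi)\geq 1+\delta_1\xi^4$, one has $|\psi(\xi)| \lesssim \langle\xi\rangle^{-3}$, with the analogous control on its derivatives. The strategy is to absorb the $\Lambda^s$ weight of the $H^{s,p}$-norm into this three-order smoothing, reducing the trilinear bound to Hölder's inequality combined with a Sobolev embedding. I would split according to whether $1\leq s\leq 3$ or $s>3$.

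In the first regime, the composite symbol $m_s(\xi):=(1+\xi^2)^{s/2}\psi(\xi)$ has order $s-3\leq 0$. For $1\leq s<3$ it decays at infinity, so Theorem \ref{teor2.1} (with $\delta=3-s>0$) places $m_s$ in $\mathcal{M}_p(\R)$ for every $p\in[1,\infty)$. The borderline $s=3$ with $p=1$ (where $m_3$ tends to the nonzero constant $1/\delta_1$ at $\pm\infty$) is not covered by a direct application, and I would handle it via the $T_1$-factorization trick from the proof of Proposition \ref{propPsi}: write $\Lambda^3\psi(\partial_x)= T\cdot\omega(\partial_x)\cdot\Lambda^2$ with $T$ of symbol order $-1$ and hence in $\mathcal{M}_1$, so the task reduces to bounding $\|\omega(\partial_x)(uvw)\|_{H^{2,1}}$; the bilinear Proposition \ref{propomega} applied to the pair $(u,vw)$, together with the algebra property of $H^{2,1}$ (valid because $sp=2>1$), closes that case. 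In all cases $1\leq s\leq 3$ one arrives at
\begin{equation*}
\|\psi(\partial_x)(uvw)\|_{H^{s,p}} \lesssim \|uvw\|_{L^p} \leq \|u\|_{L^{3p}}\|v\|_{L^{3p}}\|w\|_{L^{3p}} \lesssim \|u\|_{H^{s,p}}\|v\|_{H^{s,p}}\|w\|_{H^{s,p}},
\end{equation*}
where the last inequality uses the Sobolev embedding $H^{s,p}(\R)\hookrightarrow L^{3p}(\R)$ from Theorem \ref{teor2.2} — available because $sp\geq 1$ throughout the admissible range (part (b) when $sp=1$ and $H^{s,p}\hookrightarrow L^\infty$ when $sp>1$).

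For the regime $s>3$, I would peel off three orders of smoothing at the start: factor $\Lambda^s\psi(\partial_x)=M_0\cdot\Lambda^{s-3}$ with $M_0:=\Lambda^3\psi(\partial_x)$ bounded on $L^p$ (treated exactly as in the $s=3$ borderline above), so it suffices to bound $\|\Lambda^{s-3}(uvw)\|_{L^p}$. I would then distribute the $\Lambda^{s-3}$ derivatives via two applications of the fractional Leibniz rule \eqref{eq8} — first separating $u$ from $vw$, then $v$ from $w$ — producing a finite sum of terms of the form $\|\Lambda^{s-3}f\|_{L^{p_1}}\|g\|_{L^{p_2}}\|h\|_{L^{p_3}}$ with $\sum_i 1/p_i = 1/p$. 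Since $sp>3>1$ here, one has $H^{s,p}\hookrightarrow L^\infty$, so with a suitable choice of the Hölder exponents $p_i$ each term is controlled by $\|u\|_{H^{s,p}}\|v\|_{H^{s,p}}\|w\|_{H^{s,p}}$.

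\textbf{Main obstacle.} The genuine difficulty is the endpoint $p=1$, because Theorem \ref{teor2.1} delivers $\mathcal{M}_1$-boundedness only when the symbol actually decays at infinity. This forces the $T_1$-style factorization at $s=3$, $p=1$, and — through the peeled-off $M_0$ — also for $s>3$, $p=1$. Once this mechanism is in place, the remaining work is the bookkeeping of Hölder exponents in the Leibniz expansion and the verification of the corresponding Sobolev embeddings, both of which have considerable slack because $s-3<s$ and because $H^{s,p}$ embeds into $L^\infty$ throughout the regime $s>3$.
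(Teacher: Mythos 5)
Your first regime is fine and is in fact a cleaner route than the paper's for $1\leq s<3$: there the full symbol $\langle\xi\rangle^{s}\psi(\xi)=\langle\xi\rangle^{s}\xi/\varphi(\xi)$ is smooth everywhere and decays like $\langle\xi\rangle^{s-3}$, so Theorem \ref{teor2.1} does put it in $\mathcal{M}_1\subset\mathcal{M}_p$, and H\"older plus the embedding $H^{s,p}\hookrightarrow L^{3p}$ (valid since $sp\geq1$) closes the estimate for every $p\in[1,\infty)$. The paper instead splits by $p$ (cases $p\geq3$ and $1\leq p<3$) and uses the fractional Leibniz rule throughout; your argument avoids Leibniz entirely in this range.

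The genuine gap is the branch $s\geq 3$, $p=1$, i.e.\ exactly the ``main obstacle'' you identify, because the mechanism you propose to overcome it fails. The operator $T$ in your factorization $\Lambda^{3}\psi(\partial_x)=T\,\omega(\partial_x)\Lambda^{2}$ has symbol $\langle\xi\rangle\psi(\xi)/\omega(\xi)=\mathrm{sgn}(\xi)\,\langle\xi\rangle(1+\xi^{2})/\varphi(\xi)$: since $\psi(\xi)\sim\xi$ is odd while $\omega(\xi)\sim|\xi|$ near the origin, the quotient has a jump at $\xi=0$ with limits $\pm1$. An $L^{1}$-multiplier is the Fourier--Stieltjes transform of a finite measure and hence continuous, so $T\notin\mathcal{M}_1$ (this is precisely the Hilbert-transform obstruction; Theorem \ref{teor2.1} gives only $\mathcal{M}_p$, $1<p<\infty$, for such symbols). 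Nor can you instead take $M_0=\Lambda^{3}\psi(\partial_x)$ directly: its symbol tends to $+1/\delta_1$ at $+\infty$ and $-1/\delta_1$ at $-\infty$, and modulo an $\mathcal{M}_1$ correction it equals $\tfrac1{\delta_1}\tanh\xi$, whose inverse Fourier transform is a principal-value kernel of size $c/x$, not a finite measure; so $M_0$ is not bounded on $L^{1}$ either, and the ``peeled-off'' factor in your $s>3$ regime inherits the same defect at $p=1$. (For $1<p<\infty$ your $s\geq3$ sketch is salvageable by Mikhlin plus Leibniz bookkeeping.) The paper's proof is built to dodge exactly this: it never divides $\psi$ by $\omega$, but factors out $\partial_x\Lambda^{-4}$, whose compensating symbol $(1+\xi^{2})^{2}/(i\varphi(\xi))$ is smooth, even, and equal to a constant plus a decaying $\mathcal{M}_1$ part; the leftover derivatives are then spent on the product $\eta^{3}$ through the fractional Leibniz rule \eqref{eq8} and the Sobolev embeddings of Theorem \ref{teor2.2}, with the exponent bookkeeping done separately for $p\geq3$ and $1\leq p<3$. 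To repair your write-up you would have to replace the $T$-factorization at $p=1$ by an argument of this type that keeps the surviving multiplier free of sgn-type discontinuities at both the origin and infinity.
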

\begin{proof}
Without loss of generality, we can assume  that $u=v=w=\eta$, a similar proof works in the general case. Since the symbol $\frac{\psi(\xi)}{i\xi(1+|\xi|^2)^{-2}}$ of the operator  $\frac{\psi(\partial_x)}{\partial_x \Lambda^{-4}}$ is in $\mathcal{M}_q, \; q\in [1, \infty], $
to get \eqref{eq4.10m}, it is enough to prove 
\begin{equation}\label{estimp}
\|\partial_x \Lambda^{-4}(\eta^3)\|_{H^{s,p}(\R)}=\|\left(\partial_x \Lambda^{\frac2{p} -4}\right)\left(\Lambda^{-\frac2{p} }\right)(\eta^3)\|_{H^{s,p}(\R)} \lesssim \|\eta\|_{H^{s,p}(\R)}^3.
\end{equation}

 We divide the proof in two different cases.\\

\noindent
{\bf Case I. $\boxed{p\geq 3}$:} 
Note that for all $p\geq 3$ the symbol of the operator
$\partial_x \Lambda^{\frac2{p} -4}$ is in $ \mathcal{M}_q$, for  $q\in [1, \infty]$.
Also, for all $p\geq 3$, one has $p\in [\frac{p}3,\frac{\frac{p}3}{1-\frac23 }]$ and consequently using Theorem \ref{teor2.2} we have $\|f\|_{L^p}\leq \|f\|_{H^{\frac2{p}, \frac{p}3}}$. Therefore,
\begin{equation}\label{op2}
\Lambda^{-\frac2{p} }:  \,\, L^{\frac{p}3} \to  L^{p}.
\end{equation}

 Now,  using that  the symbol of the operator
$\partial_x \Lambda^{\frac2{p} -4}$ is in $ \mathcal{M}_q$, for  $q\in [1, \infty]$, \eqref{op2} and the fractional Leibniz rule \eqref{eq8}, we obtain
\begin{equation*}
\begin{split}
\|\left(\partial_x \Lambda^{\frac2{p} -4}\right)\left(\Lambda^{-\frac2{p} }\right)(\eta^3)\|_{H^{s,p}(\R)} &\lesssim \|\Lambda^{-\frac2{p} }\Lambda^{s }(\eta^3) \|_{L^p}\\
&\lesssim \|\Lambda^{s }(\eta^3) \|_{L^{\frac{p}3}}\\
&\lesssim \|\Lambda^{s }(\eta) \|_{L^{p}} \|\eta\|_{L^{p}}^2\\
&\lesssim \|\Lambda^{s }(\eta) \|_{L^{p}}^3.
\end{split}
\end{equation*}
where in the last inequality we used that $\Lambda^{-s }$ is an operator with symbol in $\mathcal{M}_p$.\\

\noindent
{\bf Case II.  $\boxed{1\leq p< 3}$:} 
In order to prove \eqref{estimp}, first note that, using Theorem \ref{teor2.1} one can easily infer that the operator $\partial_x \Lambda^{-3}: L^p\to L^p$ is bounded. Using this information, one has
\begin{equation}\label{estimp1}
\begin{split}
\|\partial_x \Lambda^{-4}(\eta^3)\|_{H^{s,p}(\R)}&=\|\left(\partial_x \Lambda^{-3}\right)\left(\Lambda^{\frac1{p}-1- }\right)\left(\Lambda^{s-\frac1{p} +}\right)(\eta^3) \|_{L^p}\\
&\lesssim\|\left(\Lambda^{\frac1{p}-1- }\right)\left(\Lambda^{s-\frac1{p} +}\right)(\eta^3) \|_{L^p}.
\end{split}
\end{equation}

Furthermore, for all $1\leq p< 3$, one has $\Lambda^{\frac1{p}-1- }: L^p \to L^1$. Indeed the case $p=1$ is obvious by Theorem \ref{teor2.1} and in the case $1<p<3$ using  Theorem \ref{teor2.2}, one obtains
\begin{equation}\label{estimp2}
\|f\|_{L^p}\lesssim \|f\|_{H^{s_p,1}} \Longleftrightarrow \|\Lambda^{\frac1{p}-1- }f\|_{L^p}\lesssim \|f\|_{L^1}, 
\end{equation}
where $s_p=-\frac1{p}+1+\epsilon$, $0<\epsilon\ll1$,  since $p\in [1, \frac{1}{1-s_p}]=[1, \frac{p}{1-\epsilon p}]$.

Now, combining \eqref{estimp1} and \eqref{estimp2},  and aplying the fractional Leibniz rule \eqref{eq8} twice, we get
\begin{equation*}
\begin{split}
\|\partial_x \Lambda^{-4}(\eta^3)\|_{H^{s,p}(\R)}
&\lesssim\|\Lambda^{s-\frac1{p} +}(\eta^3) \|_{L^1}\\
&\lesssim\|\Lambda^{s-\frac1{p} +}(\eta^2) \|_{L^{\frac32}}\|\eta\|_{L^{3}}+\|\Lambda^{s-\frac1{p} +}(\eta) \|_{L^{3}}\|\eta^2\|_{L^{\frac32}} \\
&\lesssim  \|\Lambda^{s-\frac1{p} +} \eta \|_{L^{3}}\|\eta\|_{L^{3}}^2. \\
\end{split}
\end{equation*}

In what follows, we will show that 
\begin{equation*}
 \|\Lambda^{s-\frac1{p} +}\eta \|_{L^{3}} \lesssim \|\Lambda^{s}\eta \|_{L^{p}}.
\end{equation*}
This inequality is equivalent to
\begin{equation}\label{estimp6}
  \|\eta \|_{L^{3}} \lesssim \|\Lambda^{S_p}\eta \|_{L^{p}},
\end{equation}
where $S_p=\frac1{p}-\epsilon$, $0<\epsilon \ll1$, which is true by Theorem \ref{teor2.2}, since $3\in [p, \frac{p}{1-pS_p})=[p, \frac1{\epsilon})$, for $1\leq p <3$.

Finally, to finish the proof we need to show that
\begin{equation*}
 \|\eta \|_{L^{3}} \lesssim \|\Lambda^{s}\eta \|_{L^{p}},
\end{equation*}
which is a consequence of \eqref{estimp6} and the fact that $\frac{\Lambda^{S_p}}{\Lambda^{s}}$ is a multiplier operator with symbol  in $\mathcal{M}_p$.
\end{proof}

In what follows we find an estimate for the unitary group operator $S(t)=e^{-i\phi(D)t}$  in the $H^{s,p}$ spaces. This estimate plays fundamental role while implementing contraction mapping principle.

\begin{remark}\label{rem-41}
Let $\mathcal{L}(H)$ be the set of the bounded  linear operators defined on a space $H$. If    $T$ is a multiplier operator in $L^p$, $ 1\leq p\leq \infty$, i.e.,  symbol of $T$ is Fourier multiplier in $ \mathcal{M}_p$ and $T$ commutes with $\Lambda^{s}$, then $T\in \mathcal{L}(H^{s,p})$.  In fact
$$
\|Tf\|_{H^{s,p}}=\|\Lambda^{s}T f\|_{L^p}=\|T\Lambda^{s} f\|_{L^p}\lesssim\|\Lambda^{s} f\|_{L^p}=\|f\|_{H^{s,p}}
$$
and
\begin{equation*}
\|T\|_{\mathcal{L}(L^p)}=\|T\|_{\mathcal{L}(H^{s,p})}
\end{equation*}
\end{remark}

\begin{lemma}\label{lema-ln-m}
Let $s\in \R$, $1\leq p\leq\infty$ and $S(t)$  be the unitary operator defined in \eqref{St}. Then for $t>0$, one has
\begin{equation}\label{lin-10}
\|S(t)\|_{\mathcal{L}(H^{s,p})}\lesssim \langle t\rangle^{2\left|\frac12-\frac1p\right|}.
\end{equation}

\end{lemma}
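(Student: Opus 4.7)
By Remark~\ref{rem-41}, since the Fourier multiplier $S(t)$ commutes with $\Lambda^s$, it suffices to establish $\|S(t)\|_{\mathcal{L}(L^p)}\lesssim \langle t\rangle^{2|1/2-1/p|}$ for every $1\leq p\leq\infty$. The symbol $e^{-it\phi(\xi)}$ has modulus one but its phase grows linearly at infinity (since $\phi(\xi)/\xi\to \alpha:=\delta_2/\delta_1$), so the first step is to peel off this transport part. Writing $\phi(\xi)=\alpha\xi+\tilde\phi(\xi)$, a direct computation using the definitions of $\phi$ and $\varphi$ gives
$$\tilde\phi(\xi)=\frac{\xi\bigl[(1-\alpha)-(\gamma_2+\alpha\gamma_1)\xi^2\bigr]}{1+\gamma_1\xi^2+\delta_1\xi^4},$$
whose numerator has degree $3$ and denominator degree $4$, yielding $|\tilde\phi^{(k)}(\xi)|\lesssim \langle\xi\rangle^{-1-k}$ for every $k\geq 0$. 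Consequently $S(t)=\mathcal{T}_{\alpha t}\circ T(t)$, where $\mathcal{T}_{\alpha t}$ is translation by $\alpha t$ (hence an isometry on every $L^p$) and $T(t)$ is the Fourier multiplier with symbol $m_t(\xi):=e^{-it\tilde\phi(\xi)}$. The problem then reduces to bounding $\|T(t)\|_{\mathcal{L}(L^p)}$.

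The $L^2$ endpoint is immediate: $|m_t|=1$ together with Plancherel gives $\|T(t)\|_{\mathcal{L}(L^2)}=1$. For the $L^1$ and $L^\infty$ endpoints the plan is to exhibit $T(t)$ as the identity plus convolution with a kernel whose $L^1$ norm grows at most linearly in $t$. Writing $m_t=1+(m_t-1)$, so that $T(t)f=f+K_t\ast f$ with $K_t:=\mathcal{F}^{-1}(m_t-1)$, Cauchy--Schwarz followed by Plancherel yields
$$\|K_t\|_{L^1}\lesssim \|\langle x\rangle K_t\|_{L^2}\lesssim \|m_t-1\|_{L^2}+\|\partial_\xi m_t\|_{L^2}.$$
Using $|e^{iy}-1|\leq \min(2,|y|)$ and $|\tilde\phi(\xi)|\lesssim \langle\xi\rangle^{-1}$, the first term is bounded by $\bigl(\int_{\R}\min(4,t^2\langle\xi\rangle^{-2})\,d\xi\bigr)^{1/2}\lesssim t^{1/2}$; for the second, $\|\partial_\xi m_t\|_{L^2}=t\|\tilde\phi'\|_{L^2}\lesssim t$ since $|\tilde\phi'(\xi)|\lesssim \langle\xi\rangle^{-2}$. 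Hence $\|K_t\|_{L^1}\lesssim \langle t\rangle$ and therefore $\|T(t)\|_{\mathcal{L}(L^p)}\leq 1+\|K_t\|_{L^1}\lesssim \langle t\rangle$ for both $p=1$ and $p=\infty$.

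Closing by interpolation: Riesz--Thorin between the $L^2\to L^2$ bound of $1$ and the $L^1\to L^1$ (respectively $L^\infty\to L^\infty$) bound of $\langle t\rangle$ delivers $\|T(t)\|_{\mathcal{L}(L^p)}\lesssim \langle t\rangle^{2|1/2-1/p|}$ for every $p\in[1,\infty]$, which combined with the isometry of $\mathcal{T}_{\alpha t}$ yields \eqref{lin-10}. The step I expect to be the main obstacle, and the reason the decomposition $\phi=\alpha\xi+\tilde\phi$ is essential, is precisely the $L^1$-kernel estimate: without first removing the linear phase, the symbol $e^{-it\phi(\xi)}$ does not produce a kernel of finite $L^1$ norm; only after this removal does the polynomial decay $|\tilde\phi^{(k)}|\lesssim \langle\xi\rangle^{-1-k}$ ensure that $m_t-1$ is controlled in $H^1_\xi$ by $\langle t\rangle$.
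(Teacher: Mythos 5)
Your proof is correct, and while its skeleton matches the paper's (trivial $L^2$ bound by Plancherel, an endpoint $L^1$/$L^\infty$ bound obtained by showing the symbol minus $1$ has an integrable inverse Fourier transform, Riesz--Thorin interpolation, and transfer to $H^{s,p}$ via Remark \ref{rem-41}), it differs in one substantive and advantageous way. The paper applies Bernstein's theorem directly to $e^{-i\phi(\xi)t}-1$, which forces the restriction $\delta_2=0$: only then does $\phi$ decay like $\langle\xi\rangle^{-1}$ so that the analogue of \eqref{eq4.24} holds, and the authors explicitly remark that for $\delta_2\neq 0$ this estimate fails and they could not treat $p\neq 2$ (hence the dichotomy in Theorem \ref{mainTh2}). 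You instead peel off the transport part of the phase, writing $\phi(\xi)=\alpha\xi+\tilde\phi(\xi)$ with $\alpha=\delta_2/\delta_1$, so that $S(t)$ factors as a translation (an isometry of every $L^p$ and $H^{s,p}$, commuting with $\Lambda^s$) composed with a multiplier whose phase $\tilde\phi$ is a rational function with numerator of degree $3$ and positive denominator of degree $4$, hence $|\tilde\phi^{(k)}(\xi)|\lesssim\langle\xi\rangle^{-1-k}$; your weighted Cauchy--Schwarz kernel bound $\|K_t\|_{L^1}\lesssim\|m_t-1\|_{L^2}+\|\partial_\xi m_t\|_{L^2}\lesssim\langle t\rangle$ is then essentially the same Bernstein-type mechanism the paper uses, just with a sum in place of the geometric mean. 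The net effect is that your argument proves the lemma as literally stated, for arbitrary $\delta_2$, whereas the paper's proof establishes the $p\neq 2$ endpoint only under $\delta_2=0$; since the multilinear estimates in Section \ref{sec-5} involve only $\gamma_1,\delta_1$ through $\varphi$, your version of the lemma would in fact remove the $\delta_2=0$ hypothesis from the $H^{s,p}$, $p\neq2$, part of Theorem \ref{mainTh2}, i.e., it addresses the point the authors flag as open.
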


\begin{proof}
Let $\delta\ne 0$. By Plancherel identity it is clear that 
\begin{equation}\label{liml2}
S(t) \in \mathcal{L}(L^2) \quad \textrm{and} \quad \|S(t)\|_{\mathcal{L}(L^2)}=1. 
\end{equation}

Let $\delta_2=0$. Then, applying the Bernstein's Theorem (see Lemma 2.1 in \cite{MH} and \cite{MW-16}), we obtain
\begin{equation}\label{eq4.21}
\begin{split}
\|(S(t)-I)f\|_{\mathcal{L}(L^1)}\lesssim \|e^{-i\phi(\xi)t}-1)\|_{L^2}^{\frac12}\,
\|\partial_{\xi}(e^{-i\phi(\xi)t}-1)\|_{L^2}^{\frac12}.
\end{split}
\end{equation}
Clearly
\begin{equation}\label{eq4.24}
|e^{-i\phi(\xi)t}-1|=\left|\int_0^t \dfrac{d }{d \tau}\left( e^{-i\phi(\xi)\tau} \right) d\tau\right|=\left|\phi(\xi)\,\int_0^t e^{-i\phi(\xi)\tau}\right|\lesssim \dfrac{|t|}{\langle \xi \rangle},
\end{equation}
holds for $\delta=0$.
Also, we have that
\begin{equation}\label{eq4.25}
  |\partial_{\xi}(e^{-i\phi(\xi)t}-1)|\lesssim \dfrac{|t|}{\langle \xi \rangle^2}.
\end{equation}
Consequently, in view of \eqref{eq4.24} and \eqref{eq4.25}, the estimate \eqref{eq4.21}  yields
\begin{equation}\label{est-ll2}
\|(S(t)-I)f\|_{\mathcal{L}(L^1)}\lesssim |t|.
\end{equation}
 
 From the estimate \eqref{est-ll2} one can infer that
 \begin{equation}\label{liml1}
\|S(t)\|_{\mathcal{L}(L^1)}\lesssim \langle t\rangle.
\end{equation}
By interpolation between \eqref{liml2} and \eqref{liml1} and using duality, we get
$$
\|S(t)\|_{\mathcal{L}(L^p)}\lesssim \langle t\rangle^{2\left|\frac12-\frac1p\right|}.
$$

Finally, using Remark \ref{rem-41}, we conclude that
$$
\|S(t)\|_{\mathcal{L}(H^{s,p})}\lesssim \langle t\rangle^{2\left|\frac12-\frac1p\right|},
$$
as required.
\end{proof}

Now, having the linear and  multilinear estimates at hand, we are in position to provide a proof for the second main result of this work stated in Theorem \ref{mainTh2}.

\begin{proof}[Proof of Theorem \ref{mainTh2}]
As in the proof of Theorem \ref{mainTh1}, here too, we consider the application $\Psi$ defined in \eqref{eq3.42} and use estimates \eqref{lin-10}, \eqref{eq4.4m}, \eqref{eq4.6m} and \eqref{eq4.10m} respectively from Lemma \ref{lema-ln-m}, Propositions \ref{proptau}, \ref{propPsi} and \ref{propPsi-1}, and prove that it is a contraction map on a ball in $H^{s,p}(\R)$ space. Rest of the proof follows with a standard argument. So we omit the details.
\end{proof}

\begin{remark}

For $\delta\ne 0$, the estimate \eqref{eq4.24} doesnot hold and consequently we could not prove the local well-posedness result fot eh IVP \eqref{5kdvbbm} in $H^{s,p}$, $p\ne 2$. So, this is an interesting open problem.

Once having obtained the local well-posedness results for given data in the modulation spaces $M^{s,p}$ and the $L^p$-based Sobolev spaces $H^{s,p}$, two questions arise naturally. The first one is, whether the regularity requirement in the initial data is optimal. The second question is, whether one can extend the local solution globally in time. In Section \ref{sec-4} we partially  responded  the second question  by proving the global well-posedness result in $M^{s,p}$ spaces.  We are working on other questions in our ongoing project and will be made public soon.
\end{remark}

\section{Solitary Wave solution}\label{sec-6}

In this section we will derive an explicit  solitary wave solution of \eqref{5kdvbbm}. First, recall that the IVP \eqref{5kdvbbm} possesses hamiltonian structure when $\gamma= \frac7{48}$. This restriction on $\gamma$  implies $\gamma_1=\gamma_2=\frac1{12}$.  

 For $c\in \R$, we look for the solution of the form $\eta(x,t)=\phi (x-ct)$ where $\phi \in \mathbb{S}(\R)$. Note that, if $\eta(x,t)=\phi (x-ct)$ is a solution of \eqref{5kdvbbm}, then we have
\begin{equation}\label{xeqx1}
(1-c)\phi'+\frac1{12}(c+1)\phi'''+(\delta_2-\delta_1c)\phi'''''+ (\frac34 \phi^2)'-\frac18 (\phi^3)'+\frac{7}{48}(\phi^2)'''-\frac{7}{48}(\,(\phi')^2)'=0.
\end{equation}

Taking $c=\frac{\delta_2}{\delta_1}$ and multiplying \eqref{xeqx1} by $48$, we get
\begin{equation}\label{eqx1}
48(1-c)\phi+4(c+1)\phi''+ 36 \phi^2-6 \phi^3+14\phi''\phi+7(\phi')^2=0.
\end{equation}

 Considering a solution of \eqref{eqx1} in the form 
 \begin{equation}\label{xeqx2}
 \phi(x)=A \operatorname{sech}^2 (Bx),
 \end{equation}
  we have
 \begin{equation}\label{xeqx23}
 \phi'(x)=-2AB \operatorname{sech}^2 (Bx)\tanh(Bx), \quad  \phi''(x)=4AB^2\operatorname{sech}^2 (Bx)-6AB^2\operatorname{sech}^4 (Bx)).
 \end{equation}
 
 Inserting \eqref{xeqx2} and \eqref{xeqx23} in \eqref{eqx1}, we obtain
\begin{equation}\label{Sol-5}
\begin{split}
16A \operatorname{sech}^2(Bx)&\left[ (1+c) B^2+3(1-c)\right]+12A\operatorname{sech}^4(Bx)\left[ -2B^2(1+c)+7AB^2+3A \right]\\
&-2A^2\operatorname{sech}^6(Bx)\left[ 56B^2+3A \right]=0.
\end{split}
\end{equation}
 
 The relation \eqref{Sol-5} is satisfied if
 \begin{equation}\label{Sol-6}
  \quad B^2=\frac{3(c-1)}{c+1}, \quad -2B^2(1+c)+7AB^2+3A=0, \quad A=-\frac{56}{3}B^2.
 \end{equation}

The system of equations in \eqref{Sol-6}, yields
 \begin{equation}\label{sol-c}
 c=-14\sqrt{66}-113=\frac{\delta_2}{\delta_1},  \quad A=-\frac{28}{7}(6+\sqrt{66}), \quad B^2=\frac{3}{14}(6+\sqrt{66}).
 \end{equation}
 
 Finally, using the constants from \eqref{sol-c} in \eqref{xeqx2} we obtain the following  explicit form of the solitary wave solution to the higher order water wave model \eqref{5kdvbbm} 
\begin{equation*}
\begin{split}
\eta(x,t)&=\phi\left(x+(14\sqrt{66}+113)t\right)\\
&= -\frac{28}{7}(6+\sqrt{66})\operatorname{sech}^2\left(\sqrt{\frac{3}{14}(6+\sqrt{66})}\,\left(x+(14\sqrt{66}+113)t\right)\right).
\end{split}
\end{equation*}

\begin{remark}
In our ongoing project, we are working on the stability issues of the solitary wave solution presented above. Also, we plan to use this explicit form to explore the ill-posedness results in the modulation spaces and $L^p$-based Sobolev spaces.
\end{remark}


\section*{Acknowledgments}
The first author is thankful to the Department of Mathematics, IMECC-UNICAMP for pleasant hospitality where part of this work was developed. The second author acknowledges  grants from FAPESP, Brazil (\# 2023/06416-6) and CNPq, Brazil (\# 307790/2020-7). 


\end{document}